\numberwithin{equation}{section}
\theoremstyle{plain}
   \newtheorem{theorem}{Theorem}[section]
   \newtheorem{proposition}[theorem]{Proposition}
   \newtheorem{lemma}[theorem]{Lemma}
   \newtheorem{corollary}[theorem]{Corollary}
   \newtheorem{problem}[theorem]{Problem}
\theoremstyle{definition}
   \newtheorem{definition}[theorem]{Definition}
   \newtheorem{example}[theorem]{Example}
\theoremstyle{remark}
   \newtheorem{remark}[theorem]{Remark}
\newcommand{\NN}{\mathbb{N}}
\newcommand{\one}{\hat{1}}
\newcommand{\zero}{\hat{0}}
\newcommand{\MM}{\mathrm{M}}
\newcommand{\HH}{\mathcal{H}}
\newcommand{\EE}{\mathcal{E}}
\newcommand{\FFF}{\mathbb{F}}
\newcommand{\TN}{\mathrm{TN}}
\newcommand{\ZZ}{\mathbb{Z}}
\newcommand{\RR}{\mathbb{R}}
\def\newop#1{\expandafter\def\csname #1\endcsname{\mathop{\rm
#1}\nolimits}}
\author[P.~Br\"and\'en]{Petter Br\"and\'en}
\address{Department of Mathematics, KTH Royal Institute of Technology,
Sweden}
\author[L.~Saud]{Leonardo Saud Maia Leite}
\title[TN-matrices, chain enumeration and zeros of polynomials]{Totally nonnegative matrices, chain enumeration and zeros of polynomials}
\begin{document}
\definecolor{ududff}{rgb}{0.30196078431372547,0.30196078431372547,1.}
\definecolor{xdxdff}{rgb}{0.49019607843137253,0.49019607843137253,1.}
\definecolor{red}{rgb}{1,0,0}
\definecolor{ffqqff}{rgb}{1,0,1}
\definecolor{zzttqq}{rgb}{0.6,0.2,0}
\definecolor{ududff}{rgb}{0.30196078431372547,0.30196078431372547,1}
\definecolor{cqcqcq}{rgb}{0.7529411764705882,0.7529411764705882,0.7529411764705882}
\definecolor{black}{rgb}{0,0,0}

\begin{abstract}
We prove that any {lower unitriangular} and totally nonnegative matrix gives rise to a family of polynomials with only real zeros. This has consequences {for} problems in several areas of mathematics. We use it  to develop a general theory for chain enumeration in posets and zeros of chain polynomials.  The results obtained extend and unify results of the first author, Brenti, Welker and Athanasiadis. In the process we define a notion of $h$-vectors  for a large class of posets which generalize the notions of $h$-vectors associated to simplicial and cubical complexes. 
%This answers a question of Alder on defining a notion of $h$-vectors for $q$-posets. 
A consequence of our methods is a characterization of the convex hull of all characteristic polynomials of hyperplane arrangements of fixed dimension and over a fixed finite field. This may be seen as a refinement of the Critical Problem of Crapo and Rota. 

We also use the methods developed to answer an open problem posed by Forg\'acs and Tran on the real-rootedness of polynomials arising from certain bivariate rational functions. 
\end{abstract}
\subjclass[2020]{06A07, 05E45, 15B48, 26C10}
\keywords{Totally nonnegative matrix, real-rooted polynomial, P\'olya frequency sequence, chain polynomial, barycentric subdivision, shellability, $q$-matroid, $r$-cubical poset, binomial poset, upper homogeneous poset, The Critical Problem}

%\thanks{PB is a Wallenberg Academy Scholar
  %supported by the Knut and Alice Wallenberg
  %Foundation, and the G\"oran Gustafsson foundation.}

\maketitle
\thispagestyle{empty}
\tableofcontents
%\newpage
\section{Introduction}
Chain enumeration in partially ordered sets (posets) is a central topic in enumerative and algebraic combinatorics \cite[Chapter 3]{stanley2011enumerative}. The zeros of polynomials associated to chain enumeration {have been} studied frequently in the literature. {Special} attention has been given to the problem of determining if chain polynomials of posets are real-rooted for various classes of posets. For distributive lattices, this problem is equivalent to the Poset conjecture (Neggers-Stanley conjecture) for natural labelings, which was stated by Neggers \cite{neggers1978representations} in the seventies and disproved by Stembridge in \cite{stembridge2007counterexamples}.
%, after the first author \cite{branden2004counterexamples} provided counterexamples for non-natural labelings. 
Brenti and Welker \cite{brenti2008f} proved that the chain polynomials of the face lattices of simplicial polytopes are real-rooted, and  conjectured that the same is true for any polytope. Athanasiadis \cite{athanasiadis2021face} proved Brenti and Welker's conjecture for cubical polytopes. More recently, Athanasiadis, Kalampogia-Evangelinou and Douvropoulos \cite{athanasiadis2023two,athanasiadis2022chain} proved that the chain polynomials of subspace lattices, partition lattices of types $A$ and $B$, the lattices of flats of near-pencils and uniform matroids, rank-selected subposets of Cohen-Macaulay simplicial posets and noncrossing partition lattices associated to finite Coxeter groups are real-rooted.

We approach the problem of real-rooted chain polynomials through total positivity.  We prove a general theorem, Theorem~\ref{maint}, stating that any lower triangular totally nonnegative matrix whose diagonal entries are all equal to one gives rise to a family of real-rooted polynomials. For the special case when the matrix is a lower triangular Toeplitz matrix, Theorem~\ref{maint} implies that any P\'olya frequency sequence gives rise to an infinite family of real-rooted polynomials, see Theorem~\ref{PFT}. This is used to answer an open problem posed by Forg\'acs and Tran  \cite[Problem 13]{forgacs2016polynomials} on the real-rootedness of polynomials arising from certain bivariate rational functions, see Theorem~\ref{FTT}. We also apply Theorem~\ref{PFT} to chain enumeration in upper homogeneous (upho) posets, a class of posets recently introduced by Stanley \cite{stanley2020upho, stanley2024theorems}. We prove that {certain}  chain polynomials associated to Schur-positive upho poset \cite{fu2024monoid} are real-rooted, Theorem~\ref{uphot}.

In Section \ref{sec-tn} we identify a class of posets that we call $\mathrm{TN}$-posets, to which Theorem~\ref{maint} applies to prove that such posets have real-rooted chain polynomials. The class of  $\mathrm{TN}$-posets is closed under rank selection. Hence we get for free that the chain polynomials associated {to rank selected subposets} of a $\mathrm{TN}$-poset are real-rooted. Examples of $\mathrm{TN}$-posets are Boolean algebras, lattices of subspaces of a finite dimensional vector space, $r$-cubical lattices and dual partition lattices. Given a $\mathrm{TN}$-poset $P$ we introduce, in Section \ref{section-subdivision}, a notion of $h$-vectors relative to $P$. For Boolean complexes this notion reduces to the usual definition of $h$-{vector}, and for cubical complexes our definition reduces to Adin's cubical $h$-vector \cite{adin1996new}. We prove in Theorem~\ref{rpossel} that if the rank generating polynomial of a  $P$-poset $Q$ (for example an order ideal of $P$) has nonnegative $h$-vector, then the chain polynomial of $Q$ is real-rooted. This is a vast generalization of a theorem of Brenti and Welker \cite{brenti2008f} who proved that if a Boolean cell complex $\Delta$ has nonnegative $h$-vector, then the $f$-polynomial of the barycentric subdivision of $\Delta$ is real-rooted. Equivalently, the chain polynomial of $\Delta$ is real-rooted.

In Section \ref{sec-critical} we apply the theory developed in previous sections to  $q$-posets, a $q$-analog of simplicial complexes first considered by Rota \cite{Rota71}, and more extensively studied by Alder \cite{alder2010q}. These are posets for which every principal order ideal is isomorphic to the lattice of subspaces of a finite dimensional vector space over $\mathbb{F}_q$. We introduce an $h$-vector for $q$-posets, based on the theory developed in Section~\ref{section-subdivision}, and thus answer an open problem posed by Alder \cite{alder2010q}. We prove that the rank generating polynomial of any shellable $q$-poset has a nonnegative $h$-vector. In the process we provide a characterization of the convex hull of all characteristic polynomials of hyperplane arrangements of fixed dimension over a fixed finite field, Corollary \ref{ch-cor}.  This may be seen as a refinement of the Pritical Problem of Crapo and Rota \cite{crapo1970foundations}. In Theorem~\ref{qmrr} we prove that the chain polynomial of any $q$-matroid is real-rooted.

In Section \ref{sec-r-cubical} we consider $r$-cubical posets. We introduce $h$-vectors for $r$-cubical posets which generalize the h-vectors for cubical complexes defined by Adin \cite{adin1996new}. We prove that C-shellable $r$-cubical posets have nonnegative $h$-vectors, and that the chain polynomial of any $r$-cubical poset with a nonnegative $h$-vector is real-rooted. This generalizes Athanasiadis' theorem for the case of cubical complexes \cite{athanasiadis2021face}. 

In Section \ref{sec-part} we study the infinite dual partition lattice. We introduce notions of $h$-vectors and shellability for so called partition posets, and prove that chain polynomials of shellable partition posets are real-rooted. 

In a sequel \cite{BL2} to this paper we apply the theory developed here to chain enumeration in geometric lattices. For example we prove that the chain polynomials of Dowling lattices and the lattices of flats of paving matroids are real-rooted.

\section{Totally nonnegative and resolvable matrices}
\label{sec-tnm}
A {\emph{lower unitriangular matrix}} is a lower triangular matrix whose diagonal entries are all equal to one. In this section we will prove that for lower unitriangular matrix, total nonnegativity is equivalent to the resolvability of the row generating polynomials of $R$. This will enable us to  prove in Section \ref{sec-interlacing} that such matrices give rise to a family of real-rooted polynomials.

For $N \in \NN\cup\{\infty\}$, let $\Gamma_N$ be the directed graph on $\{ (i,j) \in \NN^2: j\leq i \leq N\}$ with edges 
$$
(i,j) \to (i,j+1) \ \ \ \mbox{ and } \ \ \ (i+1,j) \to (i,j), 
$$
and let $\lambda=(\lambda_{i,j})_{0\leq j \leq i < N}$ be an array of nonnegative numbers.   
\begin{figure}
\centering
\begin{tikzpicture}[line cap=round,line join=round,>=triangle 45,x=1cm,y=1cm,scale=1.4]
    \draw [color=cqcqcq,, xstep=0.5cm,ystep=0.5cm];
    \clip(-1.1480869706791401,-6.160766230552031) rectangle (7,0.7001462190018983);
    \draw [->,line width=0.3pt] (0,0) -- (5.5,0);
    \draw [->,line width=0.3pt] (0,0) -- (0,-5.5);
    \draw [line width=0.3pt] (0,-1) -- (1,-1);
    \draw [line width=0.3pt] (0,-2) -- (2,-2);
    \draw [line width=0.3pt] (0,-3) -- (3,-3);
    \draw [line width=0.3pt] (0,-4) -- (4,-4);
    \draw [line width=0.3pt] (0,-5) -- (5,-5);
    \draw [line width=0.3pt] (1,-5) -- (1,-1);
    \draw [line width=0.3pt] (2,-5) -- (2,-2);
    \draw [line width=0.3pt] (3,-5) -- (3,-3);
    \draw [line width=0.3pt] (4,-5) -- (4,-4);
    \draw [line width=0.3pt,domain=0:5.5] plot(\x,{(-0-2*\x)/2});
    \draw [->,-latex] (0,-1) -- (0,-0.4);
    \draw [->,-latex] (0,-1) -- (0.6,-1);
    \draw [->,-latex] (0,-2) -- (0,-1.4);
    \draw [->,-latex] (0,-2) -- (0.6,-2);
    \draw [->,-latex] (1,-2) -- (1,-1.4);
    \draw [->,-latex] (1,-2) -- (1.6,-2);
    \draw [->,-latex] (0,-3) -- (0,-2.4);
    \draw [->,-latex] (0,-3) -- (0.6,-3);
    \draw [->,-latex] (1,-3) -- (1,-2.4);
    \draw [->,-latex] (1,-3) -- (1.6,-3);
    \draw [->,-latex] (2,-3) -- (2,-2.4);
    \draw [->,-latex] (2,-3) -- (2.6,-3);
    \draw [->,-latex] (0,-4) -- (0,-3.4);
    \draw [->,-latex] (0,-5) -- (0,-4.4);
    \draw [->,-latex] (0,-4) -- (0.6,-4);
    \draw [->,-latex] (0,-5) -- (0.6,-5);
    \draw [->,-latex] (1,-4) -- (1,-3.5);
    \draw [->,-latex] (1,-4) -- (1.6,-4);
    \draw [->,-latex] (2,-4) -- (2,-3.4);
    \draw [->,-latex] (2,-4) -- (2.6,-4);
    \draw [->,-latex] (3,-4) -- (3,-3.4);
    \draw [->,-latex] (3,-4) -- (3.6,-4);
    \draw [->,-latex] (1,-5) -- (1,-4.4);
    \draw [->,-latex] (1,-5) -- (1.6,-5);
    \draw [->,-latex] (2,-5) -- (2,-4.4);
    \draw [->,-latex] (2,-5) -- (2.6,-5);
    \draw [->,-latex] (3,-5) -- (3,-4.4);
    \draw [->,-latex] (3,-5) -- (3.6,-5);
    \draw [->,-latex] (4,-5) -- (4,-4.4);
    \draw [->,-latex] (4,-5) -- (4.6,-5);
    \draw (-0.05404958007457205,0.42200111969565796) node[anchor=north west] {0};
    \draw (0.9658157840483304,-0.4773347013945192) node[anchor=north west] {1};
    \draw (1.9671381415508162,-1.4508425489663608) node[anchor=north west] {2};
    \draw (2.9777320023635103,-2.350178370056538) node[anchor=north west] {3};
    \draw (3.96051135324558,-3.4164012507304595) node[anchor=north west] {4};
    \draw (4.9525622074378575,-4.389909098302301) node[anchor=north west] {5};
    \draw (-0.4434527191033166,0.11604151045879355) node[anchor=north west] {0};
    \draw (-0.4805387323441494,-0.9038238536640878) node[anchor=north west] {1};
    \draw (-0.4712672290339412,-1.9236892177869693) node[anchor=north west] {2};
    \draw (-0.4527242224135248,-2.925011575289435) node[anchor=north west] {3};
    \draw (-0.4527242224135248,-3.898519422861276) node[anchor=north west] {4};
    \draw (-0.4712672290339412,-4.918384786984157) node[anchor=north west] {5};
    \draw (-0.5,-0.3) node[anchor=north west] {$\lambda_{00}$};
    \draw (-0.5,-1.3) node[anchor=north west] {$\lambda_{10}$};
    \draw (-0.5,-2.3) node[anchor=north west] {$\lambda_{20}$};
    \draw (-0.5,-3.3) node[anchor=north west] {$\lambda_{30}$};
    \draw (-0.5,-4.3) node[anchor=north west] {$\lambda_{40}$};
    \draw (0.5,-1.3) node[anchor=north west] {$\lambda_{11}$};
    \draw (0.5,-2.3) node[anchor=north west] {$\lambda_{21}$};
    \draw (0.5,-3.3) node[anchor=north west] {$\lambda_{31}$};
    \draw (0.5,-4.3) node[anchor=north west] {$\lambda_{41}$};
    \draw (1.5,-4.3) node[anchor=north west] {$\lambda_{42}$};
    \draw (2.5,-4.3) node[anchor=north west] {$\lambda_{43}$};
    \draw (3.5,-4.3) node[anchor=north west] {$\lambda_{44}$};
    \draw (1.5,-3.3) node[anchor=north west] {$\lambda_{32}$};
    \draw (2.5,-3.3) node[anchor=north west] {$\lambda_{33}$};
    \draw (1.5,-2.3) node[anchor=north west] {$\lambda_{22}$};
    \draw (5.6479249557034725,0.2365710534914977) node[anchor=north west] {\Large $j$};
    \draw (-0.07259258669498846,-5.446860475666013) node[anchor=north west] {\Large $i$};
    \begin{scriptsize}
        \draw [fill=black] (0,0) circle (1.0pt);
        \draw [fill=black] (2,-2) circle (1.0pt);
        \draw [fill=black] (0,-0.9888560254288775) circle (1.0pt);
        \draw [fill=black] (0,-2.0005086637613667) circle (1.0pt);
        \draw [fill=black] (1,-2) circle (1.0pt);
        \draw [fill=black] (0,-3) circle (1.0pt);
        \draw [fill=black] (1,-3) circle (1.0pt);
        \draw [fill=black] (2,-3) circle (1.0pt);
        \draw [fill=black] (0,-4) circle (1.0pt);
        \draw [fill=black] (0,-5) circle (1.0pt);
        \draw [fill=black] (1,-4) circle (1.0pt);
        \draw [fill=black] (2,-4) circle (1.0pt);
        \draw [fill=black] (3,-4) circle (1.0pt);
        \draw [fill=black] (1,-5) circle (1.0pt);
        \draw [fill=black] (2,-5) circle (1.0pt);
        \draw [fill=black] (3,-5) circle (1.0pt);
        \draw [fill=black] (4,-5) circle (1.0pt);
        \draw [fill=black] (1,-1) circle (1.0pt);
        \draw [fill=black] (3,-3) circle (1.0pt);
        \draw [fill=black] (4,-4) circle (1.0pt);
        \draw [fill=black] (5,-5) circle (1.0pt);
    \end{scriptsize}
\end{tikzpicture}
\caption{The directed graph $\Gamma_N$ with its weights.}
\label{fig:Gamma}
\end{figure}
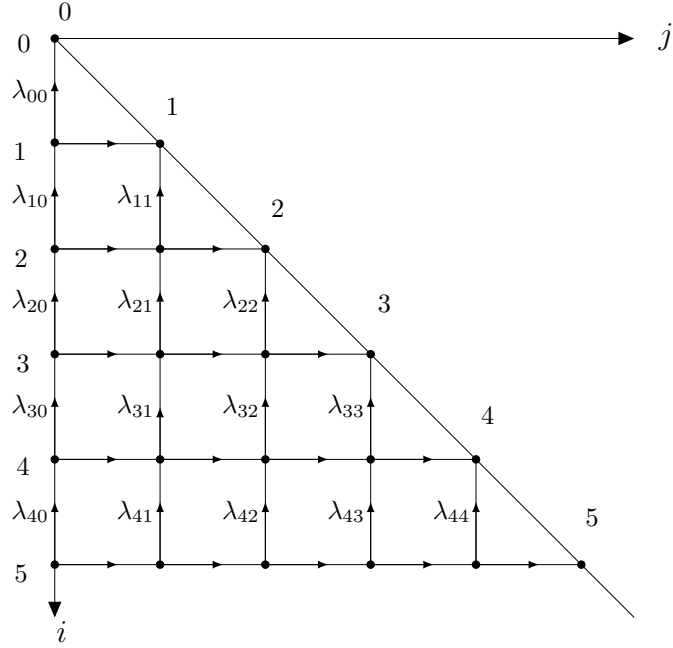

Attach the weight $\lambda_{i,j}$ to the vertical edge $(i+1,j) \to (i,j)$, and attach the weight $1$ to each horizontal edge, see Fig. \ref{fig:Gamma}. Let further $r_{n,k}(\Gamma_N, \lambda)$ be the weighted sum of all paths from $(n,0)$ to $(k,k)$, where the weight of a path is the product of the weights of the edges used in the path. Recall that a matrix with real entries is \emph{totally nonnegative} (TN) if all of its minors are nonnegative. By the Lindstr\"om-Gessel-Viennot Lemma~\cite[Thm. 7.16.1]{stanley2011enumerative}, the matrix $R(\Gamma_N, \lambda)= (r_{n,k}(\Gamma_N, \lambda))_{n,k=0}^N$ is totally nonnegative. By Whitney's reduction Theorem~\cite{whitney1952reduction}, the converse is also true. %We sketch a proof of this below for completeness.

\begin{theorem}\label{WL}
Let $R=(r_{n,k})_{n,k=0}^N$, $N \in \NN\cup\{\infty\}$, be a lower {unitriangular matrix}. Then $R$ is totally nonnegative if and only if there is an array of nonnegative numbers $\lambda=(\lambda_{i,j})_{0\leq j \leq i < N}$ such that $R= R(\Gamma_N, \lambda)$. Moreover we may choose $\lambda$ so that 
\begin{equation}\label{limp}
\lambda_{n,k}=0 \ \ \ \mbox{ implies }  \ \ \ \lambda_{n+1,k}=0, 
\end{equation}
for all $0\leq k \leq n <N-1$, and then $\lambda$ is unique.
\end{theorem}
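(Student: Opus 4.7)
The forward direction is immediate from the cited Lindström–Gessel–Viennot lemma: $R(\Gamma_N, \lambda)$ is the path matrix of a planar acyclic directed network with nonnegative weights, so every minor counts weighted non-intersecting path systems and is nonnegative. The plan is therefore to prove the converse and uniqueness simultaneously by induction on $N$; the base $N = 0$ is vacuous.

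For the inductive step, suppose $R = (r_{n, k})_{n, k = 0}^{N}$ is TN, lower triangular with unit diagonal. The first column of $R$ forces the weights $\lambda_{i, 0}$ uniquely: the only directed path from $(n, 0)$ to $(0, 0)$ in $\Gamma_N$ is the straight vertical descent, so in any realization $R = R(\Gamma_N, \lambda)$ one must have $r_{n, 0} = \lambda_{0, 0} \lambda_{1, 0} \cdots \lambda_{n - 1, 0}$. Combined with (\ref{limp}), this dictates $\lambda_{n - 1, 0} := r_{n, 0}/r_{n - 1, 0}$ when $r_{n - 1, 0} > 0$ and $\lambda_{n - 1, 0} := 0$ otherwise. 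Consistency is ensured by observing that if $r_{n - 1, 0} = 0$, the $2 \times 2$ minor of $R$ at rows $\{n - 1, m\}$ and columns $\{0, n - 1\}$ equals $-r_{m, 0}$ (using $r_{n - 1, n - 1} = 1$), so TN forces $r_{m, 0} = 0$ for all $m \geq n - 1$.

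Next, assemble $A_0$ with $(A_0)_{n, k} = \prod_{l = k}^{n - 1} \lambda_{l, 0}$. This matrix is TN by LGV applied to the single-column subnetwork, and its inverse is the lower bidiagonal matrix $I - \sum_{i \geq 1} \lambda_{i - 1, 0}\, E_{i, i - 1}$. Let $R^{(1)} := A_0^{-1} R$. The choice of $\lambda_{\cdot, 0}$ forces the first column of $R^{(1)}$ to be $e_0$; since $R^{(1)}$ is still lower triangular with unit diagonal, we get the block decomposition $R^{(1)} = \begin{pmatrix} 1 & 0 \\ 0 & R^{*} \end{pmatrix}$, with $R^{*}$ lower unit-triangular of size $N$. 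The main technical obstacle is to show that $R^{*}$ is TN. Applying Cauchy–Binet to $R = A_0 R^{(1)}$ yields, for any equal-size $I, J$,
\begin{equation*}
R[I, J] = \sum_{K} A_0[I, K] \cdot R^{(1)}[K, J].
\end{equation*}
The minors $A_0[I, K]$ are explicit nonnegative products of $\lambda_{\cdot, 0}$ that vanish unless $K$ interlaces $I$ in the sense $k_1 \leq i_1 < k_2 \leq i_2 < \cdots$; they form a triangular system in the lex order on equal-size subsets, with $A_0[I, I] = 1$. Inverting this system for $I, J \subseteq \{1, \ldots, N\}$ and exploiting the vanishing of column $0$ of $R^{(1)}$, each minor $R^{*}[I, J]$ is identified, via a Laplace expansion along column $0$ of a suitable minor of $R$, with a specific minor of $R$ (of the form $R[\widetilde{I}, \{0\} \cup \widetilde{J}]$) divided by a positive entry from the first column of $R$. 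Nonnegativity of all TN minors of $R$ then transfers to $R^{*}$; degenerate cases $r_{i, 0} = 0$ are absorbed by (\ref{limp}).

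Finally, apply the inductive hypothesis to $R^{*}$ to obtain a unique nonnegative array $\mu = \{\mu_{i, j}\}_{0 \leq j \leq i < N - 1}$ subject to the analog of (\ref{limp}) and $R^{*} = R(\Gamma_{N - 1}, \mu)$. Setting $\lambda_{i + 1, j + 1} := \mu_{i, j}$ completes the construction of $\lambda$. The path decomposition in $\Gamma_N$ — every path from $(n, 0)$ splits uniquely into its column-$0$ segment (accounted for by $A_0$) and its remainder (accounted for by $R^{(1)}$) — together with $R = A_0 R^{(1)}$ yields $R = R(\Gamma_N, \lambda)$. Uniqueness subject to (\ref{limp}) is inherited from the uniqueness at each step of the construction.
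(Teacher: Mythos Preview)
Your overall architecture coincides with the paper's: both arguments induct on $N$, read off the weights $\lambda_{\cdot,0}$ from the first column of $R$, and pass to the smaller matrix obtained by the bidiagonal row operations $r_{n}\mapsto r_n-\lambda_{n-1,0}r_{n-1}$ and then deleting row/column~$0$. Your $R^{*}$ is exactly the paper's $\tilde R$ (compare $R^{*}_{n,k}=R^{(1)}_{n+1,k+1}=r_{n+1,k+1}-\lambda_{n,0}\,r_{n,k+1}=\tilde r_{n,k}$). The only substantive difference is that the paper invokes Whitney's reduction theorem to conclude that $\tilde R$ is TN, whereas you try to prove this step from scratch via Cauchy--Binet.

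That step contains a genuine gap. Your claim that ``each minor $R^{*}[I,J]$ is \ldots\ a specific minor of $R$ \ldots\ divided by a positive entry from the first column of $R$'' is correct only when the row set is an interval: if $I'=\{i,i+1,\ldots,i+s-1\}$ and $r_{i-1,0}>0$, then row-reducing $R[\{i-1\}\cup I',\{0\}\cup J']$ gives $r_{i-1,0}\cdot R^{(1)}[I',J']$. For non-consecutive $I'$ the identity fails. Concretely, with a generic $5\times 5$ lower unit triangular $R$ one computes
\[
ad\cdot R^{(1)}[\{2,4\},\{1,2\}]
= (ac-b)(di-gf)-a(dh-ge),
\]
which is \emph{not} of the form $r_{\ell,0}\cdot R[\widetilde I,\{0\}\cup\widetilde J]$ (the right-hand side, being a single minor times an entry, would have to be divisible by $a$ or by $d$ as a polynomial in the entries, and it is neither). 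Inverting the Cauchy--Binet system uses the compound of $A_0^{-1}$, whose minors carry signs, so the inversion is an \emph{alternating} sum of minors of $R$, not a single one. The fix is either to cite Whitney's reduction theorem (equivalently, the Gasca--Pe\~na theory of Neville elimination for TN matrices), as the paper does, or to keep your ratio identity for \emph{consecutive} row sets and combine it with a density argument: perturb $R$ to a totally positive matrix, use Fekete's criterion (for TP it suffices that minors on consecutive rows and columns be positive), deduce that the perturbed $R^{*}$ is TP, and let the perturbation tend to zero.
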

\begin{proof}
It suffices to prove the theorem for finite $N$. One direction is the Lindstr\"om-Gessel-Viennot lemma. 

We prove the existence of $\Gamma_N$, $N \in \NN$, and $\lambda$ by induction on $N$. Let $R=(r_{n,k})_{n,k=0}^N$ be a lower {unitriangular matrix}, and let $m+1$ be the {smallest} index for which $r_{m+1,0}=0$. Whitney's reduction theorem says that $R$ is $\mathrm{TN}$ if and only if $r_{j,0}=0$ for each $j>m$, and the matrix $\tilde{R}= (\tilde{r}_{n,k})_{n,k=0}^{N-1}$, where $\tilde{r}_{n,k}= r_{n+1,k+1}- \mu_n r_{n,k+1}$ and 
$$
\mu_n= \begin{cases}
r_{n+1,0}/r_{n,0} &\mbox{ if } n \leq m \\
0 &\mbox{ otherwise}
\end{cases}\,,
$$
is $\mathrm{TN}$. 

If $\lambda$ satisfies \eqref{limp}, then 
$$
r_{n+1,k+1}(\Gamma_N, \lambda)= \lambda_{n,0} r_{n,k+1}(\Gamma_N, \lambda)+ r_{n,k}(\Gamma_{N-1}, \tilde{\lambda}), 
$$
where $\tilde{\lambda}_{n,k}=\lambda_{n+1,k+1}$, and
$$
\lambda_{n,0} = \begin{cases}
    r_{n+1,0}(\Gamma_N, \lambda)/r_{n,0}(\Gamma_N, \lambda) &\mbox{ if } n \leq m \\
    0 &\mbox{ otherwise}
\end{cases}.
$$

Suppose $R$ is $\mathrm{TN}$. By induction there exist $\gamma$ satisfying \eqref{limp} such that $\tilde{r}_{n,k}= r_{n,k}(\Gamma_{N-1}, \gamma)$. But then 
$r_{n,k}= r_{n,k}(\Gamma_{N}, \lambda)$, where $\lambda$ is obtained from $\gamma$ by adding a first column equal to $\mu$ to the left of $\gamma$. 

The uniqueness of $\lambda$ follows by induction.
\end{proof}

\begin{definition}
\label{resolv}
Let $R=(r_{n,k})_{n,k=0}^N$, $N \in \NN \cup\{\infty\}$, be a lower {unitriangular matrix}, and let $R_n(t) = \sum_{k=0}^n r_{n,k} t^k$ be the generating polynomial of the $n$th row. 
The matrix $R$ (and the sequence $\{R_n(t)\}_{n=0}^N$) is called \emph{resolvable} if there is a matrix $(\lambda_{n,k})_{0\leq k\leq n <N}$ of nonnegative numbers, and an array of monic polynomials $(R_{n,k}(t))_{0\leq k \leq n \leq N}$ such that 
\begin{itemize}
\item $R_{n,0}(t)=R_n(t)$ and $R_{n,n}(t) =t^n$ for all $0\leq n \leq N$,
\item $t^k$ divides  $R_{n,k}(t)$ for all $0 \leq k \leq n \leq N$, and 
\item if $0\leq k \leq n <N$, then 
\begin{equation}\label{r-pasc}
R_{n+1,k}(t)=R_{n+1,k+1}(t)+ \lambda_{n,k} R_{n,k}(t). 
\end{equation}
\end{itemize}
%If all $\lambda_{n,k}$ are positive, then we say that $R$ is \emph{strictly resolvable}. 
\end{definition}
If the matrix $R$ is resolvable, then we say that the polynomials $(R_{n,k}(t))_{0\leq k \leq n \leq N}$ \emph{resolve} $R$. Notice that $(\lambda_{n,k})_{0\leq k\leq n <N}$ uniquely determines $(R_{n,k}(t))_{0\leq k \leq n \leq N}$.
\begin{example}
The identity matrix is resolvable with $R_{n,k}(t)=t^n$ and $\lambda_{n,k}=0$ for all $0 \leq k \leq n$. 
\end{example}

\begin{example}\label{forex}
    Let $(x_0, x_1, x_2, \ldots)$ be a sequence of real numbers such that $x_i \geq 1$ for each $i$, and consider the matrix $R = (r_{n,k})_{n,k=0}^\infty$  where
    \begin{equation}
    \label{maximal-matrix}
        r_{n,k} =
        \begin{cases}
         x_k &\mbox{ if } k<n, \\
            1 &\mbox{ if } k=n, \\
            0 &\mbox{ if } k > n.
        \end{cases}
    \end{equation}
    Then $R$ is resolvable with 
    $$
        R_{n,k} (t) =
        \begin{cases}
            R_n (t) &\mbox{ if } k=0, \\
            t^n + (x_{n-1} -1) t^{n-1} &\mbox{ if } 1 \leq k \leq n-1, \\
            t^n &\mbox{ if } k = n,
        \end{cases}
    $$
    and
    $$
        \lambda_{n,k} =
        \begin{cases}
        x_0 &\mbox{ if } n=k=0,\\
            1 &\mbox{ if } n>k=0, \\
            0 &\mbox{ if } 1 \leq k \leq n-1, \\
            x_n - 1 &\mbox{ if } k=n.
        \end{cases}
    $$
\end{example}

\begin{example}
The matrix $\left(\binom n k \right)_{n,k=0}^\infty$ is resolvable with $R_{n,k}(t)= t^k(1+t)^{n-k}$ and $\lambda_{n,k}=1$ for all $0 \leq k \leq n$.
\end{example}

Notice that, by \eqref{r-pasc},
\begin{equation}
\label{r-sum}
    R_{n+1,k}(t)= t^{n+1}+ \sum_{j \geq k} \lambda_{n,j}R_{n,j}(t), \ \ \ \ 0\leq k \leq n<N, 
\end{equation}
if $\{R_n(t)\}_{n=0}^N$ is resolvable.

The next theorem characterizes resolvability in terms of totally nonnegative matrices and ``quantum'' real-rooted polynomials. 
\begin{theorem}\label{eqcon}
    Let $R=(r_{n,k})_{n,k=0}^N$ be a lower {unitriangular matrix}. The following are equivalent:
    \begin{enumerate}
        \item $R$ is resolvable,
        \item There are linear diagonal operators $\alpha_i : \RR[t] \to \RR[t]$, $1 \leq i \leq N$,  such that 
        $$
            \alpha_i(t^k)= \alpha_{i,k}t^k, \mbox{ where } \alpha_{i,k} \geq 0 \mbox{ for all } i,k, 
        $$
        and 
        $$
            R_n(t) = (t+ \alpha_1)(t+\alpha_2) \cdots (t+\alpha_n) 1.
        $$
        \item $R$ is $\mathrm{TN}$. 
    \end{enumerate} 
    Moreover if (2) is satisfied, then $R_{n,k}(t) = (t+\alpha_1)\cdots (t+\alpha_{n-k})t^k$ and $\lambda_{n,k}= \alpha_{n+1-k,k}$. 
\end{theorem}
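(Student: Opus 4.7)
The plan is to close the cycle $(1)\Leftrightarrow(2)\Rightarrow(3)\Rightarrow(2)$, with the ``moreover'' formulas emerging as a byproduct of $(1)\Leftrightarrow(2)$.

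I would first prove $(1)\Leftrightarrow(2)$ directly from Definition~\ref{resolv}. Given a resolution with parameters $\lambda=(\lambda_{n,k})$, define the diagonal operator $\alpha_i\colon\RR[t]\to\RR[t]$, for each $1\le i\le N$, by
\[
\alpha_i(t^k) = \alpha_{i,k}\, t^k,\qquad \alpha_{i,k}:=\begin{cases}\lambda_{i+k-1,k} & \text{if } 0\le k\le N-i,\\ 0 & \text{otherwise,}\end{cases}
\]
so that $\alpha_{i,k}\ge 0$ and $\alpha_{n+1-k,k}=\lambda_{n,k}$. Induction on $n-k$, with base case $R_{n,n}(t)=t^n$, then establishes the strengthened identity
\[
R_{n,k}(t) = (t+\alpha_1)(t+\alpha_2)\cdots(t+\alpha_{n-k})\cdot t^k;
\]
the inductive step combines \eqref{r-pasc} with the one-line computation $(t+\alpha_{n+1-k})(t^k)=t^{k+1}+\alpha_{n+1-k,k}t^k=t^{k+1}+\lambda_{n,k}t^k$. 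Setting $k=0$ gives (2). Conversely, given diagonal operators $\alpha_i$ as in (2), defining $R_{n,k}(t)$ and $\lambda_{n,k}$ by these same two formulas and verifying the four conditions of Definition~\ref{resolv} by inspection gives both $(2)\Rightarrow(1)$ and the ``moreover'' part.

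For $(2)\Rightarrow(3)$, I would expand $(t+\alpha_1)(t+\alpha_2)\cdots(t+\alpha_n)\cdot 1$ combinatorially by applying the factors right to left and recording, at each step $p\in\{1,\ldots,n\}$, whether we select the $t$-summand or the $\alpha_{n+1-p}$-summand. A monomial is then indexed by the set $T=\{p_1<\cdots<p_r\}$ of ``$\alpha$-steps''. Tracking the running degree $d$ shows that at step $p=p_l$ one has $d=p_l-l$, so the contribution of $T$ to the coefficient of $t^{n-r}$ is
\[
\prod_{l=1}^{r}\alpha_{n+1-p_l,\,p_l-l}=\prod_{l=1}^{r}\lambda_{n-l,\,p_l-l},
\]
where we used the relation $\alpha_{i,k}=\lambda_{i+k-1,k}$. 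Under the correspondence ``$t$-summand at step $p$ $\leftrightarrow$ horizontal edge, $\alpha$-summand at step $p$ $\leftrightarrow$ vertical edge,'' this is precisely the weighted enumeration of paths in $\Gamma_N$ from $(n,0)$ to $(n-r,n-r)$, with $p_l$ recording the step at which the $l$th vertical edge is traversed. Hence $R=R(\Gamma_N,\lambda)$ with $\lambda_{n,k}=\alpha_{n+1-k,k}$, which is $\mathrm{TN}$ by Lindstr\"om--Gessel--Viennot.

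Finally, for $(3)\Rightarrow(2)$ I would apply Theorem~\ref{WL} to obtain nonnegative $\lambda$ with $R=R(\Gamma_N,\lambda)$, set $\alpha_{i,k}:=\lambda_{i+k-1,k}$, and invoke the same combinatorial identity to conclude $R_n(t)=(t+\alpha_1)\cdots(t+\alpha_n)\cdot 1$. The main obstacle throughout is the index bookkeeping in the noncommutative expansion of the previous paragraph: because each $\alpha_j$ acts as a degree-dependent scalar rather than a constant, one must carefully track the running degree of the partially built polynomial at each step and match this data against the column coordinate visited by the corresponding lattice path in $\Gamma_N$. Once that identity is in hand, all the implications and the ``moreover'' formulas follow by inspection together with Theorem~\ref{WL}.
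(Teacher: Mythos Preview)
Your proposal is correct and follows essentially the same approach as the paper: the correspondence $\alpha_{i,k}=\lambda_{i+k-1,k}$ for $(1)\Leftrightarrow(2)$ and the identification $R=R(\Gamma_N,\lambda)$ via Theorem~\ref{WL} for the equivalence with $(3)$ are exactly what the paper uses. The only cosmetic difference is that the paper routes the link with total nonnegativity through $(1)$ rather than $(2)$, defining $R_{n,k}(t)$ directly as the generating polynomial for weighted paths in $\Gamma_N$ starting at $(n,k)$ and then observing that \eqref{r-pasc} holds ``by construction''; your explicit noncommutative expansion is a more detailed version of the same identification, carried out only for the starting point $(n,0)$.
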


\begin{proof}

Suppose (2) holds, and let  $R_{n,k}(t) = (t+\alpha_1)\cdots (t+\alpha_{n-k})t^k$. Then 
\begin{align*}
R_{n+1,k}(t)-R_{n+1,k+1}(t) &= (t+\alpha_1)\cdots (t+\alpha_{n-k})( (t+\alpha_{n+1-k})t^k-t^{k+1})\\
&=\alpha_{n+1-k,k}R_{n,k}. 
\end{align*}
Hence $R$ satisfies (1) with $\lambda_{n,k}= \alpha_{n+1-k,k}$. Conversely if $R$ satisfies (1), then $R$ satisfies (2) with 
$\alpha_{i,k}=\lambda_{k+i-1,k}$.  

Now, suppose $R$ is a $\mathrm{TN}$-matrix. Then, by Theorem~\ref{WL}, $R=R(\Gamma_N, \lambda)$ for some nonnegative array $\lambda$. Define $(R_{n,k}(t))_{0\leq k \leq n \leq N}$ as follows. Let $$R_{n,k}(t)=\sum_{j=0}^n r_{n,j}^k(\lambda) t^j,$$ where $r_{n,j}^k(\lambda)$ is the $\lambda$-weighted sum over {all} paths from $(n,k)$ to $(j,j)$ in $\Gamma$. {Then} $R_{n,k}(t)$ {satisfies} Definition \ref{resolv} by construction. 
Conversely if  the polynomials $R_{n,k}(t)$ and $\lambda$ satisfy Definition \ref{resolv}, then $R=R(\Gamma_N, \lambda)$ since $\lambda$ uniquely determines $R_{n,k}(t)$.
\end{proof}

\begin{example}
Let $x_1,x_2, \ldots$ be nonnegative real numbers, and consider the matrix $R=(e_k(x_1,\ldots, x_n))_{n,k=0}^\infty$, where $e_k(x_1,\ldots,x_n)$ is the $k$th \emph{elementary symmetric polynomial} in the variables $x_1,\ldots, x_n$. Then $R_n(t) = (t+ \alpha_1)(t+\alpha_2) \cdots (t+\alpha_n) 1$, where $\alpha_i(t^k)= x_i t^k$ for all $i,k$. Hence $R$ is $\mathrm{TN}$ and $R_{n,k}(t)= t^k (t+x_1) \cdots (t+x_{n-k})$. 

\end{example}

\begin{remark}\label{thethe}
Henceforth we will always assume that the numbers $\lambda_{n,k}$ satisfy \eqref{limp}, so that the polynomials $R_{n,k}(t)$ associated to a resolvable matrix $R$ are uniquely determined. 
\end{remark}

\section{Interlacing sequences of polynomials from resolvable matrices}
\label{sec-interlacing}
In this section we will use resolvability to a prove a general theorem, Theorem~\ref{mainUTP}, which for any lower triangular $\mathrm{TN}$-matrix with all diagonal entries equal to one produces a family of real-rooted and mutually interlacing polynomials. This will be our main tool in forthcoming sections. 

Suppose $f(t), g(t) \in \RR[t]$ are real-rooted polynomials with positive leading coefficients, and that 
$$
\cdots \leq \alpha_3 \leq \alpha_2 \leq \alpha_1  \ \  \mbox{ and } \ \ \cdots \leq \beta_3 \leq \beta_2 \leq \beta_1 
$$
are the zeros of $f(t)$ and $g(t)$, respectively. We say that the zeros of $f(t)$ \emph{interlace} those of $g(t)$ if 
$$
 \cdots \leq \alpha_3 \leq \beta_3 \leq \alpha_2 \leq \beta_2 \leq \alpha_1 \leq \beta_1, 
$$
and we write $f(t) \prec g(t)$. In particular the degrees of $f(t)$ and $g(t)$ differ by at most one.  By convention we also write $0 \prec f(t)$ and $f(t) \prec 0$ for any real-rooted polynomial $f(t)$. A sequence $\{f_i(t)\}_{i=0}^n$ of real-rooted polynomials with nonnegative coefficients is said to be an \emph{interlacing sequence} if $f_i(t) \prec f_j(t)$ for all $i<j$. Let $\mathcal{P}_n$ be the set of all interlacing sequences $\{f_i(t)\}_{i=0}^n$ of polynomials.

\begin{lemma}\label{basint}
Let $\{f_i(t)\}_{i=0}^n \in \mathcal{P}_n$.  
\begin{enumerate}
\item If  $\{\lambda_i\}_{i=0}^n \in \RR_{\geq 0}^{n+1}$, then 
$$
f_0(t) \prec \lambda_0 f_0(t) +  \lambda_1 f_1(t)+\cdots +  \lambda_n f_n(t) \prec f_n(t).
$$
\item If $f_i(t)$ is  of degree $d$ and its zeros all lie in $[-1,0]$ for each $0\leq i \leq n$, then the sequence $\{g_i(t)\}_{i=0}^{n+1}$ defined by 
$$
g_k(t) = t\sum_{j=0}^{k-1}  f_{j}(t) + (1+t) \sum_{j = k}^n  f_{j}(t)
$$
is an interlacing sequence of polynomials whose zeros all lie in $[-1,0]$. 
\end{enumerate}
\end{lemma}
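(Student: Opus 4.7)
Part (1) follows from the Hermite--Kakeya--Obreschkoff theorem (HKO): the set $\{p : f_0 \prec p,\ \text{positive leading coefficient}\} \cup \{0\}$ is a convex cone. Since $f_0 \prec f_i$ for every $i$ (with $f_0 \prec f_0$ trivially), we obtain $f_0 \prec \sum_i \lambda_i f_i$, and the symmetric argument applied to $f_n$ gives $\sum_i \lambda_i f_i \prec f_n$.

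For part (2), the plan is to exploit the HKO characterization of interlacing sequences: $\{g_k\}$ is interlacing if and only if $\sum_k \lambda_k g_k$ is real-rooted for every $\lambda_k \geq 0$. Setting $F = \sum_j f_j$ and $h_k = \sum_{j \geq k} f_j$, so that $g_k = tF + h_k$, Abel summation yields
\[
\sum_{k=0}^{n+1} \lambda_k g_k \;=\; \Lambda\, tF \;+\; Q, \qquad Q \;:=\; \sum_{i=0}^n \nu_i h_i,
\]
for some $\nu_i \geq 0$, where $\Lambda = \sum_k \lambda_k$. The sequence $\{h_i\}_{i=0}^n$ is itself interlacing: any nonnegative combination $\sum_i \beta_i h_i = \sum_j \sigma_j f_j$ has nondecreasing nonnegative weights $\sigma_j$ and is therefore real-rooted by part (1), whence $\{h_i\}$ is interlacing by HKO. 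Applying part (1) to $\{h_i\}$ now gives $F = h_0 \prec Q$. A short root-comparison (a \emph{shift lemma}: if $p \prec q$ are of equal degree with zeros in $(-\infty,0]$, then $q \prec tp$) converts this to $Q \prec tF$, and HKO applied to the compatible pair $(Q, tF)$ then shows that $Q + \Lambda tF = \sum_k \lambda_k g_k$ is real-rooted, establishing the interlacing of $\{g_k\}$.

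Zero localization in $[-1,0]$ follows from direct sign analysis. Each $f_j(t)$ is positive for $t > 0$ and has sign $(-1)^d$ for $t < -1$, since its zeros lie in $[-1,0]$; consequently $g_k(t) > 0$ for $t > 0$, and for $t < -1$ the summands $tF(t)$ and $h_k(t)$ have opposite signs while $|tF(t)| > |F(t)| \geq |h_k(t)|$, so $g_k(t)$ has constant sign $(-1)^{d+1}$ on $(-\infty,-1)$. Combined with the real-rootedness established above, this forces all $d+1$ zeros of $g_k$ into $[-1,0]$. The conceptual heart of the argument is the Abel summation identifying $Q$ with a nonnegative combination of the $h_i$'s; without this rewriting there is no direct handle on the real-rootedness of $\sum_k \lambda_k g_k$. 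The main technical subtlety I anticipate is verifying the shift lemma, which requires aligning the $d$ pairs of interlacing roots of $F$ and $Q$ after a zero is prepended at $0$.
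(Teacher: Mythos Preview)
Your Part (1) is fine and matches the standard argument the paper cites.

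For Part (2), your route is genuinely different from the paper's. The paper applies the M\"obius substitution $t \mapsto t/(1-t)$: setting $h_j(t)=(1-t)^d f_j\!\left(\tfrac{t}{1-t}\right)$ turns ``zeros in $[-1,0]$'' into ``nonnegative coefficients'', and $g_k$ transforms into $r_k(t)=t\sum_{j<k}h_j+\sum_{j\ge k}h_j$; the interlacing of $\{r_k\}$ is then a known result (Corollary 7.8.7 in \cite{branden2015unimodality}), and one transforms back. Your approach stays in the original coordinates, using the decomposition $g_k=tF+h_k$ and a shift lemma. This is a reasonable strategy, and your zero-localization paragraph is correct.

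There is, however, a genuine gap. The ``HKO characterization of interlacing sequences'' you invoke---that $\{g_k\}$ is an interlacing sequence if and only if every nonnegative combination is real-rooted---is false as a biconditional in the sense the lemma requires. Real-rootedness of all nonnegative combinations gives only \emph{pairwise} interlacing, with no control over direction. For instance, with $f_0=t+1$ and $f_1=t+2$ every nonnegative combination is real-rooted, yet $f_0\not\prec f_1$ in the paper's convention. You use this characterization twice: once to conclude that $\{h_i\}$ is interlacing (so that Part (1) gives $F=h_0\prec Q$), and again at the end to conclude $\{g_k\}$ is interlacing. Both steps need the specific order, which your argument does not supply.

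The gap is repairable. For $\{h_i\}$, note that Part (1) applied to $\{f_i,\ldots,f_n\}$ gives $f_i\prec h_{i+1}$, and then a direct zero-count shows that $p\prec q$ (same degree, positive leading coefficients) implies $p+q\prec q$; hence $h_i=f_i+h_{i+1}\prec h_{i+1}$. A similar argument using $g_k-g_{k+1}=f_k$ handles the order for $\{g_k\}$ once each $g_k$ is known to be real-rooted with zeros in $[-1,0]$. With these patches your proof goes through, but as written the proposal is incomplete at exactly the point where order matters.
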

\begin{proof}
The first result is standard, see e.g. \cite[Lemma 2.6]{borcea2010multivariate}.  
For the second result,  let $h_j(t)= (1-t)^df_j(t/(1-t))$ and $r_j(t) = (1-t)^{d+1}g_j(t/(1-t))$. Then $\{h_j(t)\}_{j=0}^n$ is an interlacing sequence of polynomials with nonnegative coefficients, since the zeros of $f_j(t)$ all lie in $[-1,0]$. Moreover, 
$$
r_k(t) = t\sum_{j=0}^{k-1}h_j(t)+  \sum_{j=k}^nh_j(t),
$$
and hence $\{r_j(t)\}_{j=0}^{n+1}$ is interlacing by e.g. \cite[Corollary 7.8.7]{branden2015unimodality}. Since $g_j(t)= (1+t)^{d+1}r_j(t/(1+t))$, the result follows.
\end{proof}

For $N \in  \NN\cup \{\infty\}$, let $\RR_N[t]$ be the linear space of all polynomials in $\RR[t]$ of degree at most $N$. 
\begin{definition}\label{subop}
Let $R=(r_{n,k})_{n,k=0}^N$, where $N \in \NN\cup \{\infty\}$, be a lower {unitriangular matrix}.

The \emph{chain polynomials} associated to $R$ are the polynomials in $\{p_n(t)\}_{n=0}^N$ defined by $p_0(t)=1$, and 
\begin{equation}\label{polreceq}
p_n(t)= t \sum_{k=0}^{n-1}r_{n,k}p_k(t), \ \ \ 0<n \leq N. 
\end{equation}  

The \emph{subdivision operator} associated to $R$ is the linear map  $\EE : \RR_N[t] \to \RR_N[t]$ defined by $\EE(t^n)= p_n(t)$ for each $0 \leq n \leq N$. 
\end{definition}

Notice that the subdivision operator may alternatively be defined by $\EE(1)= 1$, and 
\begin{equation}
\label{fundE}
    \EE(t^n) = t\EE(R_n(t) -t^{n}), \ \ \ \mbox{ if } 0<n \leq N. 
\end{equation}

\begin{example}\label{binomet}
The subdivision operator for $\left(\binom n k \right)_{n,k=0}^\infty$ was considered in \cite{branden2015unimodality,brenti2008f}. This operator has the property that it maps  
the $f$-polynomial of a simplicial complex $\Delta$ to the $f$-polynomials of the barycentric subdivision of $\Delta$, see \cite[Ch. 7.3.3]{branden2015unimodality}. The chain polynomials in this case are given by 
\begin{equation}\label{snkk}
p_n(t) = \sum_{k=1}^n k! S(n,k) t^k, 
\end{equation}
where $S(n,k)$ is a Stirling number of the second kind \cite{stanley2011enumerative}. {This also follows from Proposition \ref{chain-interp-prop}, since the number of chains $\varnothing =S_0 \subset S_1 \subset S_2 \subset \cdots\subset S_k =[n]$ in the boolean lattice is $k!S(n,k)$.}

\end{example}

\begin{example}\label{forex2}
Let $x_0,x_1, \ldots$ be real numbers such that $x_i \geq 1$ for all $i$, and consider the matrix $R$ in Example~\ref{forex}. 
 Then 
 $$
 p_n(t) =  x_0 t (1+x_1t) (1+x_2t)\cdots (1+x_{n-1}t), \ \ \ n \geq 1.
 $$
\end{example}

\begin{proposition}\label{chainform}
Let $\mathsf{p}=(p_n(t))_{n=0}^N$ be the vector of chain polynomials associated to $R$. Then 
$$
\mathsf{p}= R(t)^{-1}(1,0,0,\ldots)^T, 
$$
where $R(t)= I-t(R-I)$, and $I$ is the identity matrix. Moreover 
$$
p_n(t) = (-1)^{n}\det \big(R(t) [\{1,2,\ldots, n\}, \{0,1,\ldots, n-1\}]\big), 
$$
i.e., $(-1)^{n}$ times the minor of $R(t)$ whose rows are indexed by $\{1,2,\ldots, n\}$ and columns by $\{0,1,\ldots, n-1\}$. 
\end{proposition}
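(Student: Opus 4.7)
The plan is to interpret the recurrence \eqref{polreceq} for $p_n(t)$ as a system of linear equations, and then read off both identities from standard linear algebra.

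First I would observe that $R(t) = I - t(R - I)$ is lower triangular with $1$'s on the diagonal, with $(R(t))_{n,k} = -t\, r_{n,k}$ for $k<n$. Thus the equation $R(t)\mathsf{p} = (1,0,0,\ldots)^T$, read row by row, says
\[
p_0(t) = 1, \qquad p_n(t) - t\sum_{k=0}^{n-1} r_{n,k} p_k(t) = 0 \quad (n \geq 1),
\]
which is exactly \eqref{polreceq}. Since $R(t)$ is lower triangular with unit diagonal it is invertible (its finite principal truncations have determinant $1$), so $\mathsf{p} = R(t)^{-1}(1,0,0,\ldots)^T$ follows. In the infinite case, uniqueness is not an issue because the truncation of $R(t)$ to indices $\{0,1,\ldots,n\}$ already determines $p_0(t),\ldots,p_n(t)$, so one can carry out the argument level by level.

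For the determinant formula, I would fix $n$ and work with the $(n+1)\times(n+1)$ principal submatrix $\tilde R(t)$ of $R(t)$ on indices $\{0,1,\ldots,n\}$. Since $\det \tilde R(t) = 1$, Cramer's rule gives $p_n(t) = \det M_n$, where $M_n$ is $\tilde R(t)$ with its final column (indexed by $n$) replaced by $(1,0,\ldots,0)^T$. Expanding $\det M_n$ along this last column, only the $(0,n)$ entry contributes, yielding
\[
p_n(t) = (-1)^{0+n} \det\!\bigl(\tilde R(t)[\{1,\ldots,n\},\{0,\ldots,n-1\}]\bigr),
\]
and the minor on the right coincides with the corresponding minor of $R(t)$.

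There is essentially no obstacle here — the argument is a textbook application of Cramer's rule once the recurrence is repackaged as $R(t)\mathsf{p} = e_0$. The only small care needed is to phrase things via finite truncations so that the determinant makes sense even when $N = \infty$, and to keep the sign $(-1)^n$ straight when expanding along a column indexed by $n$ out of indices $\{0,\ldots,n\}$.
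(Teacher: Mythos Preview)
Your proof is correct and follows essentially the same approach as the paper: rewrite the recursion \eqref{polreceq} as the linear system $R(t)\mathsf{p} = (1,0,0,\ldots)^T$ and then apply Cramer's rule. You supply more detail than the paper (invertibility via unit triangularity, truncation for $N=\infty$, and the explicit cofactor expansion), but the argument is the same.
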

\begin{proof}
The recursion \eqref{polreceq} translates as $(1+t)\mathsf{p}-tR\mathsf{p}=(1,0,0,\ldots)^T$, which proves the first statement. The second statement follows from the first by Cramer's rule. 

\end{proof}

The next theorem is the most general theorem on real-rootedness in this paper. It produces a family of real-rooted polynomials {from} any totally nonnegative {unitriangular matrix}.  The case when $R=\left(\binom n k \right)_{n,k=0}^\infty$ was proved by the first author in \cite{branden2006linear}. 

\begin{theorem}\label{mainUTP}
Let $R=(r_{n,k})_{n,k=0}^N$ be a {unitriangular $\mathrm{TN}$-matrix}, and let $0 \leq n \leq N$. Let further $\{R_{n,k}(t)\}_{k=0}^n$ be  the family of polynomials afforded by Definition \ref{resolv} and Theorem~\ref{eqcon}. 

Then
$\{\EE(R_{n,k}(t))\}_{k=0}^n$ is an interlacing sequence of polynomials whose zeros all lie in the interval $[-1,0]$. 
\end{theorem}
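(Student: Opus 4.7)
My plan is induction on $n$, with the inductive hypothesis strengthened to include the additional claim that all nonzero elements of the sequence share a common degree. The base case $n=0$ is immediate since $\EE(R_{0,0}(t)) = \EE(1) = 1$.

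For the inductive step, I first derive the identity that reduces everything to Lemma~\ref{basint}(2). Iterating~\eqref{r-pasc} yields~\eqref{r-sum}, namely $R_{n+1,k}(t) = t^{n+1} + \sum_{j=k}^n \lambda_{n,j} R_{n,j}(t)$. Specializing to $k=0$ and combining with~\eqref{fundE} gives
\begin{equation*}
\EE(t^{n+1}) = t\,\EE\Bigl(R_{n+1}(t) - t^{n+1}\Bigr) = t\sum_{j=0}^n \lambda_{n,j}\EE(R_{n,j}(t)).
\end{equation*}
Substituting this into $\EE$ applied to~\eqref{r-sum} and splitting the first sum at $j=k$ produces
\begin{equation*}
\EE(R_{n+1,k}(t)) = t\sum_{j=0}^{k-1}\lambda_{n,j}\EE(R_{n,j}(t)) + (1+t)\sum_{j=k}^n \lambda_{n,j}\EE(R_{n,j}(t)),
\end{equation*}
which is precisely the operator $g_k$ of Lemma~\ref{basint}(2) applied to $f_j(t) := \lambda_{n,j}\EE(R_{n,j}(t))$. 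By the inductive hypothesis, and using that nonnegative scaling preserves both interlacing (with the convention $0 \prec f \prec 0$) and the common-degree property among nonzero terms, $\{f_j\}_{j=0}^n$ is an interlacing sequence in $[-1,0]$ whose nonzero entries share a common degree $d$. Invoking Lemma~\ref{basint}(2) then delivers interlacing of $\{\EE(R_{n+1,k}(t))\}_{k=0}^{n+1}$ with zeros in $[-1,0]$. To propagate the degree claim, I note that the coefficient of $t^{d+1}$ in $\EE(R_{n+1,k}(t))$ equals the coefficient of $t^d$ in $\sum_{j=0}^n f_j$, which is a sum of nonnegative contributions and is strictly positive whenever at least one $f_j$ is nonzero; hence each nonzero $\EE(R_{n+1,k}(t))$ has degree exactly $d+1$.

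The main subtlety I anticipate is the common-degree bookkeeping required by Lemma~\ref{basint}(2): the lemma is formulated with a uniform degree hypothesis, and the strengthened inductive hypothesis is precisely what supplies this. Vanishing of some $\lambda_{n,j}$'s (constrained by~\eqref{limp}) may force some $f_j = 0$, but the interlacing convention for the zero polynomial (which may make consecutive $g_k$'s coincide, harmlessly for interlacing) together with the leading-coefficient argument above keeps the induction running.
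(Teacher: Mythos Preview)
Your proof is correct and follows essentially the same route as the paper's: induction on $n$, derivation of the identity
\[
\EE(R_{n+1,k}(t)) = t\sum_{j=0}^{k-1}\lambda_{n,j}\EE(R_{n,j}(t)) + (1+t)\sum_{j=k}^{n}\lambda_{n,j}\EE(R_{n,j}(t))
\]
via \eqref{r-sum} and \eqref{fundE}, and then an appeal to Lemma~\ref{basint}(2) applied to $f_j=\lambda_{n,j}\EE(R_{n,j}(t))$. The one difference is that you carry along the auxiliary claim that the nonzero $\EE(R_{n,k}(t))$ share a common degree, in order to meet the uniform-degree hypothesis of Lemma~\ref{basint}(2); the paper applies the lemma without making this explicit. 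Your leading-coefficient argument (the $[t^{d+1}]$ computation) and your handling of vanishing $f_j$'s (observing $g_{k+1}=g_k-f_k$, so zeros produce harmless repeats) are both correct, so this extra bookkeeping is a legitimate tightening rather than a different method.
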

\begin{proof}
The proof is by induction over $n$, the case $n=0$ being trivial. 
Assume 
$\{\EE(R_{n,k}(t))\}_{k =0}^n$ is an interlacing sequence of polynomials whose zeros lie in $[-1,0]$.  Then by \eqref{r-sum} and \eqref{fundE}, 
\begin{align*}
 \EE(R_{n+1,k}(t)) &= \EE(t^{n+1})+ \sum_{j \geq k} \lambda_{n,j}\EE(R_{n,j}(t)) \\
%&= p_{n+1,n+1}(t) +  \sum_{j \geq k} \lambda_{n,j} p_{n,j}(t) \\
&= t\sum_{j \geq 0} \lambda_{n,j}\EE(R_{n,j}(t))+  \sum_{j \geq k} \lambda_{n,j} \EE(R_{n,j}(t)) \\
&= t \sum_{j =0 }^{k-1} \lambda_{n,j} \EE(R_{n,j}(t)) + (1+t) \sum_{j \geq k} \lambda_{n,j} \EE(R_{n,j}(t)). 
\end{align*}
The proof now follows from Lemma~\ref{basint} (2), since the sequence $\{\lambda_{n,j}\EE(R_{n,j}(t))\}_{j=0}^n$ is interlacing.
\end{proof}

\begin{theorem}
\label{maint}
    Let $R=(r_{n,k})_{n,k=0}^N$, where $N \in \NN\cup \{\infty\}$, be a lower {unitriangular matrix}. If $R$ is totally nonnegative, then for each $n \in \NN$, the zeros of the chain polynomial $p_n(t)$ defined by \eqref{polreceq} are real and located in the interval $[-1,0]$. Moreover, the zeros of $p_n(t)$ interlace those of $p_{n+1}(t)$ for each $0 \leq n <N$. 
\end{theorem}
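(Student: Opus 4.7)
The plan is to deduce Theorem~\ref{maint} directly from Theorem~\ref{mainUTP}, with just a short root-comparison argument at the end. Since $R_{n,n}(t)=t^n$ by Definition~\ref{resolv}, we have $p_n(t)=\EE(t^n)=\EE(R_{n,n}(t))$, so $p_n(t)$ appears as the last entry of the interlacing sequence $\{\EE(R_{n,k}(t))\}_{k=0}^n$ produced by Theorem~\ref{mainUTP}. Consequently $p_n(t)$ is real-rooted with all zeros in $[-1,0]$, which handles the first assertion.

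For the interlacing statement $p_n\prec p_{n+1}$, I would extract from \eqref{fundE} (at index $n+1$) and \eqref{r-sum} (at $k=0$) the identity
\begin{equation*}
p_{n+1}(t) \,=\, t\sum_{j=0}^{n}\lambda_{n,j}\,\EE(R_{n,j}(t)),
\end{equation*}
which is precisely the substitution for $\EE(t^{n+1})$ that appears inside the proof of Theorem~\ref{mainUTP}. Applying part (1) of Lemma~\ref{basint} to the interlacing sequence $\{\EE(R_{n,j}(t))\}_{j=0}^n$ with the nonnegative weights $\lambda_{n,j}$ then gives
\begin{equation*}
\frac{p_{n+1}(t)}{t}\,\prec\,\EE(R_{n,n}(t))\,=\,p_n(t),
\end{equation*}
a relation between two polynomials of degree $n$ whose zeros all lie in $[-1,0]$.

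It remains to promote $p_{n+1}/t\prec p_n$ to $p_n\prec p_{n+1}$. For this I would compare roots directly, making essential use of the fact that $p_n(0)=0$, which is immediate from \eqref{polreceq}. Writing the roots of $p_n$ in decreasing order as $0=\beta_1\geq \beta_2\geq\cdots\geq \beta_n$ and the roots of $p_{n+1}/t$ as $\gamma_1\geq \cdots\geq \gamma_n$, the relation $p_{n+1}/t\prec p_n$ reads $\beta_1\geq \gamma_1\geq \beta_2\geq \gamma_2\geq\cdots\geq \beta_n\geq \gamma_n$. Since $\gamma_1\leq \beta_1=0$, the roots of $p_{n+1}=t\cdot(p_{n+1}/t)$ in decreasing order are $0\geq \gamma_1\geq \cdots\geq \gamma_n$. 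Prepending this extra root $0$ at the top of the previous chain yields exactly the inequalities that characterize $p_n\prec p_{n+1}$.

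The only delicate point, and the main potential obstacle, is this last promotion step: multiplication by $t$ does not in general preserve an interlacing in the required direction, and the argument hinges on the observation that $0$ is already the largest root of $p_n$, so that the new root at $0$ introduced by the factor $t$ slots into the chain without disrupting the alternation. Aside from this bookkeeping, the proof is purely a repackaging of Theorem~\ref{mainUTP} and Lemma~\ref{basint}.
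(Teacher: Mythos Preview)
Your proof is correct and follows essentially the same route as the paper's: both extract $p_{n+1}(t)=t\sum_{j}\lambda_{n,j}\EE(R_{n,j}(t))$ from the recursion in Theorem~\ref{mainUTP}, use Lemma~\ref{basint} on the interlacing sequence $\{\EE(R_{n,j})\}$ to compare $p_{n+1}$ with $p_n$, and then pass to $p_n\prec p_{n+1}$ via the fact that $0$ is already the top root of $p_n$. The only cosmetic difference is that the paper phrases the intermediate step as $p_{n+1}\prec t\,p_n$ (citing Lemma~\ref{basint}(2)) while you phrase it as $p_{n+1}/t\prec p_n$ via Lemma~\ref{basint}(1); these are equivalent here, and your explicit root-bookkeeping (including the observation $p_n(0)=0$ for $n\ge 1$, with the case $n=0$ being trivial) makes the final implication transparent.
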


\begin{proof}
By Theorem~\ref{mainUTP} it remains to prove that $p_{n}(t) \prec p_{n+1}(t)$ for $0 \leq n <N$. By the recursion in the proof of Theorem~\ref{mainUTP}, 
$$
p_{n+1}(t)= \EE(R_{n+1,n+1}(t))=  t \sum_{j =0 }^{n} \lambda_{n,j} \EE(R_{n,j}(t)).  
$$
Since the sequence $\{\EE(R_{n,k}(t))\}_{k \geq 0}$ is interlacing, Lemma~\ref{basint} (2) implies $p_{n+1}(t) \prec t p_{n}(t)$, which implies  $p_{n}(t) \prec p_{n+1}(t)$. 
\end{proof}

\section{P\'olya frequency sequences}
\label{sec-polya}

Recall that a sequence $\{a_i\}_{i=0}^\infty$ of real numbers is a \emph{P\'olya frequency sequence} if the Toeplitz matrix $(a_{i-j})_{i,j=0}^\infty$ is $\mathrm{TN}$, where $a_k=0$ if $k<0$. P\'olya frequency sequences were characterized by Aissen, Schoenberg,  Whitney and Edrei \cite{aissen1951generating} as follows: 

\begin{theorem}\label{ASWE}
    A sequence $\{a_i\}_{i=0}^\infty$ of real numbers is a P\'olya frequency sequence if and only if its generating function is of the form 
    \begin{equation}
    \label{PFS}
        \sum_{n=0}^\infty a_n x^n = C x^N e^{\gamma x} \prod_{i=1}^\infty \frac {1+ \alpha_ix} {1- \beta_ix},
    \end{equation}
    where $C, \gamma, \alpha_i, \beta_i$ are nonnegative real numbers, $N \in \NN$, and $\sum_{i=1}^\infty (\alpha_i+\beta_i)<\infty$. 
\end{theorem} 
\begin{remark}\label{polpf}
Let $P : \NN \rightarrow \RR$. Then $P$ is a polynomial of degree $d$ if and only if 
$$
\sum_{n=0}^\infty P(n) x^n = \frac {h(x)} {(1-x)^{d+1}},
$$
where $h(x)$ is a polynomial of degree at most $d$ for which $h(1) \neq 0$, see e.g. \cite[Corollary 4.3.1]{stanley2011enumerative}. Hence, by Theorem~\ref{ASWE}, the sequence $\{P(n)\}_{n =0}^\infty$, where $P$ is a polynomial of degree $d$, is a P\'olya frequency sequence if and only if all zeros of $h(x)$ are real and non-positive. 
\end{remark}

If $R$ is a lower triangular matrix and $i,j \in \NN$, consider the map $Z_{ij}^R : \NN \to \RR$ defined by  
$$
n \longmapsto (R^n)_{ij}, 
$$
that is, $Z_{ij}^R(n)$ is the $(i,j)$-entry of $R^n$. 

\begin{theorem}\label{Z-pos}
Suppose $R$ is a lower {unitriangular matrix}, and let $i\geq  j$ be nonnegative integers. 
\begin{itemize}
\item[(a)] 
$Z_{ij}^R(n)$ is a polynomial in $n$ of degree $i-j$. 
\item[(b)] If $R$ is $\mathrm{TN}$, then $\{Z_{ij}^R(n)\}_{n=0}^\infty$ is a P\'olya frequency sequence. 
\end{itemize}
\end{theorem}

\begin{proof}
The case when $j>0$ follows from the case when $j=0$ by observing that $(R^n)_{ij}= (S^n)_{i-j,0}$, where $S$ is the matrix obtained from $R$ by deleting the $j$ initial rows and columns of $R$. 

Suppose $j=0$. Let 
$$
f_i(t)= \sum_{n=0}^\infty Z_{i0}^R(n)t^n = \big( (I-tR)^{-1}\big)_{i0}.
$$
Since 
$$
I-tR= (1-t) \left(     \left(1+ \frac t {1-t} \right) I - \frac t {1-t} R     \right),
$$
it follows from Proposition~\ref{chainform} that 
$$
f_i(t)= \frac 1 {1-t} p_i\left(\frac t {1-t}\right), 
$$
where $p_i(t)$ is the polynomial defined by \eqref{polreceq}. 
Consider the polynomial $h_i(t)=(1-t)^i p_i(t/(1-t))$. Then  $h_i(t)$ has degree at most $i$, and 
$$
f_i(t)= \frac {h_i(t)}{(1-t)^{i+1}},  
$$
 so that (a) follows from Remark \ref{polpf}. 

If $R$ is $\mathrm{TN}$, then $p_i(t)$ is a polynomial of degree $i$ whose zeros all lie in $[-1,0]$ by Theorem~\ref{maint}. Hence all zeros of $h_i(t)$ are real and nonpositive, from which (b) follows by  Remark \ref{polpf}. 
\end{proof}

Let $g(x)=\sum_{n=1}^\infty b_n x^n \in \RR[[x]]$ be a formal power series with constant term equal to {zero}, and consider the expansion
$$
\frac 1 {1-tg(x)}= \sum_{n=0}^\infty g_n(t) x^n \in \RR[[x,t]].
$$
Then $g_n(t)$ is a polynomial of degree at most $n$ for each $n \in \NN$. We will now prove that for an important class of series $g(x)$, the polynomials $g_n(t)$ are all real-rooted.

\begin{theorem}
\label{PFT}
Let $f(x)$ be as in \eqref{PFS}, and consider the formal power series
\begin{equation}\label{pnp}
\frac 1 {1-t(f(x)-f(0))} = \sum_{n=0}^\infty r_n(t) x^n \in \RR[t][[x]]. 
\end{equation}
Then $r_n(t)$ is a real-rooted polynomial for each $n \in \NN$, and the zeros of $r_n(t)$ interlace those of $r_{n+1}(t)$. Moreover, if $f(0) \neq 0$, then all zeros of 
$r_n(t)$ lie in the interval $[-1/f(0),0]$.   
\end{theorem}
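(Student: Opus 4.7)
The plan is to apply Theorem~\ref{maint} to a suitable Toeplitz matrix after normalizing $f$ so that $f(0)=1$, and then extend to the degenerate case $f(0)=0$ by a continuity argument.

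First I would treat the case $f(0)=C>0$. Since $f(0)\ne 0$ forces $N=0$ in \eqref{PFS}, we have $f(x)=Ce^{\gamma x}\prod_i(1+\alpha_i x)/\prod_j(1-\beta_j x)$. Setting $\tilde f(x)=f(x)/C$ gives a function of the form \eqref{PFS} with $\tilde f(0)=1$, so writing $\tilde a_n=a_n/C$, the sequence $(\tilde a_n)_{n\ge 0}$ is a P\'olya frequency sequence with $\tilde a_0=1$. Consequently the infinite lower triangular Toeplitz matrix $\tilde R=(\tilde a_{n-k})_{n,k\ge 0}$ (with $\tilde a_j=0$ for $j<0$) is TN and has all diagonal entries equal to one. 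Expanding $(1-s(\tilde f(x)-1))\sum_n\tilde r_n(s)x^n=1$ produces the recursion $\tilde r_0(s)=1$ and $\tilde r_n(s)=s\sum_{k=0}^{n-1}\tilde a_{n-k}\tilde r_k(s)$, which is exactly the chain polynomial recursion \eqref{polreceq} for $\tilde R$. Hence Theorem~\ref{maint} applies and gives that $\tilde r_n(s)$ is real-rooted with zeros in $[-1,0]$ and that $\tilde r_n(s)\prec \tilde r_{n+1}(s)$. The identity $1-t(f(x)-f(0))=1-Ct(\tilde f(x)-1)$ then yields $r_n(t)=\tilde r_n(Ct)$, and the claims for $r_n(t)$ follow with zeros in $[-1/C,0]=[-1/f(0),0]$.

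For the remaining case $f(0)=0$, I would write $f(x)=x^N g(x)$ with $N\ge 1$ and $g(x)=Ce^{\gamma x}\prod_i(1+\alpha_i x)/\prod_j(1-\beta_j x)$; we may assume $C>0$, since $C=0$ gives $f\equiv 0$ and the statement is trivial. For $\epsilon>0$ set
\[
f_\epsilon(x)=(x+\epsilon)^N g(x)=\epsilon^N C\,(1+x/\epsilon)^N e^{\gamma x}\frac{\prod_i(1+\alpha_i x)}{\prod_j(1-\beta_j x)},
\]
which is again of the form \eqref{PFS} (with $N$ extra factors $(1+\alpha x)$, using $\alpha=1/\epsilon$) and satisfies $f_\epsilon(0)=\epsilon^N C>0$. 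By the previous case the polynomials $r_n^{(\epsilon)}(t)$ associated with $f_\epsilon$ are real-rooted and form an interlacing sequence. Since $f_\epsilon(x)\to f(x)$ coefficient-wise as $\epsilon\to 0^+$, the recursion \eqref{polreceq} gives $r_n^{(\epsilon)}(t)\to r_n(t)$ coefficient-wise, and the standard fact that real-rootedness and interlacing of polynomials of uniformly bounded degree are preserved under coefficient-wise limits (together with the paper's convention $0\prec f$ and $f\prec 0$) yields the claim for $r_n(t)$.

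The main obstacle I anticipate is this limiting step. As $\epsilon\to 0^+$ the interval $[-1/f_\epsilon(0),0]$ blows up to $(-\infty,0]$, so zeros of $r_n^{(\epsilon)}(t)$ can escape to $-\infty$, forcing $\deg r_n$ to drop below $\deg r_n^{(\epsilon)}$. This does not break real-rootedness, but it does require care in verifying that interlacing persists in the degenerate sense. The absence of an interval bound on the zeros of $r_n(t)$ in the statement when $f(0)=0$ is precisely an artifact of this phenomenon.
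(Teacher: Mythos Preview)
Your proposal is correct and follows essentially the same route as the paper's proof: normalize to $f(0)=1$, identify the $r_n$ with the chain polynomials of the TN Toeplitz matrix $(a_{n-k})$, apply Theorem~\ref{maint}, and for $f(0)=0$ perturb via $f_\epsilon(x)=(x+\epsilon)^N g(x)=\frac{(x+\epsilon)^N}{x^N}f(x)$ and pass to the limit. The paper phrases the limiting step as an appeal to Hurwitz's theorem, whereas you argue via closure of real-rootedness and interlacing under coefficient-wise limits and are more explicit about the possible degree drop; both justifications are adequate.
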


\begin{proof}
Suppose first that $f(0) \neq 0$, and let $r_n(f;t)$ be the polynomial $r_n(t)$ defined by \eqref{pnp}. Since 
$$
\frac 1 {1-t(f(x)-f(0))} = \frac 1  {1-tf(0) \left(\frac {f(x)}{f(0)}-1\right)},
$$
$r_n(f;t)= r_n(f/f(0); tf(0))$. Hence we may assume $f(0) =1$.

The matrix $R=(a_{i-j})_{i,j=0}^\infty$ is $\mathrm{TN}$, and hence the polynomials 
$p_n(t)$ defined by \eqref{polreceq} are $[-1,0]$-rooted polynomials by Theorem~\ref{maint}. Moreover $p_n(t) \prec p_{n+1}(t)$. Clearly, $p_0(t)=1$ and 
$
p_n(t)= -tp_n(t)+t\sum_{k=0}^{n}a_{n-k} p_k(t)$ for  $n \geq 1$, 
from which we deduce
$$
\frac 1 {1-t(f(x)-f(0)))} = \sum_{n=0}^\infty p_n(t) x^n. 
$$
Hence $r_n(t) = p_n(t)$, which proves the case when $f(0) \neq 0$ by  Theorem~\ref{maint}.

If $f(x)$ is as in \eqref{PFS}, with $f(0)=0$, then consider 
$$
f_{\epsilon}(x)=  \frac {(x+\epsilon)^N}{x^N} f(x), 
$$
where $\epsilon \geq 0$. 
Then, for fixed $n$, the coefficients of the degree $n$ polynomial $r_n(f_\epsilon;t)$ depend continuously on $\epsilon$. The conclusion follows from Hurwitz's theorem on the continuity of zeros. 
\end{proof}

\subsection{A problem of Forg\'acs and Tran}
The next theorem solves an open problem stated by Forg\'acs and Tran \cite[Problem 13]{forgacs2016polynomials}. 

\begin{theorem}\label{FTT}
Let $Q(x)$, where $Q(0)>0$, be a polynomial whose zeros are all real and positive, and let $r$ be a positive integer\footnote{Notice that Problem 13 is stated for all $r\geq 0$ in \cite{forgacs2016polynomials}. However for $r=0$, $q_n(t)$ fails to be a polynomial.}. Consider the power series 
$$
\frac 1 {Q(x)-tx^r} = \sum_{n=0}^\infty q_n(t) x^n. 
$$
Then $q_n(t)$ is a polynomial whose zeros are real and negative for each $n \in \NN$. 

Moreover, the zeros of $q_n(t)$ interlace those of $q_{n+1}(t)$ for each $n \in \NN$. 
\end{theorem}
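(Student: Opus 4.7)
The plan is to apply Theorem~\ref{PFT} to $f(x) := x^r/Q(x)$. Writing $Q(x) = Q(0)\prod_{i=1}^d(1 - \beta_i x)$ with $\beta_i>0$, we see that
\[
f(x) = \frac{1}{Q(0)}\, x^r \prod_{i=1}^d \frac{1}{1-\beta_i x}
\]
has the form \eqref{PFS} with $C=1/Q(0)$ and $N=r$. Theorem~\ref{PFT} then produces an interlacing sequence $\{r_n(t)\}_{n \geq 0}$ of real-rooted polynomials with nonnegative coefficients, satisfying $\sum_n r_n(t)\, x^n = (1-tf(x))^{-1}$ and $r_n \prec r_{n+1}$.

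Next I would use the factorization
\[
\frac{1}{Q(x) - tx^r} = \frac{h(x)}{1-tf(x)} = h(x)\sum_{n\ge 0}r_n(t)\, x^n, \qquad h(x):=\frac{1}{Q(x)} = \sum_{n\ge 0}b_n\, x^n,
\]
where $(b_n)$ is itself a P\'olya frequency sequence, so $b_n \ge 0$. Equating coefficients of $x^n$ then yields
\[
q_n(t) = \sum_{k=0}^n b_{n-k}\, r_k(t),
\]
a nonnegative combination of the interlacing sequence $\{r_k\}$. Lemma~\ref{basint}(1) gives $r_0 \prec q_n \prec r_n$, so $q_n(t)$ is real-rooted. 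The expansion $(Q(x) - tx^r)^{-1} = \sum_{k\ge 0} t^k x^{kr}/Q(x)^{k+1}$ shows that every coefficient of $q_n(t)$ is strictly positive, so all its zeros are negative.

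For the interlacing $q_n \prec q_{n+1}$, I would observe that any nonnegative combination $\alpha q_n + \beta q_{n+1}$ is again a nonnegative combination of $r_0,\ldots,r_{n+1}$, hence real-rooted by Lemma~\ref{basint}(1). Together with the degree control $\deg q_n = \lfloor n/r\rfloor$ (so consecutive degrees differ by at most one), this is enough to conclude $q_n \prec q_{n+1}$ via the standard characterization of interlacing for polynomials with nonnegative coefficients -- equivalently via the fact that the action of a TN Toeplitz matrix (convolution by a P\'olya frequency sequence) preserves interlacing of polynomial sequences; see \cite[Ch.~7]{branden2015unimodality}. The hard part will be exactly this transfer of interlacing through a P\'olya-frequency convolution, since a direct application of Theorem~\ref{maint} to a single lower-triangular matrix encoding $(Q(x) - tx^r)^{-1}$ does not appear to work: the natural candidate fails to be totally nonnegative once $r \geq 2$, so the factorization route through Theorem~\ref{PFT} seems essential.
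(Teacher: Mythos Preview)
Your first move—applying Theorem~\ref{PFT} to $f(x)=x^r/Q(x)$—is exactly what the paper does. But the paper then observes a simple algebraic identity you miss:
\[
\sum_{n\ge 1} r_n(t)\,x^n \;=\; \frac{1}{1-tf(x)}-1 \;=\; \frac{tf(x)}{1-tf(x)} \;=\; \frac{t\,x^r}{Q(x)-tx^r},
\]
so that $q_n(t)=t^{-1}r_{n+r}(t)$. Real-rootedness, negativity of the zeros, and $q_n\prec q_{n+1}$ then follow immediately from the corresponding facts for the $r_n$'s, with no convolution argument needed.

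Your route through the factorisation $\tfrac{1}{Q-tx^r}=\tfrac{1}{Q}\cdot\tfrac{1}{1-tf}$ and $q_n=\sum_k b_{n-k}r_k$ has a genuine gap. You invoke Lemma~\ref{basint}(1) to conclude that $q_n$ (and more generally $\alpha q_n+\beta q_{n+1}$) is real-rooted because it is a nonnegative combination of $r_0,\dots,r_{n+1}$; but that lemma requires $\{r_k\}_{k=0}^{n+1}$ to be an interlacing sequence, i.e.\ $r_i\prec r_j$ for \emph{all} $i<j$. Theorem~\ref{PFT} only gives $r_k\prec r_{k+1}$, and $\prec$ is not transitive. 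Worse, since $\deg r_k=\lfloor k/r\rfloor$, one has for instance $\deg r_{2r}-\deg r_0=2$, so $r_0\prec r_{2r}$ fails outright by the degree constraint in the definition of interlacing. Thus $\{r_k\}_k$ is \emph{not} an interlacing sequence in the sense of the paper once $r\ge 1$ and $n\ge 2r$, and Lemma~\ref{basint}(1) does not apply. The fallback you sketch—that convolution by a P\'olya frequency sequence preserves $\prec$ along a sequence—would itself require knowing $r_i\prec r_j$ for nonadjacent indices (try the single factor $(1+\alpha x)$ or $(1-\beta x)^{-1}$ to see this), so it does not rescue the argument without further work. The paper's identity sidesteps all of this.
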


\begin{proof}
The series 
$
f(x) = {x^r}/{Q(x)}
$
is of the form \eqref{PFS}. Hence the polynomials $r_n(t)$ in 
$$
\frac 1 {1-tf(x)} = \sum_{n=0}^\infty r_n(t) x^n
$$
are all real-rooted, and the zeros of consecutive polynomials interlace by Theorem~\ref{PFT}. Since 
$$
\sum_{n=1}^\infty r_n(t) x^n =  \frac {tx^r} {Q(x)-tx^r},
$$
we see that $q_n(t) = t^{-1}r_{n+r}(t)$, from which the theorem follows. 
\end{proof}

\section{\texorpdfstring{$\mathrm{TN}$}--posets}
\label{sec-tn}
For undefined poset terminology we refer to \cite{stanley2011enumerative}. 
In this section we study chain polynomials associated to what we call quasi-rank uniform posets.  Suppose $P$ is a locally finite poset. Define a \emph{quasi-rank function} $\rho : P \to \NN$ by 
$$
    \rho(x) = \max\{ k : x_0 <x_1 < \cdots <x_k=x, {\mbox{ where } x_i \in P \mbox{ for all } 0\leq i \leq k}\}, 
$$
{for each $x \in P$.}
The \emph{quasi-rank} of $P$ is $\rho(P)= \sup\{ \rho(x) : x \in P\} \in \NN\cup\{\infty\}$.  
\begin{definition}
\label{rank uniform}
Let $P$ be a locally finite poset.  %with a least element $\zero$. 
$P$ is called \emph{quasi-rank uniform} if
for any $x, y\in P$ with $\rho(x)=\rho(y)$,
$$
|\{ z \in \langle x \rangle : \rho(z) = k \}| = |\{ z \in  \langle y \rangle  : \rho(z) = k \}|, \ \ \mbox{ for all } 0 \leq k \leq \rho(x),  
$$
where $ \langle x \rangle = \{z \in P : z \leq x\}$. 
If  in addition  $\rho(y) = \rho(x)+1$ whenever $y$ covers $x$ in $P$, i.e., when $\rho$ is the rank function of $P$, then we say that $P$ is \emph{rank uniform}.   
\end{definition}

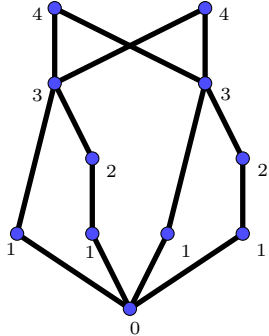
\begin{figure}[H]
\centering
\begin{tikzpicture}[line cap=round,line join=round,>=triangle 45,x=1cm,y=1cm,scale=0.5]
\clip(-5,-6) rectangle (10.339115239601984,3.395947613421145);
    \draw [line width=1pt] (-1,-3)-- (2,-5);
    \draw [line width=1pt] (1,-3)-- (2,-5);
    \draw [line width=1pt] (2,-5)-- (3,-3);
    \draw [line width=1pt] (2,-5)-- (5,-3);
    \draw [line width=1pt] (1,-3)-- (1,-1);
    \draw [line width=1pt] (1,-1)-- (0,1);
    \draw [line width=1pt] (-1,-3)-- (0,1);
    \draw [line width=1pt] (5,-3)-- (4,1);
    \draw [line width=1pt] (3,-3)-- (3,-1);
    \draw [line width=1pt] (3,-1)-- (4,1);
    \draw [line width=1pt] (0,3)-- (0,1);
    \draw [line width=1pt] (0,3)-- (4,1);
    \draw [line width=1pt] (4,1)-- (4,3);
    \draw [line width=1pt] (4,3)-- (0,1);
    \begin{scriptsize}
        \draw [fill=ududff] (2,-5) circle (5.0pt);
        \draw (1.8,-5.142979430587969) node[anchor=north west] {$0$};
        \draw [fill=ududff] (-1,-3) circle (5.0pt);
        \draw (-1.5,-3.046320446741683) node[anchor=north west] {$1$};
        \draw [fill=ududff] (1,-3) circle (5.0pt);
        \draw (0.6,-3.1384034426538507) node[anchor=north west] {$1$};
        \draw [fill=ududff] (1,-1) circle (5.0pt);
        \draw (0.1662321852744877,-0.9638280776511151) node[anchor=north west] {$2$};
        \draw [fill=ududff] (0,1) circle (5.0pt);
        \draw (-0.8,1.0407479102830028) node[anchor=north west] {$3$};
        \draw [fill=ududff] (3,-3) circle (5.0pt);
        \draw (3.1495582510750277,-3.0959035983866965) node[anchor=north west] {$1$};
        \draw [fill=ududff] (5,-3) circle (5.0pt);
        \draw (5.154134239009145,-3.06757036887526) node[anchor=north west] {$1$};
        \draw [fill=ududff] (3,-1) circle (5.0pt);
        \draw (3.2,-0.956744770273256) node[anchor=north west] {$2$};
        \draw [fill=ududff] (4,1) circle (5.0pt);
        \draw (4.204971050376029,1.0832477545501573) node[anchor=north west] {$3$};
        \draw [fill=ududff] (0,3) circle (5.0pt);
        \draw (-0.8,3.1940733531521617) node[anchor=north west] {$4$};
        \draw [fill=ududff] (4,3) circle (5.0pt);
        \draw (4.162471206108875,3.20823996790788) node[anchor=north west] {$4$};
        \end{scriptsize}
\end{tikzpicture}
\caption{A quasi-rank uniform poset, and the corresponding quasi-rank function.}
\label{fig:quasi-rank-uniform}
\end{figure}

We will primarily be interested in rank uniform posets, but find it more convenient working in the more general setting of quasi-rank uniform posets.

Let $P$ be quasi-rank uniform. 
If $x \in P$, $\rho(x)=n$ and $0 \leq k \leq n$, let 
$$
    r_{n,k}= r_{n,k}(P)=|\{ z \in  \langle x \rangle  : \rho(z) = k \}|, 
$$
and let $R= R(P)= (r_{n,k})_{n,k=0}^N$. 

\begin{proposition}\label{chain-interp-prop}
Let $P$ be a quasi-rank uniform poset, and let $n$ be a positive integer. The chain polynomial  $p_n(t)$, see \eqref{polreceq}, associated to the matrix $R(P)$ may be expressed as 
\begin{equation}\label{pnchain}
p_n(t) = \sum_{j \geq 1} |\{x_0 < x_1<\cdots < x_j=x : \rho(x_0)=0\}|t^j,  
\end{equation}
where $x$ is fixed element in $P$ for which $\rho(x)=n$.
\end{proposition}
\begin{proof}
 Let $P$ be quasi-rank uniform. Define polynomials $p_x(t)$, $x \in P$, by  $p_{x}(t)=1$ if $x$ is a minimal element, and 
 $$
 p_x(t)= \sum_{j \geq 1} |\{x_0 < x_1<\cdots < x_j=x : \rho(x_0)=0\}|t^j, \ \ \ \mbox{ if } \rho(x) > 0.
 $$
 If $\rho(x)> 0$, then 
 $$
 p_x(t)=t \sum_{y<x} p_y(t), 
 $$
 from which the proof follows by induction on $\rho(x)$. 
\end{proof}

We call the polynomial \eqref{pnchain} the  $n$th \emph{chain polynomial}\footnote{If $P$ has a least element $\zero$,  then the chain polynomial of the interval $[\zero, x]$ in $P$ is commonly defined as $t^{-1}(1+t)^{2}p_n(t)$.} of $P$. 

\begin{remark}\label{mob}
Let $P$ be a quasi-rank uniform poset with a least element $\zero$, and suppose $x \in P$ with $\rho(x)=n$. By \eqref{pnchain} and Philip Hall's Theorem~\cite[Prop. 3.8.5]{stanley2011enumerative}, $p_n(-1)= \mu_P(\zero,x)$, where $\mu_P$ is the M\"obius function of $P$. 

\end{remark}

\begin{definition}
\label{TN-poset}
    A quasi-rank uniform poset $P$ is a \emph{$\mathrm{TN}$-poset} if  the matrix $R(P)$ is totally nonnegative. 
\end{definition}

From Theorem~\ref{maint} and \eqref{pnchain} we deduce

\begin{theorem}
\label{TN-poset-chain}
    If $P$ is a $\mathrm{TN}$-poset, then for each $n\in \NN$ the zeros of the chain polynomial $p_n(t)$ are real and located in the interval $[-1,0]$. Moreover $p_{n}(t) \prec p_{n+1}(t)$. 
\end{theorem}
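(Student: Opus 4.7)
The plan is to obtain this as an essentially immediate corollary of Theorem~\ref{maint}, which already contains all the analytic content; only the hypotheses need to be matched and the combinatorial and matrix-theoretic definitions of $p_n(t)$ need to be reconciled.

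My first step will be to verify that $R(P)=(r_{n,k})_{n,k=0}^N$ satisfies the hypotheses of Theorem~\ref{maint}, namely that it is lower triangular with all diagonal entries equal to $1$, and totally nonnegative. Lower triangularity is immediate from the definition of $r_{n,k}$: if $\rho(x)=n$ and $k>n$, then no $z\in[\zero,x]$ can have $\rho(z)=k$, so $r_{n,k}=0$. The diagonal entries satisfy $r_{n,n}=1$ because the only $z\in[\zero,x]$ with $\rho(z)=\rho(x)$ is $x$ itself. Total nonnegativity of $R(P)$ is precisely the hypothesis that $P$ is a $\mathrm{TN}$-poset.

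The second step will be to identify the combinatorial polynomial $p_n(t)$ of $P$ defined in \eqref{pnchain} with the chain polynomial associated to the matrix $R(P)$ via the recursion \eqref{polreceq}. This is exactly the content of the proposition immediately preceding Theorem~\ref{TN-poset-chain}: starting from $p_{\zero}(t)=1$ and $p_x(t)=t\sum_{y<x}p_y(t)$, grouping the sum by the rank of $y$ and using quasi-rank uniformity shows that $p_x(t)$ depends only on $n=\rho(x)$ and obeys the same recursion \eqref{polreceq} as the matrix-theoretic chain polynomial of $R(P)$.

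Once these two reductions are in place, both conclusions transfer at once from Theorem~\ref{maint}: the zeros of $p_n(t)$ are real and lie in $[-1,0]$, and $p_n(t)\prec p_{n+1}(t)$ for $0\le n<N$. I do not anticipate a genuine obstacle here, since the heavy lifting — the Whitney-type resolvability of TN-matrices in Theorem~\ref{WL}, the operator factorization of Theorem~\ref{eqcon}, and the interlacing bookkeeping in Theorem~\ref{mainUTP} — has already been carried out in Section~\ref{sec-tnm} and Section~\ref{sec-interlacing}. The only point deserving explicit mention in the write-up is the verification of the two structural features of $R(P)$ together with the invocation of the preceding proposition.
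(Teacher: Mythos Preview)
Your proposal is correct and follows essentially the same route as the paper: the paper derives Theorem~\ref{TN-poset-chain} in one line from Theorem~\ref{maint} together with \eqref{pnchain}, and your write-up simply unpacks the verification that $R(P)$ is lower triangular with unit diagonal and that the poset chain polynomial coincides with the matrix-theoretic one. Nothing further is needed.
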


\begin{example}
Let $x_0,x_1,\ldots$ be positive integers with $x_0=1$, and recall the matrix $R$ considered in Examples \ref{forex} and \ref{forex2}. Let $P$ be the rank uniform poset that has exactly $x_i$ elements of rank $i$ for all $i$, and is such that $x<y$ whenever the rank of $x$ is smaller than the rank of $y$. Then $R(P)=R$, and $P$ is $\mathrm{TN}$ by Example \ref{forex}. Moreover 
 $$
 p_n(t) = t(1+x_1t) (1+x_2t)\cdots (1+x_{n-1}t), \ \ \ n \geq 1,
 $$
   by Example \ref{forex2}.  
\end{example}

\begin{remark}
Suppose $P$ is a quasi-rank uniform poset with a least element $\zero$, and let $x \in P$, $\rho(x)=k$. Then 
$$
Z_{k0}^R(n)= \left(R^n\right)_{k0}= |\{ \zero = x_0 \leq x_1 \leq \cdots \leq x_n =x\}|. 
$$
Hence $Z_{k0}^R(n)$ is equal to the \emph{zeta polynomial} $Z_{[\zero, x]}(n)$ of the interval $[\zero, x]$, see \cite[Section 3.12]{stanley2011enumerative}. Hence if $P$ is $\mathrm{TN}$, then $\{Z_{[\zero, x]}(n)\}_{n=0}^\infty$ is a P\'olya frequency sequence by Theorem~\ref{Z-pos}.
\end{remark}

If $S=\{s_0<s_1<\cdots\}$ is a subset of $\NN$, and $P$ is quasi-rank uniform let  
$$
P_S= \{ x \in P: \rho(x) \in S\}
$$
be the \emph{(quasi-)rank selected} subposet induced by $S$. Define $\rho_S : P_S \to \NN$ by $\rho_S(x)= k$ if $\rho(x)=s_k$. 

\begin{lemma}\label{rightr}
Let $P$ be a locally finite poset with quasi-rank function $\rho$, and let $S=\{s_0<s_1<\cdots\}$ be a subset of $\NN$. Then the quasi-rank function associated to $P_S$ is equal to $\rho_S$. 
\end{lemma}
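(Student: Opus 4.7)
The plan is to show both an upper and a lower bound on the length of the longest chain from $\zero$ to $x$ inside $P_S$, for any $x\in P_S$ with $\rho(x)=s_k$; the lemma then amounts to establishing that this length equals $k$.

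The first step, giving the upper bound, uses that $\rho$ is strictly increasing along chains in $P$ (a chain of length $m$ ending at $x$ combined with a chain of length $\rho(x)$ ending at any $x'>x$ shows $\rho(x')\ge \rho(x)+1$). Hence any chain $\zero=y_0<y_1<\cdots<y_m=x$ inside $P_S$ produces a strictly increasing sequence $0=\rho(y_0)<\rho(y_1)<\cdots<\rho(y_m)=s_k$ of elements of $S$. Since the elements of $S$ between $s_0$ and $s_k$ are precisely $s_0,s_1,\ldots,s_k$, this forces $m\le k$.

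For the lower bound, I would pick a chain in $P$ of maximal length from $\zero$ to $x$, namely $\zero=z_0<z_1<\cdots<z_{s_k}=x$. The same strict monotonicity of $\rho$ on chains gives $0=\rho(z_0)<\rho(z_1)<\cdots<\rho(z_{s_k})=s_k$; since this is a strictly increasing sequence of $s_k+1$ nonnegative integers ending at $s_k$, the only possibility is $\rho(z_i)=i$ for each $i$. Selecting the indices in $S$ then yields a chain $\zero=z_{s_0}<z_{s_1}<\cdots<z_{s_k}=x$ of length $k$ inside $P_S$.

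There is no real obstacle here; the only subtle point is the squeezing argument which forces $\rho(z_i)=i$ along a maximal chain. Combining the two bounds gives that the quasi-rank of $x$ in $P_S$ is exactly $k=\rho_S(x)$, which is the desired equality of quasi-rank functions.
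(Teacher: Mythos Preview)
Your proof is correct and follows essentially the same approach as the paper: both establish the lower bound by taking a maximal chain in $P$, arguing that $\rho(z_i)=i$ along it via a squeeze, and then selecting the indices in $S$; and both establish the upper bound by observing that the $\rho$-values along any chain in $P_S$ form a strictly increasing sequence of elements of $S$ capped at $s_k$. The only minor issue is the wording of your parenthetical justification for strict monotonicity of $\rho$, which is garbled, but the intended (and correct) argument is clear.
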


\begin{proof}
Let $\rho'$ be the quasi-rank function of $P_S$, and let $x \in P_S$ be such that $\rho(x)=s_k$. Let $x_0 < x_1<\cdots < x_{s_k} =x$ be a chain of maximal length in $\langle x \rangle$. Then $\rho(x_j)=j$ for all $0\leq j \leq s_k$, since otherwise $\rho(x_j)>j$ for some $j$ and then we could find a longer chain in $\langle x \rangle$. Hence $x_{s_0} < x_{s_1} <\cdots < x_{s_k} =x$ is a chain in $P_S$, which proves $\rho'(x) \geq k = \rho_S(x)$. 

On the other hand if $\rho'(x)= j$ and $\rho_S(x)=k$, then there exists a chain $$x_{s_{i_0}} < x_{s_{i_1}} <\cdots < x_{s_{i_j}} =x,$$ 
in $P$ where $\rho(x_{s_{i_\ell}}) = s_{i_\ell} \in S$ for each $1 \leq \ell \leq j$. Moreover $i_j= k$, and hence $\{i_1, i_2,\ldots, i_j \} \subseteq \{1,2,\ldots, k\}$, which implies $j\leq k$.
\end{proof}

Since submatrices of $\mathrm{TN}$-matrices are $\mathrm{TN}$,  Lemma~\ref{rightr} implies the following result. 
\begin{proposition}
\label{propranksel}
    Let $P$ be a $\mathrm{TN}$-poset, and let $S$ be a set of nonnegative integers. Then $P_S$ is a $\mathrm{TN}$-poset. 
\end{proposition}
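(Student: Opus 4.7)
The plan is to verify the two defining conditions of a $\mathrm{TN}$-poset for $P_S$ separately: first that $P_S$ is quasi-rank uniform, and second that the matrix $R(P_S)$ is totally nonnegative. Both will follow by tracking how the relevant combinatorial counts transport along the rank selection.

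First I would invoke Lemma \ref{rightr}, which identifies the quasi-rank function of $P_S$ with $\rho_S$. Then, for any $x \in P_S$ with $\rho_S(x) = n$ (equivalently $\rho(x) = s_n$), I observe that the principal order ideal of $x$ in $P_S$ is obtained from the principal order ideal of $x$ in $P$ by keeping exactly those elements whose $\rho$-value lies in $S$:
$$
[\zero, x]_{P_S} = \{ z \in [\zero, x]_P : \rho(z) \in S\}.
$$
Combined with Lemma \ref{rightr}, this gives
$$
\bigl|\{ z \in [\zero, x]_{P_S} : \rho_S(z) = k\}\bigr| = \bigl|\{ z \in [\zero, x]_P : \rho(z) = s_k\}\bigr|.
$$
Since $P$ is quasi-rank uniform and $\rho(x) = s_n$, the right-hand side depends only on $s_n$, hence only on $n$. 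This simultaneously shows $P_S$ is quasi-rank uniform and yields the identity $r_{n,k}(P_S) = r_{s_n, s_k}(P)$.

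It follows that $R(P_S)$ is the principal submatrix of $R(P)$ obtained by selecting rows and columns indexed by $\{s_0, s_1, \ldots\}$. Every minor of this submatrix is itself a minor of $R(P)$, so total nonnegativity descends; the diagonal entries $r_{s_n, s_n}(P) = 1$ are preserved, so $R(P_S)$ is a lower triangular $\mathrm{TN}$-matrix with unit diagonal, and $P_S$ is a $\mathrm{TN}$-poset.

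I do not anticipate a serious obstacle here: once Lemma \ref{rightr} is available to reconcile the two quasi-rank functions, the proof reduces to the trivial observation that submatrices of $\mathrm{TN}$-matrices are $\mathrm{TN}$. The only minor point requiring care is keeping the bookkeeping straight between indices in $P$ (which run over $\NN$) and indices in $P_S$ (which run over the positions of elements of $S$).
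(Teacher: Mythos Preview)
Your proposal is correct and follows essentially the same approach as the paper, which simply notes that submatrices of $\mathrm{TN}$-matrices are $\mathrm{TN}$ and invokes Lemma~\ref{rightr}. Your write-up just makes explicit the bookkeeping (quasi-rank uniformity of $P_S$ and the identification $r_{n,k}(P_S)=r_{s_n,s_k}(P)$) that the paper leaves implicit.
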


Let $P$ be a quasi-rank uniform poset with a least element $\zero$. Following \cite{stanley1976binomial}, we define $\mu_S(n)=\mu_{P_S}(\zero, x)$ where $\rho_S(x)=n$, and $\mu_{P_S}$ is the M\"obius function of $P_S$. If $S=\NN$, then we write $\mu_S(n)=\mu(n)$.  The next theorem generalizes \cite[Theorem~2.1]{stanley1976binomial}.

\begin{theorem}\label{mobius}
Let $P$ be a quasi-rank uniform poset with a least element $\zero$, and let $S=\{s_0<s_1< \cdots \}$ a set of nonnegative integers containing $0$. Then 
$$
\mu_S(n) = (-1)^n \det(R[\{s_1,\ldots, s_n\}, \{s_0, \ldots, s_{n-1}\}]). 
$$
\end{theorem}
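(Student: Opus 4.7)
My plan is to deduce this from the determinantal formula of Proposition~\ref{chainform} applied to the rank-selected sub-poset $P_S$.

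First I would verify that $P_S$ is quasi-rank uniform with quasi-rank function $\rho_S$, which is exactly the content of Lemma~\ref{rightr}, and identify its matrix $R(P_S)$ with the principal submatrix $(r_{s_n,s_k})_{n,k\geq 0}$ of $R=R(P)$: indeed, for $y\in P_S$ with $\rho_S(y)=n$, the interval $[\zero,y]_{P_S}$ consists of those $z\in[\zero,y]_P$ with $\rho(z)\in S$, and its elements of $\rho_S$-rank $k$ are precisely the elements of $[\zero,y]_P$ of $\rho$-rank $s_k$, so there are $r_{s_n,s_k}$ of them.

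Next, the recursion \eqref{polreceq} applied to $R(P_S)$ produces the $n$th chain polynomial $p_n^{(S)}(t)$ of $P_S$, as in \eqref{pnchain}. By Remark~\ref{mob}, which is Philip Hall's theorem, evaluating at $t=-1$ gives $p_n^{(S)}(-1)=\mu_{P_S}(\zero,y)=\mu_S(n)$.

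Finally I would invoke Proposition~\ref{chainform} for the matrix $R(P_S)$ at $t=-1$. Since $I-t(R(P_S)-I)\big|_{t=-1}=R(P_S)$, this yields
$$
\mu_S(n)=(-1)^n\det\!\bigl(R(P_S)[\{1,\ldots,n\},\{0,\ldots,n-1\}]\bigr),
$$
and translating the row/column indices of $R(P_S)$ back to the indices $s_i$ in $R$ rewrites the right-hand side as the required minor of $R$. The argument is essentially routine once Proposition~\ref{chainform}, Remark~\ref{mob} and Lemma~\ref{rightr} are in place; the only real task is careful index-bookkeeping in the identification of submatrices, which is not a genuine obstacle.
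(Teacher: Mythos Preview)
Your proposal is correct and follows essentially the same approach as the paper: both reduce to Proposition~\ref{chainform} via Remark~\ref{mob}, using that $R(P_S)$ is the principal submatrix of $R(P)$ indexed by $S$ (which the paper invokes as ``rank selection corresponds to taking principal submatrices of $R$''). The only cosmetic difference is that the paper phrases this as a reduction to the case $S=\NN$, whereas you work directly with $P_S$ throughout; the content is the same.
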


\begin{proof}
It suffices to prove the theorem for $S=\NN$, since rank selection corresponds to taking principal submatrices of $R$. 

By Remark \ref{mob}, $p_n(-1)= \mu(n)$. The theorem now follows from Proposition~\ref{chainform}.  
\end{proof}

Theorem~\ref{mobius} provides a necessary condition for a poset to be $\mathrm{TN}$, namely that the M\"obius function alternates in sign. 

\begin{corollary}\label{altmu}
If $P$ is a $\mathrm{TN}$-poset with a least element $\zero$, then $(-1)^n \mu_S(n) \geq 0$ for all $S$ and $n$. 
\end{corollary}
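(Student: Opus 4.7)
The plan is to read the corollary off Theorem \ref{mobius} combined with the definition of a $\mathrm{TN}$-poset. Theorem \ref{mobius} already delivers the exact formula
\[
\mu_S(n) = (-1)^n \det\bigl(R[\{s_0,\ldots,s_{n-1}\},\{s_1,\ldots,s_n\}]\bigr),
\]
so multiplying both sides by $(-1)^n$ and using $(-1)^{2n}=1$ gives
\[
(-1)^n \mu_S(n) = \det\bigl(R[\{s_0,\ldots,s_{n-1}\},\{s_1,\ldots,s_n\}]\bigr).
\]
The right-hand side is a minor of the matrix $R=R(P)$, and by the hypothesis that $P$ is a $\mathrm{TN}$-poset, $R(P)$ is totally nonnegative. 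By definition of total nonnegativity every minor of $R(P)$ is nonnegative, so the displayed determinant is $\geq 0$, which is precisely the claim.

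There is essentially no obstacle: the two ingredients (the determinantal formula for $\mu_S(n)$ and the nonnegativity of minors of $R(P)$) plug together immediately. One could equivalently invoke Proposition \ref{propranksel} first to reduce to the case $S=\NN$ (since rank selection preserves the $\mathrm{TN}$ property), and then apply Theorem \ref{mobius} with $S=\NN$; but this is not needed, because the relevant minor is already a minor of the original $R(P)$ and hence nonnegative on the nose. In either reading the proof is a one-line deduction, which I would simply write as: ``By Theorem \ref{mobius}, $(-1)^n \mu_S(n)$ equals a minor of $R(P)$, which is nonnegative since $R(P)$ is $\mathrm{TN}$.''
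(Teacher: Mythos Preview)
Your proposal is correct and matches the paper's approach exactly: the corollary is stated immediately after Theorem \ref{mobius} without a separate proof, the implication being precisely what you wrote---that $(-1)^n\mu_S(n)$ is a minor of the totally nonnegative matrix $R(P)$ and hence nonnegative.
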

Let us formulate Theorem~\ref{mobius} and Corollary \ref{altmu} in terms of flag $h$-vectors of quasi-rank uniform $\mathrm{TN}$-posets. Recall that a poset is bounded if it contains a $\zero$ and a $\one$. 
Let $P$ be a finite and bounded poset of rank $n$. The \emph{flag $f$-vector} of $P$ is the function $\alpha_P : 2^{[n-1]} \to \ZZ$ defined by 
$\alpha_P(S)$ is equal to the number of maximal chains of $P_S$. The  \emph{flag $h$-vector} of $P$  is the function $\beta_P : 2^{[n-1]} \to \ZZ$ defined by 
$$
\beta_P(S) = \sum_{T \subseteq S} (-1)^{|S \setminus T|}\alpha_P(T). 
$$
It is known that the flag $h$-vectors of Cohen-Macaulay posets are nonnegative, see \cite{BjornerCM}. The same is true for bounded $\mathrm{TN}$-posets. 
\begin{theorem}
Let $P$ be a quasi-rank uniform and bounded poset of quasi-rank $n$, and let $S=\{s_1,\ldots, s_k\} \subseteq [n-1]$. Then 
$$
\beta_P(S)  = \det(R[\{s_1,\ldots, s_k, n\}, \{0,s_1, \ldots, s_{k}\}]), 
$$
where $R=R(P)$. 

Hence if $P$ is $\mathrm{TN}$, then $\beta_P(S) \geq 0$ for all $S \subseteq [n-1]$. 
\end{theorem}
\begin{proof}
By Philip Hall's theorem,  $\beta_P(S) = (-1)^{n}\mu_{P_{S\cup\{0,n\}}}(\zero, \one)$. The theorem now follows from Theorem~\ref{mobius}.  
\end{proof}

\subsection{$\TN$-posets from $\TN$-matrices} 
Suppose $P$ is a quasi-rank uniform poset with $R(P)= (r_{n,k})_{k,n=0}^N$. Then all diagonal entries of $R(P)$ are equal to one, and $r_{n,k} \leq r_{n+1,k}$ for all $n<N$. The latter is true since if $\rho(x)=n+1$, then there exists $x' < x$ with $\rho(x')=n$. We shall now see that the converse is true.
\begin{theorem}\label{TNTTNP}
Let $N \in \NN \cup\{\infty\}$. 
Suppose $R=(r_{n,k})_{k,n=0}^N$  is a lower {unitriangular matrix} with nonnegative integer entries. If $r_{n,k} \leq r_{n+1,k}$ for all $k\leq n <N$, then there exists a quasi-rank uniform poset $P$, with a largest element $\one$, for which $R=R(P)$. 
\end{theorem}

\begin{proof}
We prove the theorem for $N \in \NN$ first. We prove by induction on $N \in \NN$ that there exists a poset $P_N$ with a greatest element $\one$ for which $R(P_N)=R$. The case when $N=0$ is trivial. Suppose true for some  $N \geq 0$. Consider a matrix $R_{N+1}= (r_{n,k})_{k,n=0}^{N+1}$  satisfying the hypothesis in the statement of the theorem. By induction there is a quasi-rank uniform poset $P_N$ on $X$ with $R(P_N)= (r_{n,k})_{k,n=0}^{N}$. For each $1 \leq n \leq N$, let $x_n \in P_N$ be a specified element of quasi-rank $n$.  Construct a poset $P_{N+1}$ on the disjunct union $X \cup Y_0 \cup Y_1 \cup \cdots \cup Y_{N+1}$, where $|Y_k|=r_{N+1,k}-r_{N,k}$ as follows.
\begin{enumerate}
\item Order $X$ as in $P_N$.
\item Let $Y_{N+1}$ consist of a single element which is declared to be the new largest element $\one_{N+1}$ in $P_{N+1}$.
\item Let $Y_0$ be a set of minimal elements in $P_{N+1}$ which are only related to $\one_{N+1}$. 
\item For $1 \leq n \leq N$, let $Y_n$ be an anti-chain in $P_{N+1}$. If $y \in Y_n$, then $y < \one_{N+1}$ and $x < y$ for all $x < x_n$ are the relations involving $y$. 
\end{enumerate}
It follows that $P_{N+1}$ is quasi-rank uniform and $R(P_{N+1})= R_{N+1}$. 

The case when $N = \infty$ follows from the construction above. Clearly $P_N$ is an order ideal of $P_{N+1}$ for all finite $N$. Hence $P= \cup_{N=0}^\infty P_N$ has the desired properties. 
\end{proof}

\subsection{Binomial and Sheffer posets}
Binomial and Sheffer posets are rank uniform posets  that have been studied frequently in combinatorics \cite{doubilet1972foundations,ehrenborg1995sheffer,reiner1993upper}. \emph{Binomial posets} are rank uniform posets $P$ with a least element $\zero$ for which  
%\begin{itemize}
%\item 
any two intervals of the same length contain the same number of maximal chains. 
%\end{itemize}
The \emph{factorial function} of $P$ is the function $B : \NN \to \NN$, where $B(n)$ is the number of maximal chains in an interval of length $n$ in $P$. 
More generally a \emph{Sheffer poset} is  a rank uniform poset $P$ for which 
\begin{itemize}
\item if $\rho(x)=\rho(y)=n$, then $[\zero, x]$ and $[\zero, y]$ have the same number $D(n)$ of maximal chains,
\item if $[x,u]$ and $[y,v]$, $\zero \not \in \{x,y\}$, are two intervals of the same length $n$, then they have the same number $B(n)$ of maximal chains.
\end{itemize}
In particular, binomial posets are Sheffer posets, with $B=D$.

\begin{example}
    The infinite Boolean algebra of finite subsets of $\NN$ is binomial with factorial function $B(n)=n!$.
\end{example}

\begin{example}
    Let $q$ be a prime power and let $\mathbb{B}(q)$ be the lattice of all finite subspaces of a vector space of infinite dimension over $\FFF_q$. Then $\mathbb{B}(q)$ is a binomial poset with 
    $$B(n)= \mathbf{n} !=  (1 + q + \cdots + q^{n-1})\cdot (1 + q + \cdots + q^{n-2})\cdots (1+q)\cdot 1. $$
\end{example}

\begin{example}
\label{def-r-cube}
    The infinite $r$-cubical lattice, $r \in \mathbb{Z}_{>0}$, is a Sheffer poset see \cite{ehrenborg1996r}. It is defined as the {locally finite part of the} infinite Cartesian product (with a least element $\hat{0}$ adjoined)
    $$
        \mathbf{C}_r=  \left( \prod_{i \geq 1} M_r \right) \sqcup \{ \hat{0} \},  
        $$
        of copies of $M_r$, where $M_r$ is the poset formed from an antichain on $[r]=\{1,\ldots,r\}$ and a largest element denoted $z$, see Fig. \ref{fig:M_r}. {In other words, each $x \in  \mathbf{C}_r \setminus \{\zero\}$ may be written uniquely  as $x=(\xi_1,\xi_2, \ldots)$ where $\xi_i \in M_r$ for each $i$, and $\xi_i \in[r]$ for all but finitely many $i$.} The factorial functions for  $\mathbf{C}_r$ are  $B(n) = n!$ and $D(n)= r^{n-1}(n-1)!$, see \cite{ehrenborg1996r}.

\begin{figure}[H]
    \centering
    \begin{tikzpicture}[line cap=round,line join=round,>=triangle 45,x=1cm,y=1cm]
        \clip(3.0,-1) rectangle (6.6,3);
        \draw [line width=1pt] (5,2)-- (4.5,0);
        \draw [line width=1pt] (5,2)-- (4,0);
        \draw [line width=1pt] (5,2)-- (6,0);
        \begin{scriptsize}
            \draw [fill=xdxdff] (4.5,0) circle (2.5pt);
            \draw [fill=xdxdff] (4,0) circle (2.5pt);
            \draw (4.9,0.2) node[anchor=north west] {$\cdots$};
            \draw [fill=xdxdff] (6,0) circle (2.5pt);
            \draw [fill=ududff] (5,2) circle (2.5pt);
            \draw (4.8,2.5) node[anchor=north west] {$z$};
            \draw (3.8,-0.3) node[anchor=north west] {$1$};
            \draw (4.3,-0.3) node[anchor=north west] {$2$};
            \draw (5.8,-0.3) node[anchor=north west] {$r$};
            \draw (6.4,1) node[anchor=north west] {.};
        \end{scriptsize}
    \end{tikzpicture}
    \caption{The Hasse diagram of $M_r$.}
    \label{fig:M_r}
\end{figure}
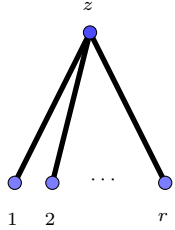
\end{example}

\begin{proposition}
\label{bin-PF}
    Let $P$ be a binomial poset of rank $N \in \NN \cup \{\infty\}$,  with factorial function $B$. Then $P$ is $\mathrm{TN}$ if and only if  the matrix $(1/B(n-k))_{n,k=0}^N$  is $\mathrm{TN}$, where $(1/B)(k) := 0$ if $k<0$.

    Hence if $N=\infty$, then  $P$ is $\mathrm{TN}$ if and only if $\{1/B(n)\}_{n=0}^\infty$is a P\'olya frequency sequence.  
\end{proposition}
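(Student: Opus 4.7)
My plan is to reduce the total nonnegativity of $R(P)$ to that of an explicit Toeplitz matrix via a diagonal conjugation, after which the second statement becomes exactly the definition of a P\'olya frequency sequence.

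First I would identify the entries of $R(P)$ using a standard double-counting argument. Fix $x \in P$ with $\rho(x) = n$ and $0 \le k \le n$. Since $P$ is binomial, the interval $[\zero, x]$ contains $B(n)$ maximal chains, each maximal chain passes through a unique element of rank $k$, and each element $y$ of rank $k$ in $[\zero, x]$ lies on exactly $B(k)\,B(n-k)$ maximal chains of $[\zero,x]$ (using that $[\zero,y]$ and $[y,x]$ are intervals of lengths $k$ and $n-k$, respectively). Double counting yields
$$
r_{n,k}(P) \;=\; \frac{B(n)}{B(k)\, B(n-k)},
$$
and in particular the diagonal entries are $1$, since $B(0)=1$.

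Next I would exhibit the factorization
$$
R(P) \;=\; D\, T\, D^{-1}, \qquad D \;=\; \mathrm{diag}\bigl(B(0), B(1), \ldots, B(N)\bigr),
$$
where $T = \bigl(1/B(n-k)\bigr)_{n,k=0}^N$ is the lower triangular Toeplitz matrix associated to the sequence $\bigl(1/B(n)\bigr)_{n=0}^N$. Since $D$ has strictly positive diagonal entries, for every pair $I, J$ of equal-size index sets one has
$$
\det\bigl(R(P)[I,J]\bigr) \;=\; \frac{\prod_{i \in I} B(i)}{\prod_{j \in J} B(j)} \cdot \det\bigl(T[I,J]\bigr),
$$
a positive rescaling of the corresponding minor of $T$. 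Hence $R(P)$ is $\mathrm{TN}$ if and only if $T$ is $\mathrm{TN}$, proving the first assertion.

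For the case $N = \infty$, by definition the sequence $\bigl(1/B(n)\bigr)_{n=0}^\infty$ is a P\'olya frequency sequence precisely when its associated lower triangular Toeplitz matrix $T$ is $\mathrm{TN}$, so the second assertion is immediate from the first. I do not foresee any substantive obstacle; the only point requiring a word is that in the infinite case $D$ is an infinite diagonal matrix, but since total nonnegativity is a condition on finite minors, the rescaling argument applies verbatim. The essential insight is simply spotting the diagonal conjugation that separates the binomial-coefficient shape of $R(P)$ into its Toeplitz core.
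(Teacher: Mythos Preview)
Your proof is correct and follows essentially the same approach as the paper: both identify $r_{n,k}=B(n)/(B(k)B(n-k))$ and then observe that multiplying on the left and right by positive diagonal matrices preserves (and reflects) total nonnegativity, reducing the question to the Toeplitz matrix $\bigl(1/B(n-k)\bigr)$. Your version is simply more explicit, supplying the double-counting derivation of $r_{n,k}$ and the minor formula, whereas the paper cites the former and states the diagonal-invariance of $\mathrm{TN}$ without detail.
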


\begin{proof}
    The numbers $r_{n,k}$ are given by 
    $$
        r_{n,k}= \frac {B(n)}{B(k) B(n-k)},  
    $$ see \cite{ehrenborg1995sheffer}. 
    Since total nonnegativity is closed under multiplying by diagonal matrices with positive entries on the diagonals, it follows that  $(r_{n,k})_{n,k=0}^\infty$ is $\mathrm{TN}$ if and only if $(1/B(n-k))_{n,k=0}^N$  is $\mathrm{TN}$. 

\end{proof}

\begin{example}\label{boolex}
    The sequence $\{1/n!\}_{n=0}^\infty$ is a P\'olya frequency sequence, since
    $$
        \sum_{n=0}^\infty \frac{1}{n!} x^n = e^x. 
    $$
    Hence, the infinite Boolean algebra is a $\mathrm{TN}$-poset, and the matrix $\left( \binom n k \right)_{n,k=0}^\infty$ is $\mathrm{TN}$.  From \eqref{pnchain} it follows that 
    $$
    p_n(t) = \sum_{k=1}^n k! S(n,k) t^k
    $$
    as claimed in Example \ref{binomet}. 
\end{example}

\subsection{Chain polynomials of (weak) upper homogeneous posets}  Next we will apply Theorem~\ref{maint} to chain enumeration in upper homogeneous posets, a class of posets recently introduced by Stanley \cite{stanley2020upho,stanley2024theorems}. This will give a combinatorial interpretation for the polynomials in Theorem~\ref{PFT} for all integer P\'olya frequency sequencies $(a_i)_{i=0}^\infty$ for which $a_0=1$. A poset $P$ is called \emph{upper homogeneous} (upho) if it is graded, has a least element $\zero$, has finitely many elements of each rank, and is such that each {principal} upper order ideal is isomorphic to the poset itself. For example, $\NN^n$ is an upho poset. 

We take the opportunity to generalize this construction. Recall the definition of the quasi-rank function $\rho$ associated to a poset. 
 A poset $P$ is called \emph{weak upper homogeneous} (wupho) if it 
 %has a least element $\zero$, 
 has finitely many elements of each quasi-rank, and if for {all} $x,y \in P$ with $\rho(x)=\rho(y)$, 
 $$
 |\{z  \in P : x \leq z \mbox{ and } \rho(z)=n \}| =  |\{z  \in P : y \leq z \mbox{ and } \rho(z)=n \}|, 
 $$
for each $n \geq \rho(x)$. Clearly any upho poset is wupho. 

 If $P$ is wupho, define a matrix $\overline{R}=\overline{R}(P)= (\overline{r}_{n,k}(P))_{n,k=0}^N$ by 
$$
\overline{r}_{n,k}(P) =   |\{z  \in P : x \leq z \mbox{ and } \rho(z)=n \}|,
$$
where $x \in P$ is a fixed element of rank $k$. 
 
%Compute Ehrenborg quasi-symmetric function for matrix. 

\begin{proposition}
Let $P$ be a wupho poset, and let $n$ be a positive integer. Then the $n$th chain polynomial of $\overline{R}(P)$, see \eqref{polreceq}, is equal to 
\begin{equation}\label{pnupho}
 p_n(t)=\sum_{j \geq 1} |\{ x=x_0<x_1<\cdots<x_{j-1}<x_j : \rho(x_j) =n\}|t^j,
 \end{equation}
 where $x$ is a fixed minimal element of $P$. We call $p_n(t)$ the $n$th \emph{chain polynomial} of $P$. 
\end{proposition}

\begin{proof}
Let $f_0(t)=1$ and let $f_n(t)$, $n\geq 1$, be the polynomial on the right hand side of \eqref{pnupho}. Then, by conditioning on $\rho(x_{j-1})=k$, 
$$
f_n(t) = t\sum_{k=0}^{n-1}\overline{r}_{n,k}(P) f_k(t),
$$
and hence $\{f_n(t)\}_n$ satisfies the same recursion as $\{p_n(t)\}_n$. 
\end{proof}

\begin{definition}
We say that a wupho poset $P$ is $\overline{\mathrm{TN}}$ if the matrix $\overline{R}(P)$ is totally nonnegative. 
\end{definition}

Hence an upho poset is $\overline{\mathrm{TN}}$ if and only if it is Schur positive as defined in \cite{fu2024monoid}. In this case $\overline{R}(P)$ is a 
Toeplitz matrix $\overline{R}(P)= (r_{n-k}(P))_{n,k=0}^\infty$, where $r_n(P)= |\{x \in P: \rho(x)=n\}|$. 

%The \emph{rank sequence} of $P$ is the sequence  $(r_n(P))_{n=0}^\infty$, where $r_n(P)$ is the number of elements of $P$ of rank $n$. An upho poset is said to be \emph{Schur positive} if $(r_n(P))_{n=0}^\infty$ is a P\'olya frequency sequence, see \cite{fu2024monoid}.
Theorem~\ref{maint} immediately gives:

\begin{theorem}\label{uphot}
Let $P$ be a $\overline{\mathrm{TN}}$ wupho poset. Then, for each $n \geq 0$, all zeros of the $n$th chain polynomial $p_n(t)$ are real and located in the interval $[-1,0]$. Moreover $p_n(t) \prec p_{n+1}(t)$. 
\end{theorem}

Hopkins \cite{hopkins2024upho} constructs upho lattices from certain supersolvable lattices. These are all Schur positive. 

It was proved in \cite{fu2024monoid} that any P\'olya frequency sequence $(r_n)_{n=0}^\infty$ of positive  integers such that $r_0=1$ is the rank sequence $(r_n(P))_{n=0}^\infty$ of an upho poset $P$. Does this extend to wupho posets? 
\begin{problem}\label{TNTUTNP}
Suppose $\overline{R}= (\overline{r}_{n,k})_{n,k=0}^N$ is lower {unitriangular matrix} with nonnegative integer entries. If $\overline{R}$ is $\TN$ and $\overline{r}_{n, k} \geq \overline{r}_{n, k+1}$ for all $0\leq k <n\leq N$, then does there exist a wupho poset $P$ {such that} $\overline{R}(P)=\overline{R}$?
\end{problem}
The analog of Problem \ref{TNTUTNP} for quasi-rank uniform posets is true (even without the $\TN$-assumption) by Theorem \ref{TNTTNP}.

%We will now extend the result in  \cite{fu2024monoid} to wupho posets. Notice that necessary conditions on a matrix $\overline{R}= (\overline{r}_{n,k})_{n,k=0}^N$ to be the matrix $\overline{R}(P)$ associated to a wupho poset of quasi-rank $N$  is that all entries are nonnegative, all diagonal entries are equal to one, and $\overline{r}_{n, k} \leq \overline{r}_{n, k+1}$ for all $0\leq k <n\leq N$. The converse is also true:

%\begin{theorem}
%Let $N \in \NN \cup\{\infty\}$. Let  $\overline{R}= (\overline{r}_{n,k})_{n,k=0}^N$ be a matrix with nonnegative integer entries and all diagonal entries equal to one. If $\overline{r}_{n, k} \leq \overline{r}_{n, k+1}$ for all $0\leq k <n\leq N$, then there exists a wupho poset $P$, with a least element $\zero$, of quasi-rank $N$ for which $\overline{R}(P)=\overline{R}$. 
%\end{theorem}
%
%\begin{proof}
%We prove the theorem for the case when $N \in \NN$ first. The proof is by induction, the case when $N=0$ being trivial.
%
%Suppose true for some $N \in \NN$. Let $\overline{R} = \overline{R}_{N+1}= (\overline{r}_{n,k})_{n,k=0}^{N+1}$ be a matrix as in the statement of the theorem. By induction there exists a wupho poset $P_N$, with a least element, associated to $\overline{R}_{N}= (\overline{r}_{n,k})_{n,k=0}^{N}$. Construct $P_{N+1}$ by adding 
%
%\end{proof}

\section{Rank uniform posets with real-rooted chain polynomials}
\label{section-subdivision}

For simplicial complexes, it is often convenient to study the $h$-vector instead of the $f$-vector of the simplicial complex. Let $\Delta$ be an abstract simplicial complex (we consider the empty set a face of $\Delta$). The $f$-\emph{polynomial} of $\Delta$ is defined by 
$$
f_\Delta(t) = \sum_{S \in \Delta}t^{\# S}.
$$
If $\Delta$ has dimension $n-1$, then the $h$-\emph{vector} of $\Delta$ may be defined as the sequence $(h_0(\Delta), h_1(\Delta), \ldots, h_n(\Delta))$ for which 
$$
    f_\Delta(t) = \sum_{k=0}^n h_k(\Delta) t^k(1+t)^{n-k}.
$$
Hence the polynomials $t^k(1+t)^{n-k}$ play a crucial role in the study of chain polynomials of simplicial complexes. Brenti and Welker \cite{brenti2008f} proved that if the $h$-vector of a simplicial complex $\Delta$ is nonnegative, then the chain polynomial of $\Delta$ has only real zeros. Notice that the polynomials $t^k(1+t)^{n-k}$ are the polynomials $R_{n,k}(t)$ associated to the $\mathrm{TN}$ matrix $\left( \binom n k \right)_{n,k=0}^\infty$, which is the matrix $R(\mathbb{B})$ associated to the $\mathrm{TN}$-poset $\mathbb{B}$. In this section we will prove a vast generalization of Brenti and Welker's result, and in the process define natural analogs of $h$-vectors for all $\mathrm{TN}$-posets.

In this section all posets are assumed to have a least element $\zero$. 

\begin{definition}%[$P$-positive poset]
Let $P$ be a quasi-rank uniform $\mathrm{TN}$-poset and let $(R_{n,k}(t))_{0\leq k \leq n \leq N}$ be the array of polynomials associated to the matrix $R= R(P)$, see Definition \ref{resolv} and Remark~\ref{thethe}.
Let   $Q$ be a quasi-rank uniform  poset of rank $r$.
\begin{itemize}
\item $Q$ is a \emph{$P$-poset} if for each $x \in Q$ there is a $y \in P$ such that  
$\{z \in Q : z \leq x\}$ is isomorphic to  $\{z \in P : z \leq y\}$. 
\item $Q$ is a \emph{combinatorial $P$-poset} if for each $y \in Q$, 
\begin{equation}\label{QaR}
\sum_{x \leq_Q y} t^{\nu(x)}= R_{m,0}(t), \ \ \ \ m=\nu(y),
\end{equation}
where $\nu$ is the quasi-rank function of $Q$.
\item A combinatorial $P$-poset $Q$ of rank $r$ is said to be \emph{$P$-positive} if its
rank generating polynomial,
$$
f_{Q}(t)= \sum_{x \in Q} t^{\nu(x)},
$$  has a nonnegative expansion in the polynomials $\{R_{r,k}(t)\}_{k=0}^{r}$, i.e., 
$$
f_{Q}(t) = \sum_{k=0}^r h_k(Q) \cdot R_{r,k}(t), 
$$
where $h_k(Q) \geq 0$ for all $0 \leq k \leq r$. The  \emph{$h$-vector} of $Q$ is the  vector $(h_0(Q), h_1(Q), \ldots, h_r(Q))$.  
\end{itemize}
%\footnote{The polynomials $\{R_{n,k}(t)\}_{k=0}^{n}$ are not always unique, but we only require nonnegative expansion in some choice of $\{R_{n,k}(t)\}_{k=0}^{n}$.}
\end{definition}
\begin{remark}
Clearly any $P$-poset is a combinatorial $P$-poset. 
Notice that a combinatorial $P$-poset may fail to have an $h$-vector, the reason being that $\{R_{r,k}(t)\}_{k=0}^r$ does not have to be a basis for the space of polynomials of degree at most $r$. However, for the $\mathrm{TN}$-posets considered in this paper it will be a basis. 
\end{remark}
\begin{example}
Examples of $\mathbb{B}$-positive posets are boolean cell complexes with nonnegative $h$-vectors, for example Cohen-Macaulay simplicial complexes.  
\end{example}

If $P$ is a finite poset, let $\hat{P}$ be the poset obtained by adding a new largest element $\one$ to $P$. If $P$ is quasi-rank uniform, then so is $\hat{P}$. 

\begin{theorem}\label{TN-Ppos}
Suppose $P$ is a $\mathrm{TN}$-poset, and that $Q$ is a (combinatorial) $P$-poset. Then $Q$ is $P$-positive if and only if $\hat{Q}$ is $\mathrm{TN}$. 
\end{theorem}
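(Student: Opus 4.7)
The plan is to compare the matrix $R(\hat Q)$ directly to the resolvable structure attached to $R(P)$, with the adjunction of $\one$ contributing exactly one new row.

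First I would check that $\hat Q$ is again quasi-rank uniform: the element $\one$ has quasi-rank $r+1$, while the quasi-rank of every $x \in Q$ is unchanged. So $R(\hat Q)$ is a well-defined lower triangular matrix of size $(r+2) \times (r+2)$ with ones on the diagonal. The (combinatorial) $P$-poset hypothesis says that for each $y \in Q$ with $\rho_Q(y) = n$, the row generating polynomial of $[\zero, y]_Q$ is $R_{n,0}(t) = R_n(t)$. Hence the first $r+1$ rows of $R(\hat Q)$ agree with the first $r+1$ rows of $R(P)$, while the bottom row, indexed by $\one$, has row polynomial $\sum_{x \in \hat Q} t^{\rho(x)} = f_Q(t) + t^{r+1}$.

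Next I would invoke Theorems~\ref{WL} and \ref{eqcon} to convert ``$R(\hat Q)$ is $\mathrm{TN}$'' into ``there exists a nonnegative resolution $R(\hat Q) = R(\Gamma_{r+1}, \lambda)$''. Because the first $r+1$ rows of $R(\hat Q)$ coincide with those of the $\mathrm{TN}$ matrix $R(P)$, uniqueness of the resolution (Theorem~\ref{WL}, under \eqref{limp}) forces $\lambda_{n,k}$ for $0 \le k \le n < r$ to equal the corresponding weights of the resolution of $R(P)$, which are already nonnegative since $P$ is $\mathrm{TN}$. The only genuinely new weights are $\lambda_{r,0}, \lambda_{r,1}, \ldots, \lambda_{r,r}$.

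The key step is then the recursion \eqref{r-sum} specialized to $n=r$, $k=0$, which reads
\[
  R_{r+1,0}(t) = t^{r+1} + \sum_{j=0}^{r} \lambda_{r,j}\, R_{r,j}(t).
\]
Matching this against the bottom row polynomial $f_Q(t) + t^{r+1}$ of $R(\hat Q)$ gives
\[
  f_Q(t) = \sum_{j=0}^{r} \lambda_{r,j}\, R_{r,j}(t),
\]
so a nonnegative choice of the $\lambda_{r,j}$ is exactly an expansion witnessing that $Q$ is $P$-positive, with $h_j(Q) = \lambda_{r,j}$. Both directions of the equivalence follow: from $P$-positivity of $Q$ one defines $\lambda_{r,j} := h_j(Q) \ge 0$ and verifies via Lindström--Gessel--Viennot that the resulting $R(\Gamma_{r+1}, \lambda)$ equals $R(\hat Q)$; conversely, any nonnegative resolution of $R(\hat Q)$ produces such an expansion.

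The main obstacle I expect is the bookkeeping needed to cleanly identify the bottom row of $R(\hat Q)$ with the one new slot of the extended resolvable system, and in particular to argue that the resolution of the top $(r+1)\times(r+1)$ block extends \emph{uniquely} to the full matrix once the last row polynomial is fixed. Everything else is a direct application of Theorems~\ref{WL} and \ref{eqcon} together with the recursion \eqref{r-sum}.
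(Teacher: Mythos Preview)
Your proposal is correct and follows essentially the same approach as the paper: both arguments use the equivalence of total nonnegativity and resolvability (Theorem~\ref{eqcon}), observe that the first $r+1$ rows of $R(\hat Q)$ coincide with those of $R(P)$ so that the associated resolving polynomials $R_{n,k}(t)$ agree for $n\le r$, and then read off from \eqref{r-sum} that the bottom-row identity $f_Q(t)+t^{r+1}=\hat R_{r+1,0}(t)$ is exactly the statement $f_Q(t)=\sum_{j=0}^r \lambda_{r,j}R_{r,j}(t)$ with $\lambda_{r,j}=h_j(Q)$. The only cosmetic difference is that the paper constructs the polynomials $\hat R_{r+1,k}(t)$ directly to exhibit resolvability, whereas you phrase the converse via $R(\Gamma_{r+1},\lambda)$ and Lindstr\"om--Gessel--Viennot; these are equivalent.
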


\begin{proof}
Suppose $\hat{Q}$ is $\mathrm{TN}$ of rank $n+1$. Then the matrix $R(\hat{Q})$ is resolvable by Theorem~\ref{eqcon}. Let $\{\hat{R}_{k,j}(t)\}$ be the polynomials that resolve $R(\hat{Q})$, and $\{R_{k,j}(t)\}_{k,j}$ the polynomials that resolve $R(P)$. Then $\hat{R}_{k,j}(t)= R_{k,j}(t)$ whenever $k \leq n$, and 
$$
f_{Q}(t)= \hat{R}_{n+1,0}(t)- t^{n+1}= \sum_{j=0}^n \lambda_{n,j}(\hat{Q}) \cdot R_{n,j}(t),
$$
by \eqref{r-sum}. Hence $Q$ is $P$-positive. 

Conversely if $Q$ is $P$-positive, then 
$$
f_{Q}(t)= \sum_{j=0}^n h_j \cdot R_{n,j}(t), 
$$
for nonnegative numbers $h_j$. Define $\lambda_{k,j}(\hat{Q}) = \lambda_{k,j}(P)$ for $0\leq j \leq k \leq n-1$, and $\lambda_{n,j}(\hat{Q})=h_j$ for $0 \leq j \leq n$. Define further $\hat{R}_{k,j}(t)=R_{k,j}(t)$ for $0\leq j \leq k \leq n$, and 
$$
 \hat{R}_{n+1,k}(t)= t^{n+1}+ \sum_{j \geq k} h_jR_{n,j}(t), \ \ \ \ 0\leq k \leq n.
$$
It follows that $R(\hat{Q})$ is resolvable. 
\end{proof}

\begin{example}
Suppose $P$ is a quasi-rank uniform $\mathrm{TN}$-poset and $y \in P$ has quasi-rank $n$. For $d \leq n$ consider the poset 
$$
Q=\{x \leq y : \rho(x) \leq d\}. 
$$
Then $Q$ is $P$-positive by Theorem~\ref{TN-Ppos} and Proposition~\ref{propranksel}.
\end{example}

The \emph{chain polynomial} of a finite poset $Q$ is the generating polynomial
$$
c_Q(t)= \sum_{C}t^{|C|}
$$
of the collection of all chains $C$ in $Q$. Notice that if $Q$ is quasi-rank uniform of rank $n$, and has a least element $\zero$, then 
$$
c_Q(t)= t^{-1}(1+t) p_{n+1}(t),
$$
where $p_{n+1}(t)$ is the $(n+1)$st chain polynomial of $\hat{Q}$. 

Theorem~\ref{rposp} below is a vast generalization of the theorem of Brenti and Welker \cite{brenti2008f}, who proved  the analog of Theorem~\ref{rposp} for the case when $P= \mathbb{B}$. 
\begin{theorem}
\label{rposp}
Let $P$ be a $\mathrm{TN}$-poset with a least element $\zero$. 
    All zeros of the chain polynomial of any $P$-positive poset $Q$ are located in the interval $[-1,0]$.
\end{theorem}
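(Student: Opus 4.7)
The plan is to reduce Theorem \ref{rposp} directly to Theorem \ref{TN-poset-chain} applied to the augmented poset $\hat{Q}$, using Theorem \ref{TN-Ppos} as the bridge.

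First I would invoke Theorem \ref{TN-Ppos}: since $P$ is a $\mathrm{TN}$-poset and $Q$ is $P$-positive, the poset $\hat{Q}$ obtained by adjoining a new largest element $\one$ is itself a $\mathrm{TN}$-poset. Let $n$ denote the rank of $Q$, so that $\hat{Q}$ has rank $n+1$. Applying Theorem \ref{TN-poset-chain} to $\hat{Q}$, the chain polynomial $p_{n+1}(t)$ of $\hat{Q}$ is real-rooted with all zeros located in $[-1,0]$. Moreover, $p_{n+1}(t)$ is divisible by $t$: the defining recursion \eqref{polreceq} forces $p_k(t)$ to have $t$ as a factor for every $k\geq 1$.

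Finally I would invoke the identity
$$
c_Q(t) = t^{-1}(1+t)\, p_{n+1}(t),
$$
recorded in the paragraph preceding the theorem. Dividing out the factor $t$ from $p_{n+1}(t)$ produces a polynomial whose roots are precisely the nonzero roots of $p_{n+1}(t)$, which all lie in $[-1,0]$. Multiplying by $(1+t)$ adjoins the additional root $-1$, again inside $[-1,0]$. Hence every zero of $c_Q(t)$ lies in $[-1,0]$, as required.

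There is no serious obstacle: the statement is essentially a packaging of the earlier theorems, and the substantive work was carried out in establishing Theorem \ref{mainUTP}, Theorem \ref{maint}, and Theorem \ref{TN-Ppos}. The only care needed is verifying the divisibility of $p_{n+1}(t)$ by $t$ (immediate from the recursion) and interpreting the identity relating $c_Q(t)$ to the chain polynomial of $\hat{Q}$ correctly; both are routine.
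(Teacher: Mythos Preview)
Your proposal is correct and follows exactly the approach of the paper, which simply records that the result ``follows from Theorems \ref{TN-poset-chain} and \ref{TN-Ppos}.'' You have merely unpacked those two citations and made explicit the role of the identity $c_Q(t)=t^{-1}(1+t)\,p_{n+1}(t)$ and the divisibility of $p_{n+1}(t)$ by $t$, both of which are routine as you note.
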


\begin{proof}
    Follows from Theorems \ref{TN-poset-chain} and \ref{TN-Ppos}.
\end{proof}

The next theorem is an immediate consequence of Theorems \ref{propranksel} and \ref{TN-Ppos}. 

\begin{theorem}
\label{rpossel}
Let $P$ be a $\mathrm{TN}$-poset with  a least element $\zero$, and let $S$ be an infinite set of nonnegative integers containing $0$. If $Q$ is a  $P$-positive poset, then $Q_S$ is $P_S$-positive. 
\end{theorem}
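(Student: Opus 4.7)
The plan is to apply Theorem~\ref{TN-Ppos} twice, using Proposition~\ref{propranksel} as a bridge. First, since $Q$ is $P$-positive, Theorem~\ref{TN-Ppos} yields that $\hat{Q}$ is a $\mathrm{TN}$-poset. Writing $S=\{s_0<s_1<\cdots\}$ and letting $r$ be the rank of $Q$, the adjoined element $\one$ has quasi-rank $r+1$ in $\hat{Q}$, so one can identify
\[
\widehat{Q_S}=(\hat{Q})_{S'}, \qquad S':=(S\cap\{0,1,\ldots,r\})\cup\{r+1\},
\]
since both posets consist of $Q_S$ together with a new top element placed above everything else. Applying Proposition~\ref{propranksel} to $\hat{Q}$ with selection set $S'$ (which contains $0$) then gives that $\widehat{Q_S}$ is $\mathrm{TN}$, and applying it to $P$ with selection set $S$ gives that $P_S$ is a $\mathrm{TN}$-poset.

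Next I would verify that $Q_S$ is a combinatorial $P_S$-poset. By Lemma~\ref{rightr}, an element $y\in Q_S$ with induced quasi-rank $\nu_S(y)=k$ is exactly one with $\nu(y)=s_k$ in $Q$. The combinatorial $P$-poset property of $Q$ then reads
\[
\sum_{x\leq_Q y}t^{\nu(x)}=R(P)_{s_k,0}(t)=\sum_{j=0}^{s_k}r_{s_k,j}(P)\,t^{j},
\]
and restricting the sum to $Q_S$ while relabelling ranks via $S$ yields
\[
\sum_{x\leq_{Q_S} y}t^{\nu_S(x)}=\sum_{j=0}^{k}r_{s_k,s_j}(P)\,t^{j}=R(P_S)_{k,0}(t),
\]
since $R(P_S)$ is precisely the submatrix of $R(P)$ with rows and columns indexed by $S$. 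This establishes the combinatorial $P_S$-poset identity for $Q_S$.

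Finally, the reverse direction of Theorem~\ref{TN-Ppos}, applied with base poset $P_S$ to the combinatorial $P_S$-poset $Q_S$, converts the $\mathrm{TN}$-ness of $\widehat{Q_S}$ into the $P_S$-positivity of $Q_S$. The only real content beyond unwinding definitions is the identification $\widehat{Q_S}=(\hat{Q})_{S'}$ and the compatibility of rank selection with the matrix $R(\cdot)$; both are direct consequences of Lemma~\ref{rightr}, so the theorem is genuinely an immediate corollary of Theorem~\ref{TN-Ppos} and Proposition~\ref{propranksel}.
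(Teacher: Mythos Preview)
Your proof is correct and follows exactly the route the paper intends: the paper states that the theorem ``is an immediate consequence of Theorems~\ref{propranksel} and~\ref{TN-Ppos}'', and you have spelled out precisely that argument---pass from $P$-positivity of $Q$ to $\mathrm{TN}$-ness of $\hat{Q}$ via Theorem~\ref{TN-Ppos}, rank-select using Proposition~\ref{propranksel} and the identification $\widehat{Q_S}=(\hat{Q})_{S'}$, and then return via the other direction of Theorem~\ref{TN-Ppos}. Your explicit verification that $Q_S$ is a combinatorial $P_S$-poset (needed to invoke Theorem~\ref{TN-Ppos} in the reverse direction) is a detail the paper leaves implicit but which is indeed routine from Lemma~\ref{rightr}.
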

Hence if $P$ is a quasi-rank uniform $\mathrm{TN}$-poset with a least element $\zero$ and $Q$ is a $P$-positive poset, then the chain polynomial of any rank-selected {subposet} of $Q$ is real-rooted. 

\begin{example}
For the case when $P = \mathbb{B}$, Theorems \ref{rposp} and \ref{rpossel}  imply that if $Q$ is a boolean poset (or a simplicial complex) with nonnegative $h$-vector, then the chain polynomial of any rank selected {subposet} of $Q$ is real-rooted. This was first proved in \cite{athanasiadis2023two}.  
\end{example}

\subsection{Shellings of complexes and posets} 
There are several different definitions of shellings in the literature. The most common, the one for (abstract) simplicial complexes, reads as follows. Recall that an \emph{abstract simplicial complex} on a finite set $V$ is a collection $\Delta$ of subsets (called \emph{faces}) of $V$ which is closed under containment and contains $\{v\}$ for each $v \in V$. The \emph{dimension} of a face $S$ is  the number of elements in $S$ minus one, and the dimension of $\Delta$ is defined as the maximum dimension of a face of $\Delta$.   A simplicial complex is \emph{pure} if all maximal faces have the same dimension. Let $\Delta$ be a pure simplicial complex of dimension $d\geq 0$. An ordering $F_1, \ldots, F_m$ of the facets of $\Delta$ is a shelling if for each $j >1$, 
$$
\langle F_j \rangle \cap \bigcup_{i<j} \langle F_j \rangle 
$$
is pure of dimension $d-1$. If a {pure} simplicial complex has a shelling, then its $h$-vector is nonnegative. Can we generalize this to the notion of $h$-vector given in this paper? For this we should use suitable versions of shellings for the posets considered. 

If $P$ is a finite poset, then the \emph{order complex} $\Delta(P)$ is defined as the simplicial complex of all chains in $P$. Hence one may define shellability of posets via its order complex. Bj\"orner and Wachs \cite{BjornerCM, Bjorner-Wachs} considered versions of shellability of posets $P$ that imply shellability of $\Delta(P)$, namely EL-shellable and CL-shellable posets. For face-posets of polytopal complexes or regular CW-complexes the following notion of shellings, called \emph{C-shellings} in \cite{Stanley-94}, is often used. A finite poset $P$ is \emph{pure} if all maximal chains have the same length. We call the maximal elements of $P$ for \emph{facets}. 
\begin{definition}\label{CL-def}
Let $P$ be a pure poset  with a least element $\zero$.  An ordering $x_1, x_2, \ldots, x_m$ of the facets of $P$ is a \emph{C-shelling} if $P$ has rank at most one, or the rank of $P$ is at least two, and 
\begin{itemize}
%\item $[\zero, x_1 )$ has a CL*-shelling, and  
\item[(S1)] for each $j > 1$, the {subposet} 
$$
H_j = \langle x_j \rangle \cap \bigcup_{i<j} \langle x_i \rangle  
$$
of $\langle x_j \rangle$ is pure of rank $\rho(P)-1$, and 

\item[(S2)] for each $j$, $[\zero, x_j)=\{x \in P: x <x_j\}$ has a C-shelling. Moreover, if $j>1$ then $[\zero, x_j)$ has a C-shelling for which the facets of $H_j$
come first. 
\end{itemize}
\end{definition}
\begin{remark}\label{Cshell-Dshell}
That $P$ has a C-shelling  is equivalent to that the dual of $P \cup \{\one\}$, where $\one$ is a new greatest element,  has a so called \emph{recursive atom ordering}, see \cite{Bjorner-Wachs}. By \cite[Theorem 3.2]{Bjorner-Wachs} this is equivalent to that the dual of $P$ is CL-shellable. In particular C-shellability of $P$ implies shellability of $\Delta(P)$. 
\end{remark}

\begin{remark}\label{C-shell-shell}
Recall that a finite lattice $L$ is \emph{lower semimodular} if for all $x,y \in L$, if $x \vee y$ covers $x$ and $y$, then $x \wedge y$ is covered by $x$ and $y$.  If $P$ is a lower semimodular lattice and $x \in P$, then any ordering of the facets of $[\zero, x)$ is a shelling of $[\zero, x)$, because then 
$x_i \wedge x_j$ is covered by $x_i$ and $x_j$ for any facets $x_i$ and $x_j$ of $[\zero, x)$. Hence for posets in which all principal order ideals are lower semimodular lattices, then shellability is equivalent to that just (S1) above is satisfied. In particular for face posets of simplicial complexes, the notions of C-shellability and shellability agree.  
\end{remark}

\section{Subspace lattices and the critical problem of Crapo and Rota}
\label{sec-critical}
\newcommand{\LLL}{\mathcal{L}}
Let $\FFF_q$ be a finite field {with $q$ elements}, and let $\mathbb{B}_d(q)$ be the lattice of all subspaces of $\FFF_q^d$. Let further $\mathbb{B}(q)$ be the lattice of all finite dimensional subspaces of the vector space $\{ f \in \FFF_q^\NN: f(i)=0 \mbox{ for all but finitely many } i \}$. Then $\mathbb{B}(q)$ is a binomial poset, and if $y \in \mathbb{B}(q)$ has rank (dimension) $n$, then 
$$
|\{ x \leq y : \rho(x)=k \}| = {\binom n k}_{\! \! \! q},
$$
where ${\binom n k}_{\! q}$ is a $q$-binomial number \cite[Chapter 1.7]{stanley2011enumerative}. Rota introduced \emph{$q$-posets} in \cite{Rota71}, and Alder studied them further in~\cite{alder2010q}. In our terminology $q$-posets are the same as  $\mathbb{B}(q)$-posets.

Let $P$ be a $q$-poset. 
If $\rho(y)=n$, then 
$$
    R^q_n(t)= \sum_{x \leq y} t^{\rho(x)} = \sum_{k=0}^n {\binom n k}_{\! \! \! q} t^k. 
$$
These polynomials are known as the Rogers-Szeg\H{o} polynomials \cite[Chapter 3]{szego1926beitrag}; they satisfy the recursion ${R}^q_0 (t) =1$, and 
\begin{equation}\label{RoSz-eq}
  {R}_{n+1}^q(t) = t{R}_n^q(t)+ {R}_n^q(qt), \ \ \ \ \mbox{ for } n \geq 0. 
\end{equation}
Let $\alpha \colon \RR[t] \to \RR[t]$ be the diagonal linear operator defined by 
$
    \alpha(f)(t) = f(qt). 
$
 {Then by the recursion \eqref{RoSz-eq},}
$
    {R}_n^q(t) = (t+\alpha)^n 1, 
$
which by Lemma~\ref{eqcon} proves the well known fact that the matrix 
$\left({\binom n k}_{\!  q}\right)_{n,k=0}^\infty$
is $\mathrm{TN}$ for $q>0$. 

Recall that 
$
    R^q_{n,k}(t)= (t+\alpha)^{n-k}t^k 1.   
$
Lemma~\ref{eqcon} then implies the following recursion. 
\begin{proposition}
\label{SSRrec}
    If $0\leq k < n+1$, then 
    \begin{equation}\label{RS1}
        R^q_{n+1,k+1}(t)=R^q_{n+1,k}(t) - q^kR^q_{n,k}(t). 
    \end{equation}
\end{proposition}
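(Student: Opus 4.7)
The plan is a short one-line computation using the operator representation $R^q_{n,k}(t) = (t+\alpha)^{n-k} t^k$ provided by Theorem \ref{eqcon}. First I would note that $\alpha$ is diagonal on monomials, with $\alpha(t^k) = (qt)^k = q^k t^k$. The identity then falls out by factoring a common $(t+\alpha)^{n-k}$ on the left:
\begin{align*}
R^q_{n+1,k}(t) - R^q_{n+1,k+1}(t) &= (t+\alpha)^{n-k}\bigl[(t+\alpha)t^k - t\cdot t^k\bigr] \\
&= (t+\alpha)^{n-k}\,\alpha(t^k) \\
&= q^k (t+\alpha)^{n-k} t^k \;=\; q^k R^q_{n,k}(t),
\end{align*}
which rearranges to \eqref{RS1}.

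Alternatively, one can invoke the ``moreover'' clause of Theorem \ref{eqcon} directly: since the eigenvalue of $\alpha$ on $t^k$ is $q^k$ independent of $i$, we have $\alpha_{i,k} = q^k$, and hence $\lambda_{n,k} = \alpha_{n+1-k,k} = q^k$. The Pascal-type recursion \eqref{r-pasc} in Definition \ref{resolv} then reads $R^q_{n+1,k}(t) = R^q_{n+1,k+1}(t) + q^k R^q_{n,k}(t)$, which is the desired identity.

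There is no substantial obstacle here; the content of the proposition is essentially bookkeeping for how the resolving polynomials of the $q$-binomial matrix specialize. The only thing worth checking is that the operator factorization of $R^q_{n,k}$ truly comes from the machinery of Theorem \ref{eqcon}, since $\alpha$ is a single operator rather than a sequence $\alpha_1,\alpha_2,\ldots$; but this is fine because the diagonal values $\alpha_{i,k} = q^k$ do not depend on $i$, so all $\alpha_i$ may be taken equal to $\alpha$.
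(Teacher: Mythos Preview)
Your proposal is correct and matches the paper's approach exactly: the paper simply states the proposition as a consequence of Theorem~\ref{eqcon}, and both of your routes---the direct factorization $(t+\alpha)^{n-k}[(t+\alpha)t^k - t\cdot t^k] = q^k(t+\alpha)^{n-k}t^k$ and the identification $\lambda_{n,k}=\alpha_{n+1-k,k}=q^k$ in \eqref{r-pasc}---are precisely the content of that theorem specialized to $\alpha_i\equiv\alpha$.
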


\begin{remark}
It is not hard to prove using \eqref{RS1} that $R^q_{n,k}(t)= q^{k(n-k)} t^k R^q_{n-k}(q^{-k}t)$, but since we will not use it here we leave the proof to the reader. 
\end{remark}

Denote by $\EE_q$, the subdivision operator associated to $({\binom n k}_{\! q})_{n,k=0}^\infty$, see Definition~\ref{subop}. Notice that the definition of $R^q_{n,k}(t)$ makes sense for any nonnegative real number $q$. 
\begin{theorem}
\label{mainssl}
    Let $q$ be a nonnegative real number. The sequence $\{R^q_{n}(t)\}_{n=0}^\infty$ is resolvable. In particular, for each $n \geq 0$, $\{\EE_q(R^q_{n,k}(t))\}_{k=0}^n$ is an interlacing sequence of polynomials whose zeros lie in $[-1,0]$. 
\end{theorem}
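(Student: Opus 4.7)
The plan is to observe that everything needed is already packaged in Theorem \ref{eqcon} and Theorem \ref{mainUTP}; the proof reduces to checking that the Rogers--Szeg\H{o} recursion fits the hypotheses of condition (2) of Theorem \ref{eqcon}.

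First, I would write $R^q_n(t) = (t+\alpha)^n 1$, where $\alpha : \RR[t] \to \RR[t]$ is the diagonal operator with $\alpha(t^k) = q^k t^k$. Taking $\alpha_i = \alpha$ for every $i \geq 1$, the coefficients $\alpha_{i,k} = q^k$ are nonnegative whenever $q \geq 0$, so condition (2) of Theorem \ref{eqcon} holds with this single operator playing the role of each $\alpha_i$. Applying the equivalence in Theorem \ref{eqcon} then gives both that the matrix $\left(\binom{n}{k}_q\right)_{n,k \geq 0}$ is resolvable (condition (1)) and that it is $\mathrm{TN}$ (condition (3)). Moreover the ``moreover'' clause of Theorem \ref{eqcon} identifies the resolving polynomials as
\[
R^q_{n,k}(t) = (t+\alpha)^{n-k} t^k,
\]
which matches the formula for $R^q_{n,k}(t)$ already introduced before the statement, and the resolving array is $\lambda_{n,k} = \alpha_{n+1-k,k} = q^k$.

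Having established that the matrix is $\mathrm{TN}$, the interlacing claim is immediate from Theorem \ref{mainUTP}: for each $n$, the sequence $\{\EE_q(R^q_{n,k}(t))\}_{k=0}^n$ is an interlacing sequence of polynomials all of whose zeros lie in $[-1,0]$.

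There is essentially no obstacle here, since the general theorems do all the work; the only thing worth a remark is that the argument covers the boundary case $q=0$ too, because nonnegativity of the diagonal entries $q^k$ is all that is required for the application of Theorem \ref{eqcon}. I would therefore keep the proof to just a few lines: verify the operator form, invoke Theorem \ref{eqcon} to get resolvability and total nonnegativity, and then apply Theorem \ref{mainUTP} for the interlacing conclusion.
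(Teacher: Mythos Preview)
Your proposal is correct and follows essentially the same route as the paper: the paper's proof simply cites the operator identity $R^q_n(t)=(t+\alpha)^n1$ established just before the statement, invokes Theorem~\ref{eqcon} (via ``the discussion above'') to conclude resolvability, and then applies Theorem~\ref{mainUTP}. Your explicit identification of $\lambda_{n,k}=q^k$ and the remark on $q=0$ are fine elaborations but not departures from the paper's argument.
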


\begin{proof}
 The theorem follows from Theorem~\ref{mainUTP} since  $({\binom n k}_{\! q})_{n,k=0}^\infty$ is $\mathrm{TN}$. 
\end{proof}

\subsection{The critical problem and expansions of characteristic polynomials}
To apply Definition \ref{resolv} and Theorem~\ref{mainssl}, we want to have an understanding for when $q$-posets are $\mathbb{B}(q)$-positive. It turns out that this is closely related to positivity questions regarding characteristic {polynomials} of hyperplane arrangements in $\FFF_q^n$, and the \emph{Critical Problem} of Crapo and Rota \cite[Chapter 16]{crapo1970foundations}. 

 Let $\HH = \{H_1,\ldots, H_m\}$ be a collection (list) of hyperplanes in $\mathbb{B}_n(q)$.  
    Recall that the \emph{characteristic polynomial} of $\HH$ is 
$$
    \chi_\HH(t) = \sum_{A \subseteq \{1,\ldots,m\}} (-1)^{|A|}  t^{\dim\left( \cap_{i \in A} H_i\right)}.
$$

For $0\leq k \leq n$ and $q \in \RR$, define polynomials $\chi^q_{n,k}(t)$ by $\chi^q_{n,0}(t)=t^n$, and 
$$
\chi^q_{n,k}(t)= t^{n-k}(t-1)(t-q)\cdots (t-q^{k-1}),
$$
otherwise. The reason why these polynomials are important for us is the next lemma.  Define a bijective linear operator $R^q : \RR[t] \to \RR[t]$ by $R^q(t^n)=R_n^q(t)$ for each $n \in \NN$.  
\begin{lemma}\label{chitor}
For each $0 \leq k \leq n$, 
$
R^q(\chi_{n,k}^q)(t) =R^q_{n,k}(t).
$
\end{lemma}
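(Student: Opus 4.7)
The plan is to induct on $k$, showing that both $R^q(\chi^q_{n,k})$ and $R^q_{n,k}$ satisfy the same recurrence with the same initial data; since $R^q$ is linear, this forces them to agree.

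The base case is immediate. For $k=0$, we have $\chi^q_{n,0}(t)=t^n$, so $R^q(\chi^q_{n,0})(t)=R^q(t^n)=R^q_n(t)=R^q_{n,0}(t)$, using the convention of Definition~\ref{resolv}.

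The key point is that the polynomials $\chi^q_{n,k}(t)$ satisfy exactly the recurrence \eqref{RS1} that $R^q_{n,k}(t)$ does. Indeed, from the definition,
$$
\chi^q_{n+1,k}(t)-q^k\chi^q_{n,k}(t) = t^{n-k}(t-q^k)\prod_{j=0}^{k-1}(t-q^j) = t^{n-k}\prod_{j=0}^{k}(t-q^j) = \chi^q_{n+1,k+1}(t).
$$
Assuming inductively that $R^q(\chi^q_{m,k})=R^q_{m,k}$ holds for all $m\geq k$, applying the linear operator $R^q$ to the identity $\chi^q_{n+1,k+1}=\chi^q_{n+1,k}-q^k\chi^q_{n,k}$ and invoking \eqref{RS1} gives
$$
R^q(\chi^q_{n+1,k+1})=R^q(\chi^q_{n+1,k})-q^kR^q(\chi^q_{n,k})=R^q_{n+1,k}-q^kR^q_{n,k}=R^q_{n+1,k+1},
$$
which closes the induction on $k$.

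There is no real obstacle here; the whole argument hinges on the elementary observation that $\chi^q$ satisfies the same Pascal-type recursion as $R^q_{n,k}$, so the linearity of $R^q$ propagates the identity from the trivial base case. A more conceptual reformulation would be to note that $\{\chi^q_{n,k}\}$ and $\{R^q_{n,k}\}$ are the unique families of monic polynomials with $\chi^q_{n,0}=t^n$, $R^q_{n,0}=R^q_n(t)$ satisfying this recursion, and $R^q$ sends one seed to the other; but the direct induction is already the shortest route.
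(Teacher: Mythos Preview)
Your proof is correct and is essentially identical to the paper's: the paper also verifies the recursion $\chi^q_{n,k+1}=\chi^q_{n,k}-q^k\chi^q_{n-1,k}$ by direct computation and then inducts on $k$ using Proposition~\ref{SSRrec}. The only difference is cosmetic (you index the recursion at $n+1$ rather than $n$).
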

\begin{proof}
A direct computation proves the recursion 
$$
\chi^q_{n,k+1}(t)=\chi^q_{n,k}(t)-q^k\chi^q_{n-1,k}(t), 
$$
from which the lemma follows from Proposition~\ref{SSRrec} by induction on $k$.
\end{proof}
In light of Definition \ref{TN-poset} and Lemma~\ref{chitor} one might ask for which hyperplane arrangements $\HH$ in $\FFF_q^n$, the characteristic polynomial $\chi_\HH (t)$ has a nonnegative expansion in the polynomials $\{\chi^q_{n,k}(t)\}_{k=0}^n$. We will prove that all do.  In particular the following theorem holds for all chromatic polynomials of graphs. 
\begin{theorem}\label{posexpchi}Let $\HH$ be a hyperplane arrangement in $\FFF_q^n$. The characteristic polynomial of $\HH$ has a unique expansion 
$$
\chi_\HH(t)= \sum_{k=0}^{n} \theta_k(\HH) \cdot \chi^q_{n,k}(t), 
$$
where $\theta_k(\HH) \geq 0$ for all $k$, and $\theta_0(\HH)+\theta_1(\HH)+\cdots+\theta_{n}(\HH)=1$. 
\end{theorem}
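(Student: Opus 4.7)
The plan is to handle existence/uniqueness and the sum condition by straightforward linear algebra, and to reduce the positivity claim to a $\mathbb{B}(q)$-positivity statement for a poset constructed from $\HH$, via the operator $R^q$ and the shellable $q$-poset machinery of the paper.

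For existence and uniqueness, the $n+1$ polynomials $\{\chi^q_{n,k}(t)\}_{k=0}^n$ each have degree $n$ but are linearly independent in the $(n+1)$-dimensional space $\RR_n[t]$: evaluating at $t=q^j$ for $0\leq j\leq n$ gives $\chi^q_{n,k}(q^j)=q^{j(n-k)}\prod_{i=0}^{k-1}(q^j-q^i)$, which vanishes whenever $j<k$ (the factor $q^j-q^j$ appears) and is positive for $j\geq k$. Hence the matrix of evaluations is lower triangular with strictly positive diagonal, so $\{\chi^q_{n,k}\}_{k=0}^n$ is a basis of $\RR_n[t]$ and the expansion of $\chi_\HH(t)$ is unique. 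Comparing the monic leading coefficients of $t^n$ on both sides yields $\sum_k \theta_k(\HH)=1$.

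For positivity I would apply $R^q$ to both sides and use Lemma~\ref{chitor}, which reduces the claim to showing that $R^q(\chi_\HH)(t)$ has nonnegative expansion in $\{R^q_{n,k}(t)\}_{k=0}^n$. A direct computation using $R^q(t^d)=\sum_k\binom{d}{k}_q t^k$ and Möbius inversion on the Boolean lattice of subsets of $\HH$ gives
\[
R^q(\chi_\HH)(t)=\sum_{k=0}^n N_k(\HH)\,t^k,
\]
where $N_k(\HH)$ counts the $k$-subspaces $W\leq V=\FFF_q^n$ with $W\not\subseteq H$ for every $H\in\HH$. The complementary set $\bar S_\HH=\{W\leq V: W\subseteq H_i\text{ for some }i\}=\bigcup_i[\hat 0,H_i]$ is an order ideal in $\mathbb{B}_n(q)$, and is in fact a combinatorial $\mathbb{B}(q)$-poset of rank $n-1$: for any $W\in\bar S_\HH$ the principal order ideal $[\hat 0,W]$ is the full subspace lattice of $W$ (since $W'\leq W\subseteq H_i$ forces $W'\in\bar S_\HH$), with rank generating polynomial $R^q_{\dim W}(t)$. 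A shelling of $\bar S_\HH$ is obtained by ordering the facets $H_1,\dots,H_m$ and extending by standard shellings of each $[\hat 0,H_i]\cong\mathbb{B}_{n-1}(q)$, so by the paper's theorem on shellable $q$-posets, $f_{\bar S_\HH}(t)=\sum_l h_l R^q_{n-1,l}(t)$ with $h_l\geq 0$.

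Finally, since $S_\HH$ and $\bar S_\HH$ partition $\mathbb{B}_n(q)$, one has $R^q(\chi_\HH)(t)+f_{\bar S_\HH}(t)=R^q_n(t)=R^q_{n,0}(t)$. Applying Proposition~\ref{SSRrec} in the form $R^q_{n-1,k}(t)=q^{-k}(R^q_{n,k}(t)-R^q_{n,k+1}(t))$, one expands $f_{\bar S_\HH}$ in the basis $\{R^q_{n,k}\}$ and subtracts from $R^q_{n,0}(t)$, obtaining the explicit formulas $\theta_0=1-h_0$, $\theta_k=q^{-(k-1)}h_{k-1}-q^{-k}h_k$ for $1\leq k\leq n-1$, and $\theta_n=q^{-(n-1)}h_{n-1}$. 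The equality $h_0=f_{\bar S_\HH}(0)=\lvert\{\{0\}\}\rvert$ gives $\theta_0\geq 0$ immediately, and $\theta_n\geq 0$ is automatic.

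\paragraph{Main obstacle.} The hard part is the ratio bound $q h_{k-1}\geq h_k$ needed for $\theta_k\geq 0$ when $1\leq k\leq n-1$. Nonnegativity of the $h$-vector alone is insufficient; one must use the finer structure of the shelling of $\bar S_\HH$ as a union of codimension-one subspace lattices inside $\mathbb{B}_n(q)$, tracking the contribution of each newly attached facet $H_i$ to the $h$-vector. The $q$-analog of the simplicial ``restriction'' of a shelling step should produce exactly these controlled ratios between consecutive entries, completing the proof.
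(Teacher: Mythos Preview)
Your uniqueness and sum arguments are fine, but the positivity argument has a genuine gap and, as written, is circular.

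First, the circularity: you appeal to ``the paper's theorem on shellable $q$-posets'' to get $h_l\ge 0$ for $\bar S_\HH$. That result is Theorem~\ref{expU}, and its proof explicitly uses Theorem~\ref{posexpchi} (see the last sentence of the proof of Theorem~\ref{expU}). So you cannot invoke it here.

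Second, even granting nonnegativity of the $h$-vector of $\bar S_\HH$, your reduction does not actually make progress. From your own formulas $\theta_0=1-h_0$, $\theta_k=q^{-(k-1)}h_{k-1}-q^{-k}h_k$, $\theta_n=q^{-(n-1)}h_{n-1}$ one solves back to get $h_k=q^k\sum_{j>k}\theta_j(\HH)$. Hence the ``main obstacle'' inequality $qh_{k-1}\ge h_k$ is \emph{literally equivalent} to $\theta_k(\HH)\ge 0$; you have only restated the positivity claim in different coordinates. Nonnegativity of the $h_l$ alone is strictly weaker and cannot close this gap, and the vague appeal to ``the $q$-analog of the simplicial restriction of a shelling step'' does not supply the missing argument: tracking the shelling step by step produces, at each step, exactly an $R^q(\chi_{\mathcal{L}})$ for an arrangement $\mathcal{L}$ in $\FFF_q^{n-1}$, which brings you right back to the same problem one dimension down, together with an additional ratio condition that is not part of any inductive hypothesis you have set up.

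The paper's proof takes a different route that avoids this trap. It introduces the operator $\mathsf{D}_n f(t)=f(qt)-q^n f(t)$, which annihilates $\chi^q_{n,0}$ and sends $\chi^q_{n,k}$ to a positive multiple of $\chi^q_{n-1,k-1}$. Using the critical problem (Theorem~\ref{crita}) to count linear maps with prescribed kernel intersection with $\MM$, Lemma~\ref{Dn-form} expresses $\mathsf{D}_n\chi_\MM$ as a nonnegative combination of differences $\chi_F-\chi_\MM$ over flats $F$, and Lemma~\ref{dccon} (deletion/contraction) writes each such difference as a sum of characteristic polynomials of arrangements in $\FFF_q^{n-1}$. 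This gives an honest induction on $n$ (Theorem~\ref{fundn}) without any $h$-vector ratio inequality.
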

Theorem~\ref{posexpchi} has the following consequence.
\begin{corollary}\label{ch-cor}
The convex hull of the set of all characteristic polynomials of hyperplane arrangements in $\FFF_q^n$ is equal to the simplex 
$$
\left\{ \sum_{k=0}^{n} \theta_k \cdot \chi^q_{n,k}(t) : \theta_k \geq 0 \mbox{ for all } k, \mbox{ and } \theta_0 + \theta_1+ \cdots +\theta_{n}=1\right\}. 
$$
\end{corollary}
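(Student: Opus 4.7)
\begin{proof}[Proof proposal for Corollary \ref{ch-cor}]
The plan is to establish two inclusions between the convex hull $\mathcal{C}$ of the set of all characteristic polynomials of hyperplane arrangements in $\FFF_q^n$ and the simplex
$$
\Sigma = \left\{ \sum_{k=0}^{n} \theta_k \cdot \chi^q_{n,k}(t) : \theta_k \geq 0, \ \sum_{k=0}^n\theta_k=1\right\}.
$$
The inclusion $\mathcal{C} \subseteq \Sigma$ is immediate from Theorem \ref{posexpchi}: every characteristic polynomial $\chi_\HH(t)$ is already a convex combination of the $\chi^q_{n,k}(t)$, so the same is true for any convex combination of characteristic polynomials.

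For the reverse inclusion $\Sigma \subseteq \mathcal{C}$, it suffices to exhibit, for each $0 \leq k \leq n$, a hyperplane arrangement $\HH_k$ in $\FFF_q^n$ whose characteristic polynomial equals the vertex $\chi^q_{n,k}(t)$. Fix an $(n-k)$-dimensional subspace $V \subseteq \FFF_q^n$, and let $\HH_k$ be the collection of all hyperplanes of $\FFF_q^n$ that contain $V$ (with $\HH_0 = \emptyset$). Under the quotient map $\FFF_q^n \to \FFF_q^n/V \cong \FFF_q^k$, these hyperplanes correspond bijectively with hyperplanes through the origin in $\FFF_q^k$, and the intersection of any sub-collection of $\HH_k$ is again a subspace containing $V$. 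Hence the intersection lattice of $\HH_k$ is order-isomorphic to $\mathbb{B}_k(q)$, with each element $x$ having dimension $\dim(V)+\dim(x/V) = (n-k) + \dim_{\FFF_q^k}(x/V)$.

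By M\"obius inversion, the characteristic polynomial of $\HH_k$ satisfies
$$
\chi_{\HH_k}(t) = \sum_{x \in \mathbb{B}_k(q)} \mu_{\mathbb{B}_k(q)}(\hat{0},x)\, t^{(n-k)+\dim(x)} = t^{n-k} \prod_{i=0}^{k-1}(t-q^i) = \chi^q_{n,k}(t),
$$
using the well-known formula $\prod_{i=0}^{k-1}(t-q^i)$ for the characteristic polynomial of the subspace lattice $\mathbb{B}_k(q)$. Therefore every vertex of $\Sigma$ lies in $\mathcal{C}$, which gives $\Sigma \subseteq \mathcal{C}$ and completes the proof.
\end{proof}

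The main conceptual content is already packed into Theorem \ref{posexpchi}, so the only real task here is a realization argument: identifying each extreme point $\chi^q_{n,k}(t)$ as an honest characteristic polynomial. I do not expect any serious obstacle, but one should double-check that the paper's convention $\chi_\HH(t) = \sum_A (-1)^{|A|} t^{\dim \cap H_i}$ agrees with the M\"obius-function formula on the intersection lattice (this is standard inclusion-exclusion), and that the characteristic polynomial of $\mathbb{B}_k(q)$ is indeed $\prod_{i=0}^{k-1}(t-q^i)$, which is classical.
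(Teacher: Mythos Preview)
Your overall strategy is exactly the paper's: one inclusion from Theorem~\ref{posexpchi}, the other by realizing each vertex $\chi^q_{n,k}(t)$ as an honest characteristic polynomial. Your arrangement $\HH_k$ (all hyperplanes containing a fixed $(n-k)$-dimensional $V$) is the same as the paper's (all hyperplanes with normal in a fixed $k$-dimensional $U$), under $U=V^\perp$. The only difference is in verifying $\chi_{\HH_k}(t)=\chi^q_{n,k}(t)$: the paper appeals to the critical problem (Theorem~\ref{crita}), while you use M\"obius inversion on the intersection lattice.

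There is, however, a slip in your M\"obius computation. The intersection lattice $L(\HH_k)$ is ordered by \emph{reverse} inclusion, so its minimum $\hat 0$ is $\FFF_q^n$. Under your identification $W\mapsto W/V$ with $\mathbb{B}_k(q)$, this corresponds to $\hat 1=\FFF_q^k$, not $\hat 0=\{0\}$. Thus the displayed sum should read
\[
\chi_{\HH_k}(t)=\sum_{x\in\mathbb{B}_k(q)}\mu_{\mathbb{B}_k(q)}(x,\hat 1)\,t^{(n-k)+\dim(x)},
\]
not $\mu_{\mathbb{B}_k(q)}(\hat 0,x)$. (Check $k=1$: your formula gives $t^{n-1}-t^n$, the wrong sign.) The fix is routine --- invoke the self-duality of $\mathbb{B}_k(q)$, or equivalently rewrite the sum as $\sum_x\mu(\hat 0,x)\,t^{\,n-\dim(x)}=t^{n-k}\prod_{i=0}^{k-1}(t-q^i)$ --- and then your conclusion stands.
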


\begin{proof}
By Theorem~\ref{posexpchi} the convex hull of the characteristic polynomials is a subset of the simplex. The converse follows by the fact that if $U$ is a subspace of $\FFF_q^n$ of dimension $k$, then the characteristic polynomial of the arrangement 
$$
\HH= \{H : \mbox{ the normal of $H$ is in $U$}\}
$$ 
is equal to $\chi^q_{n,k}(t)$. This may be proved using  Theorem~\ref{crita}. 
\end{proof}

To prove Theorem~\ref{posexpchi}  we recall the Critical Problem of Crapo and Rota. Recall that the \emph{matroid} associated to the hyperplane arrangement $\HH=\{H_1,\ldots, H_m\}$ is the (ordered) set $\MM= \{ n_1,\ldots, n_m\} \subseteq \FFF_q^n \setminus \{(0)\}$, where $n_i$ is a normal vector of $H_i$ for each $i$. We write $\chi_\MM(t)$ for $\chi_\HH(t)$. 

\begin{theorem}\cite[Thm 1 in Ch. 16]{crapo1970foundations}
\label{crita}
    Let $\HH$ be a hyperplane arrangement in $\FFF_q^n$, and let $m \in \NN$. Then 
    $$
        \chi_{\MM}(q^m) = |\{ \varphi \in \Hom(\FFF_q^n, \FFF_q^m) : \ker(\varphi)\cap \MM= \varnothing\}|, 
    $$
    Moreover if $F$ is a flat of $\MM$, then 
    \begin{equation}\label{q-flats}
        \chi_{\MM/F}(q^m) = |\{ \varphi \in \Hom(\FFF_q^n, \FFF_q^m) : \ker(\varphi)\cap \MM= F\}|, 
    \end{equation}
    where $\MM/F$ is considered a subset of $\FFF_q^n / \langle F\rangle \cong \FFF_q^{n-r(F)}$. 
\end{theorem}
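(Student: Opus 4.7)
The plan is to prove both parts by a combination of inclusion–exclusion on the hyperplanes and Möbius inversion on the lattice of flats of $\MM$. The first identity is an unconditional count, and the second follows by disentangling these counts across the flats above $F$.

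For the first identity, I would start from the defining expansion
$$
\chi_{\MM}(t) = \sum_{A \subseteq \{1,\dots,m\}} (-1)^{|A|} t^{\dim(\cap_{i\in A} H_i)}.
$$
The key linear-algebraic observation is that $\dim(\cap_{i\in A} H_i) = n - \dim\langle n_i : i\in A\rangle$, and that a homomorphism $\varphi \in \Hom(\FFF_q^n, \FFF_q^m)$ satisfies $\{n_i : i\in A\} \subseteq \ker(\varphi)$ exactly when $\varphi$ factors through $\FFF_q^n / \langle n_i : i \in A \rangle$. The number of such $\varphi$ is therefore $q^{m(n-\dim\langle n_i : i \in A\rangle)} = q^{m \dim(\cap_{i\in A} H_i)}$. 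Inclusion–exclusion on the events $\{n_i \in \ker(\varphi)\}$, $i=1,\dots,m$, then yields
$$
|\{\varphi : \ker(\varphi)\cap \MM = \varnothing\}| = \sum_A (-1)^{|A|} q^{m \dim(\cap_{i\in A} H_i)} = \chi_{\MM}(q^m).
$$

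For the second identity, the crucial structural remark is that $\ker(\varphi)\cap \MM$ is always a flat of $\MM$, since it is the intersection of $\MM$ with a subspace of $\FFF_q^n$. Writing $N_=(F) = |\{\varphi : \ker(\varphi)\cap \MM = F\}|$ and $N_\supseteq(F) = |\{\varphi : \ker(\varphi)\cap \MM \supseteq F\}|$, the decomposition
$$
N_\supseteq(F) = \sum_{F' \geq F} N_=(F')
$$
ranges over the lattice of flats $\mathcal{L}(\MM)$, and Möbius inversion gives $N_=(F) = \sum_{F' \geq F} \mu_{\mathcal{L}(\MM)}(F, F')\, N_\supseteq(F')$. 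The easy direct computation (as in the first part) shows $N_\supseteq(F') = q^{m(n-r(F'))}$. On the other hand, using the standard expansion of the characteristic polynomial of the contraction,
$$
\chi_{\MM/F}(t) = \sum_{F' \geq F} \mu_{\mathcal{L}(\MM)}(F,F')\, t^{(n-r(F)) - (r(F')-r(F))} = \sum_{F' \geq F} \mu_{\mathcal{L}(\MM)}(F,F')\, t^{n-r(F')},
$$
evaluating at $t=q^m$ gives exactly $N_=(F)$.

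The main obstacle, and the only nontrivial conceptual step, is the verification that $\ker(\varphi)\cap \MM$ is a flat and the correct identification of $\chi_{\MM/F}$ as a sum over the interval $[F,\hat 1]$ in $\mathcal{L}(\MM)$ with the appropriate shift in rank so that $r(\MM/F) = r(\MM) - r(F)$ matches the dimension $n - r(F)$ of the ambient quotient space $\FFF_q^n/\langle F\rangle$. Once this matroid-theoretic bookkeeping is in place, both identities follow from the same inclusion–exclusion coupled with Möbius inversion on $\mathcal{L}(\MM)$.
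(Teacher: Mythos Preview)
The paper does not supply its own proof of this theorem; it is stated with a citation to Crapo and Rota \cite{crapo1970foundations} and then used as a black box in the proof of Lemma~\ref{Dn-form}. There is therefore nothing in the paper to compare your argument against.

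That said, your proof is correct and is essentially the classical argument. The first identity is exactly inclusion--exclusion on the events $\{n_i \in \ker(\varphi)\}$ combined with the count $|\{\varphi : \langle n_i : i\in A\rangle \subseteq \ker\varphi\}| = (q^m)^{n-\dim\langle n_i:i\in A\rangle}$. For the second identity, the observation that $\ker(\varphi)\cap \MM$ is always a flat (being the intersection of $\MM$ with a subspace) is the right structural point, and your M\"obius inversion over $[F,\hat 1]$ together with the identification of $\chi_{\MM/F}(t)$ with $\sum_{F'\geq F}\mu(F,F')\,t^{n-r(F')}$ is the standard way to finish.

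One small presentational remark: the paper overloads the letter $m$, using it both for the number of hyperplanes in $\HH=\{H_1,\dots,H_m\}$ and for the exponent in $q^m$. Your write-up inherits this collision when you write $A\subseteq\{1,\dots,m\}$ and $q^m$ in the same displayed line; it would be cleaner to rename one of them.
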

For $n \in \NN$, let $\mathsf{D}_n \colon \RR[t] \to \RR[t]$ be the linear operator defined by 
$$
(\mathsf{D}_n f)(t) = f(qt)-q^nf(t),  
$$
and notice that  $\mathsf{D}_n \chi^q_{n,0}(t) \equiv 0$, and 
$$
    \mathsf{D}_n \chi^q_{n,k}(t)= q^{n-1}(q^k-1)\chi^q_{n-1,k-1}(t), \ \ \ \mbox{ for } 1\leq k \leq n. 
$$
Also, 
$$
\chi_\HH(1)= \theta_0(\HH)= \begin{cases}
1 &\mbox{ if } \HH = \varnothing, \mbox{ and } \\
0 &\mbox{ otherwise.}
\end{cases}
$$
Hence to prove that $\chi_\HH(t)$ has a nonnegative expansion in the polynomials  $\chi^q_{n,k}(t)$ it suffices to prove that $\mathsf{D}_n \chi_\HH(t)$ may be expressed as a nonnegative sum of characteristic polynomials $\chi_{\HH'}(t)$ of hyperplane arrangements $\HH'$ in $\FFF_q^{n-1}$. Hence we want to prove: 
\begin{theorem}\label{fundn}Let $\MM \subseteq \FFF_q^n \setminus \{(0)\}$ be nonempty. Then 
$$
\mathsf{D}_n\chi_\MM(t) = \sum_{\MM'} \lambda(\MM') \cdot \chi_{\MM'}(t),
$$
where $\MM' \subseteq \FFF_q^{n-1} \setminus \{(0)\}$ and $\lambda(\MM') \geq 0$. 
\end{theorem}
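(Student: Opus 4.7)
The plan is to prove Theorem \ref{fundn} by combinatorially interpreting $\mathsf{D}_n\chi_\MM(t)$ via the critical problem (Theorem \ref{crita}) evaluated at $t = q^m$, and then linearizing an image-cardinality count to recover characteristic polynomials of arrangements in $\FFF_q^{n-1}$.

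First I would rewrite $\mathsf{D}_n\chi_\MM(q^m) = \chi_\MM(q^{m+1}) - q^n\chi_\MM(q^m)$. Splitting $\FFF_q^{m+1} = \FFF_q \oplus \FFF_q^m$ and writing $\varphi = (\ell,\psi)$ with $\ell \in \Hom(\FFF_q^n,\FFF_q)$ and $\psi \in \Hom(\FFF_q^n,\FFF_q^m)$, Theorem~\ref{crita} identifies
$$\chi_\MM(q^{m+1}) = \#\{(\ell,\psi) : \ker\varphi \cap \MM = \varnothing\}, \qquad q^n \chi_\MM(q^m) = \#\{(\ell,\psi) : \ker\psi \cap \MM = \varnothing\}.$$
Since $\ker\varphi \subseteq \ker\psi$, the second set is contained in the first, so
$$\mathsf{D}_n\chi_\MM(q^m) = \#\bigl\{(\ell,\psi) : \ker\varphi \cap \MM = \varnothing \text{ and } \ker\psi \cap \MM \neq \varnothing\bigr\},$$
and the second condition forces $\ell \neq 0$, so $H := \ker\ell$ is a hyperplane of $\FFF_q^n$.

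For each hyperplane $H$ I would fix a complement $\FFF_q^n = H \oplus \langle v_0\rangle$ and decompose $\psi = (\psi',y)$ with $\psi' = \psi|_H$ and $y = \psi(v_0)$. For $v \in \MM \setminus H$ write $v = h_v + \alpha_v v_0$ and set $\tilde v := \alpha_v^{-1} h_v \in H$; parallel elements give equal $\tilde v$, and let $\tilde\MM^c_H \subseteq H$ be the set of distinct $\tilde v$. The conditions on $(\psi',y)$ become (a) $\ker\psi' \cap \MM_H = \varnothing$ with $\MM_H := \MM \cap H$, and (b) $y \in -\psi'(\tilde\MM^c_H)$. For each $(H,\psi')$ satisfying (a), the number of valid $y$ is $|\psi'(\tilde\MM^c_H)|$, and each $H$ contributes $q-1$ nonzero $\ell$, so
$$\mathsf{D}_n\chi_\MM(q^m) = (q-1) \sum_{H} \sum_{\psi' \,:\, (a)} |\psi'(\tilde\MM^c_H)|.$$
The crucial step is to linearize this cardinality: fix an arbitrary linear order $u_1 < \cdots < u_k$ on $\tilde\MM^c_H$ and use
$$|\psi'(\tilde\MM^c_H)| = \sum_{i=1}^{k} \mathbf{1}\bigl[\psi'(u_i - u_j) \neq 0 \text{ for all } j < i\bigr].$$
Combined with (a), the $i$-th indicator becomes $\mathbf{1}[\ker\psi' \cap S_i(H) = \varnothing]$ for the arrangement
$$S_i(H) := \MM_H \cup \{u_i - u_j : 1 \leq j < i\} \subseteq H \setminus \{0\}.$$
Applying Theorem \ref{crita} inside $H \cong \FFF_q^{n-1}$ to count such $\psi'$, summing, and passing from the values at $t = q^m$ to a polynomial identity (valid since $q > 1$ gives infinitely many distinct evaluation points), I obtain
$$\mathsf{D}_n\chi_\MM(t) = (q-1) \sum_{H} \sum_{i=1}^{|\tilde\MM^c_H|} \chi_{S_i(H)}(t),$$
a nonnegative combination of characteristic polynomials of arrangements in $\FFF_q^{n-1}$, as required.

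The main obstacle I expect is that the natural decomposition by flats, $\mathsf{D}_n\chi_\MM(t) = \sum_{F \neq \varnothing}\chi_F(q)\chi_{\MM/F}(t)$, fails to stay inside $\FFF_q^{n-1}$ because $\chi_{\MM/F}(t)$ has degree $n - r(F) < n - 1$ as soon as $r(F) \geq 2$, and no nonnegative combination of degree-$(n-1)$ characteristic polynomials can absorb such lower-degree summands individually. Replacing the flat-by-flat decomposition with the ordering trick above is what folds the higher-rank contributions into genuine characteristic polynomials in $\FFF_q^{n-1}$, by turning the non-linear quantity $|\psi'(\tilde\MM^c_H)|$ into a sum of linear conditions via the identity $\psi'(u_i) \neq \psi'(u_j) \iff \psi'(u_i - u_j) \neq 0$. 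The only remaining check is that each $S_i(H)$ lies in $H \setminus \{0\}$, which follows from $\MM$ being loopless and the $u_i$ being pairwise distinct in $H$.
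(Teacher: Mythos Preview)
Your proof is correct and takes a genuinely different route from the paper's. Both arguments begin identically, interpreting $\mathsf{D}_n\chi_\MM(q^m)$ via the critical problem as a count of pairs $(\ell,\psi)$ with $\ker(\ell,\psi)\cap\MM=\varnothing$ and $\ker\psi\cap\MM\neq\varnothing$. From there the paper groups by the flat $F=\ker\ell\cap\MM$ to obtain $\mathsf{D}_n\chi_\MM(t)=\sum_F \chi_{\MM/F}(q)\bigl(\chi_F(t)-\chi_\MM(t)\bigr)$ (Lemma~\ref{Dn-form}), and then invokes a separate deletion/contraction lemma (Lemma~\ref{dccon}) to rewrite each difference $\chi_F(t)-\chi_\MM(t)$ as a nonnegative sum of characteristic polynomials in $\FFF_q^{n-1}$. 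You instead group by the hyperplane $H=\ker\ell$ itself, reduce to counting $|\psi'(\tilde\MM^c_H)|$, and linearize that cardinality via the ordering identity $|\psi'(\{u_1,\dots,u_k\})|=\sum_i\mathbf{1}[\psi'(u_i-u_j)\neq 0\ \forall j<i]$. This yields an explicit closed formula $\mathsf{D}_n\chi_\MM(t)=(q-1)\sum_H\sum_i\chi_{S_i(H)}(t)$ in a single stroke, bypassing deletion/contraction entirely. The trade-off is that your arrangements $S_i(H)$ depend on the auxiliary choices of $v_0$ and the ordering on $\tilde\MM^c_H$, whereas the paper's two lemmas are each canonical and may be independently useful; on the other hand your argument is more self-contained and produces uniform coefficients $q-1$. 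Your closing remark about the decomposition $\sum_{F\neq\varnothing}\chi_F(q)\chi_{\MM/F}(t)$ is also apt: this is the \emph{other} natural grouping (by $\ker\psi\cap\MM$), and the paper's choice to group by $\ker\ell\cap\MM$ instead is precisely what keeps all the $t$-dependent factors at degree $n$ so that deletion/contraction can finish.
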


\begin{lemma}\label{Dn-form}Let $\MM \subseteq \FFF_q^n \setminus \{(0)\}$ be nonempty. 
Then 
$$
\mathsf{D}_n\chi_\MM(t)= \sum_{F} \chi_{\MM/F}(q)\cdot \left( \chi_F(t) -\chi_\MM(t)\right),
$$
where the sum is over all flats $F$ of $\MM$. 
\end{lemma}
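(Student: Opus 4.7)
The plan is to verify the identity polynomially by evaluating both sides at $t = q^m$ for each $m \in \NN$, and using the critical problem (Theorem~\ref{crita}) to translate everything into counts of linear maps.

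At $t = q^m$, the left-hand side equals $\chi_\MM(q^{m+1}) - q^n \chi_\MM(q^m)$. The first key step is to express $\chi_\MM(q^{m+1})$ as a sum over flats. Any $\varphi \in \Hom(\FFF_q^n, \FFF_q^{m+1})$ decomposes as $\varphi = (\psi, \ell)$ with $\psi \in \Hom(\FFF_q^n, \FFF_q^m)$ and $\ell \in \Hom(\FFF_q^n, \FFF_q)$; then $\ker \varphi = \ker\psi \cap \ker\ell$, so $\ker\varphi \cap \MM = \varnothing$ iff $\ker\psi \cap (\ker\ell \cap \MM) = \varnothing$. Since $F := \ker\ell \cap \MM$ is a flat of $\MM$ (it equals $\MM \cap \langle F\rangle$), sorting by this $F$ and applying Theorem~\ref{crita} once for $\ell$ and once for $\psi$ gives
\[
\chi_\MM(q^{m+1}) = \sum_F \chi_{\MM/F}(q)\,\chi_F(q^m),
\]
where the factor $\chi_{\MM/F}(q)$ counts $\ell$ with $\ker\ell \cap \MM = F$ by \eqref{q-flats}, and $\chi_F(q^m)$ counts $\psi$ with $\ker\psi \cap F = \varnothing$ by applying Theorem~\ref{crita} to the sub-matroid $F \subseteq \FFF_q^n$.

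The second key step is the simple identity $\sum_F \chi_{\MM/F}(q) = q^n$, obtained by partitioning all $q^n$ linear functionals $\ell: \FFF_q^n \to \FFF_q$ according to the flat $\ker\ell \cap \MM$ and again applying \eqref{q-flats}. Combining these two identities, the right-hand side at $t = q^m$ becomes
\[
\sum_F \chi_{\MM/F}(q)\chi_F(q^m) \;-\; \chi_\MM(q^m)\sum_F \chi_{\MM/F}(q) \;=\; \chi_\MM(q^{m+1}) - q^n\chi_\MM(q^m),
\]
which matches the left-hand side. Since both sides of the claimed identity are polynomials in $t$ and agree at the infinitely many distinct values $\{q^m : m \geq 0\}$ (recall $q \geq 2$), they agree as polynomials.

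The main obstacle is recognizing the correct bookkeeping: the term $q^n\chi_\MM(q^m)$ appearing on the LHS must be viewed as $\chi_\MM(q^m)\sum_F \chi_{\MM/F}(q)$, so that the two flat-sums on the RHS collapse into the single expression $\chi_\MM(q^{m+1}) - q^n\chi_\MM(q^m)$ via the flat-decomposition of $\chi_\MM(q^{m+1})$. The auxiliary observations that $\ker\ell\cap\MM$ is always a flat and that $\chi_F(q^m)$ admits the hom-count interpretation (the critical problem applied to $F$ as a matroid embedded in $\FFF_q^n$) are routine but essential.
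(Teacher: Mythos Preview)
Your proof is correct and follows essentially the same approach as the paper: both decompose $\Hom(\FFF_q^n,\FFF_q^{m+1}) \cong \Hom(\FFF_q^n,\FFF_q^m)\times\Hom(\FFF_q^n,\FFF_q)$ and sort by the flat $F=\ker\ell\cap\MM$, then invoke Theorem~\ref{crita} on each factor. The only cosmetic difference is that the paper packages the two terms into a single count (pairs $(\varphi,\lambda)$ with $\ker\varphi\cap\MM\neq\varnothing$ but $\ker\lambda\cap\ker\varphi\cap\MM=\varnothing$), whereas you prove the two identities $\chi_\MM(q^{m+1})=\sum_F\chi_{\MM/F}(q)\chi_F(q^m)$ and $q^n=\sum_F\chi_{\MM/F}(q)$ separately and subtract.
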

\begin{proof}
Let $C_\MM(n,m)= \{ \varphi \in \Hom(\FFF_q^n, \FFF_q^m) : \ker(\varphi)\cap \MM= \varnothing\}$ and consider the injective map 
$$
\Phi : C_\MM(n,m)\times \Hom(\FFF_q^n , \FFF_q) \to C_\MM(n,m+1)
$$
given by 
$$
\Phi(\varphi, \lambda)(x)= \varphi(x)+ \lambda(x)e_{m+1},
$$
where $e_{m+1}$ is the $(m+1)$st standard basis vector of $\FFF_q^{m+1}$. Then the number of elements in $C_\MM(n,m+1)$ which are not in the image of $\Phi$ is 
$$
  \chi_\MM(q^{m+1})- q^n \chi_\MM(q^{m}) = (D_n\chi_\MM)(q^m),
$$
by Theorem~\ref{crita}. 
Hence $D_n\chi_\MM(q^m)$ is equal to the number of pairs $(\varphi,\lambda)$, where $\varphi \in \Hom(\FFF_q^n, \FFF_q^m)$ and $\lambda \in \Hom(\FFF_q^n, \FFF_q)$ are such that 
\begin{itemize}
\item $\ker(\varphi) \cap \MM \neq \varnothing$, and 
%\item $\ker(\lambda) \cap \MM \neq  \MM$, and 
\item $\ker(\lambda) \cap \ker(\varphi) \cap \MM = \varnothing$.  
\end{itemize} 
Let us count the number of such $(\varphi,\lambda)$. 
Suppose $\ker(\lambda) \cap \MM= F \neq  \MM$. The number of such $\lambda$ is $\chi_{\MM/F}(q)$ by Theorem~\ref{crita}. For $\varphi$ we have  $\varnothing \neq \ker(\varphi)\cap \MM \subseteq \MM \setminus F$, {since $\ker(\varphi) \cap F = \varnothing$}. Thus the number of such $\varphi$ is $\chi_{F}(q^m)- \chi_\MM(q^m)$, and hence 
$$
\mathsf{D}_n\chi_\MM(q^m)= \sum_{F < \MM} \chi_{\MM/F}(q)\cdot \left( \chi_F(q^m) -\chi_\MM(q^m)\right),
$$
from which the lemma follows since the identity holds for all positive $m$. 
\end{proof}

\begin{lemma}\label{dccon}
If $A \subseteq B \subseteq \FFF_q^n \setminus \{(0)\}$, then 
$$
\chi_A(t)-\chi_B(t) = \sum_{C} \chi_C(t),
$$
where the sum is over a collection of $C \subseteq \FFF_q^{n-1} \setminus \{(0)\}$. 
\end{lemma}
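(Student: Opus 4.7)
The plan is to reduce to the one-element case by induction on $|B \setminus A|$. Choose any flag $A = A_0 \subsetneq A_1 \subsetneq \cdots \subsetneq A_k = B$ with $|A_{i+1} \setminus A_i| = 1$; then telescoping
$$
\chi_A(t) - \chi_B(t) = \sum_{i=0}^{k-1} \bigl( \chi_{A_i}(t) - \chi_{A_{i+1}}(t) \bigr)
$$
reduces matters to showing that $\chi_A(t) - \chi_{A \sqcup \{v\}}(t)$ is either $0$ or a single $\chi_C(t)$ with $C \subseteq \FFF_q^{n-1} \setminus \{(0)\}$.

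For the one-element step, I would split the defining sum of $\chi_{A \sqcup \{v\}}(t)$ according to whether $v \in S$, obtaining
$$
\chi_A(t) - \chi_{A \sqcup \{v\}}(t) = \sum_{S \subseteq A} (-1)^{|S|} t^{\dim(H_v \cap \bigcap_{s \in S} H_s)}.
$$
Now partition $A = A' \sqcup A''$ where $A'' := \{a \in A : H_a = H_v\}$. For $a \in A'$, $L_a := H_a \cap H_v$ is a genuine hyperplane of $H_v \cong \FFF_q^{n-1}$; for $a \in A''$, intersecting with $H_a$ has no effect once one has already intersected with $H_v$. Writing $S = S' \sqcup T$ with $S' \subseteq A'$ and $T \subseteq A''$ factorizes the sum as
$$
\left( \sum_{S' \subseteq A'} (-1)^{|S'|} t^{\dim \bigcap_{a \in S'} L_a} \right) \left( \sum_{T \subseteq A''} (-1)^{|T|} \right),
$$
whose first factor equals $\chi_C(t)$ for $C \subseteq \FFF_q^{n-1} \setminus \{(0)\}$ the list of normal vectors of the $L_a$ inside $H_v$, and whose second factor is $1$ if $A'' = \varnothing$ and $0$ otherwise.

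Thus each one-element step contributes either a single $\chi_C(t)$ with $C \subseteq \FFF_q^{n-1} \setminus \{(0)\}$ (when $v$ defines a genuinely new hyperplane) or $0$ (when $v$ duplicates an existing hyperplane of $A$), and telescoping then yields the claimed decomposition of $\chi_A(t) - \chi_B(t)$. The mild subtlety is the duplicate-hyperplane case, but the cancellation $\sum_{T \subseteq A''} (-1)^{|T|} = 0$ for nonempty $A''$ handles it cleanly by killing precisely the contributions that one would otherwise have no natural way to realize as an arrangement in $\FFF_q^{n-1}$.
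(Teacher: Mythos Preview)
Your proof is correct and follows essentially the same strategy as the paper's: induct on $|B\setminus A|$ and peel off one element at a time. The paper's version is a three-line argument that simply invokes the deletion--contraction identity $\chi_B(t)=\chi_{B\setminus e}(t)-\chi_{B/e}(t)$, with $\chi_{B/e}$ the characteristic polynomial of an arrangement in $\FFF_q^{n-1}$, and then iterates. Your computation of $\chi_A(t)-\chi_{A\sqcup\{v\}}(t)$ is precisely this deletion--contraction step derived from scratch via the inclusion--exclusion definition; your factor $\sum_{S'\subseteq A'}(-1)^{|S'|}t^{\dim\bigcap L_a}$ \emph{is} $\chi_{(A\sqcup\{v\})/v}(t)$, and your explicit treatment of $A''\neq\varnothing$ recovers the fact that a contraction producing a loop has vanishing characteristic polynomial. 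So the content is the same; your write-up is more self-contained, while the paper's is terser by citing a standard identity.
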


\begin{proof}
Let $e \in B \setminus A$. By deletion/contraction, 
$$
\chi_B(t)= \chi_{B \setminus e}(t)-\chi_{B/e}(t),
$$
where $\chi_{B/e}(t)$ is the characteristic polynomial of an arrangement in $\FFF_q^{n-1} \setminus \{(0)\}$. Hence 
$$
\chi_A(t)-\chi_B(t) = (\chi_A(t)-\chi_{B \setminus e}(t))+\chi_{B/e}(t),
$$
from which the lemma follows by iterating. 
\end{proof}

Theorem~\ref{fundn} now follows from Lemma~\ref{Dn-form} and Lemma~\ref{dccon}. 

\subsection{Shellable  \texorpdfstring{$q$}--posets} 
\newcommand{\UU}{\mathcal{U}}
 Alder \cite{alder2010q} considered shellings of $q$-posets. His notion of shellings is orderings of the facets which satisfy (S1). Since $\mathbb{B}(q)$ is modular Alder's notion is equivalent to C-shellings by Remark \ref{C-shell-shell}. In particular, by Remark \ref{Cshell-Dshell}, if a $q$-poset $P$ is shellable according to Alder, then $\Delta(P)$ is also shellable. This was conjectured in \cite[Section 5]{Ghorpade2}. 
In particular  if $H_1, \ldots, H_m$ are distinct hyperplanes of $\mathbb{B}_{n+1}(q)$, then 
$P= \langle H_1\rangle \cup \cdots \cup \langle H_{\ell} \rangle$ is shellable, and any ordering of the hyperplanes is a shelling.

\begin{theorem}\label{expU}
If $P$ is a C-shellable $q$-poset, then $P$ is $\mathbb{B}(q)$-positive. 
In particular the chain polynomials of $P$ and its rank selected {subposet}s are real-rooted. 
\end{theorem}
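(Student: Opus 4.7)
The plan is to induct on the number $m$ of facets in the shelling $F_1,\ldots,F_m$. For each $k \leq m$ set $P_k := \langle F_1\rangle \cup \cdots \cup \langle F_k\rangle$; I will show that each $P_k$ is $\mathbb{B}(q)$-positive, and the statement for $P$ is the case $k=m$. The base case $k=1$ is immediate since $P_1=\langle F_1\rangle \cong \mathbb{B}_n(q)$ has rank generating polynomial $R^q_n(t) = R^q_{n,0}(t)$, giving $h_0(P_1)=1$ and $h_j(P_1)=0$ otherwise.

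For the inductive step, the shelling hypothesis forces $I_k:=P_{k-1}\cap\langle F_k\rangle$ to be pure of rank $n-1$. Being a downward-closed subposet of $\langle F_k\rangle \cong \mathbb{B}_n(q)$, its maximal elements form a collection $\HH_k$ of hyperplanes of $F_k$ with $I_k = \bigcup_{H\in\HH_k}\langle H\rangle$. I would then apply inclusion-exclusion, using that $\sum_{x\leq W}t^{\rho(x)} = R^q_{\dim W}(t)$ for each subspace $W\leq F_k$, to obtain
$$
f_{I_k}(t) = \sum_{\varnothing\neq S\subseteq \HH_k}(-1)^{|S|+1} R^q_{\dim \cap_{H\in S}H}(t).
$$
Comparing this with $\chi_{\HH_k}(t) = \sum_S (-1)^{|S|}t^{\dim \cap_{H\in S}H}$ (the $S=\varnothing$ term contributing $t^n$) and applying the linear map $R^q:t^j\mapsto R^q_j(t)$ yields the key identity
$$
R^q\bigl(\chi_{\HH_k}(t)\bigr) = R^q_n(t) - f_{I_k}(t).
$$

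Now, since $P_k = P_{k-1}\cup\langle F_k\rangle$ with intersection $I_k$, inclusion-exclusion for rank generating polynomials gives $f_{P_k}(t) = f_{P_{k-1}}(t) + R^q(\chi_{\HH_k}(t))$. By Theorem~\ref{posexpchi}, $\chi_{\HH_k}(t)$ has a nonnegative expansion in $\{\chi^q_{n,j}(t)\}_{j=0}^n$, and Lemma~\ref{chitor} transports this to a nonnegative expansion of $R^q(\chi_{\HH_k}(t))$ in $\{R^q_{n,j}(t)\}_{j=0}^n$. Combined with the inductive hypothesis on $f_{P_{k-1}}(t)$, this yields the desired nonnegative expansion of $f_{P_k}(t)$, so $P_k$ is $\mathbb{B}(q)$-positive. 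The real-rootedness of chain polynomials of $P$ and its rank selected sub-posets then follows from Theorem~\ref{rposp} and Theorem~\ref{rpossel}, using that $\mathbb{B}(q)$ is a $\mathrm{TN}$-poset (with Proposition~\ref{propranksel} ensuring the rank-selected versions are also $\mathrm{TN}$).

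I expect the main obstacle to be the identity $R^q(\chi_{\HH_k}) = R^q_n - f_{I_k}$, or more precisely, recognizing $R^q$ as the natural bridge between characteristic polynomials of hyperplane arrangements in $\FFF_q^n$ and rank generating polynomials of downward-closed subsets of $\mathbb{B}_n(q)$. Once that bridge is in place, the previously established Theorem~\ref{posexpchi} delivers the required positivity for free.
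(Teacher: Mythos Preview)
Your proof is correct and follows essentially the same route as the paper: induct along the shelling, use inclusion--exclusion to identify the increment $f_{P_k}-f_{P_{k-1}}$ with $R^q(\chi_{\HH_k})$, and then invoke Theorem~\ref{posexpchi} together with Lemma~\ref{chitor}. The only cosmetic difference is that the paper computes $\sum_{x\in \langle F_k\rangle\setminus I_k}t^{\dim x}$ directly via inclusion--exclusion, whereas you compute $f_{I_k}$ first and subtract it from $R^q_n(t)$; these are two sides of the same identity.
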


\begin{proof}
Suppose $P$ has rank $n$, and let $F_1,F_2,\ldots, F_m$ be a shelling of $P$. Let $P_k= \langle F_1\rangle \cup \cdots \cup \langle F_{k} \rangle$. Then $f_{P_1}(t)=R^q_n(t)$. With the notation above, we may write $P_k$ as the disjoint union 
$
P_k= P_{k-1}\cup Q_k,
$ 
where 
$$
Q_k= \langle F_k \rangle \setminus \big(\langle H_1\rangle \cup \cdots \cup \langle H_{\ell} \rangle\big),
$$
and where $H_i$ is a facet of {$P_{k-1} \cap F_k$} for each $i$. We prove by induction on $k$ that $P_k$ is $\mathbb{B}(q)$-positive, the case when $k=1$ being obvious. Let $k>1$. 
Since $\langle F_k \rangle \cong \mathbb{B}_n(q)$ we may assume $F_k=\mathbb{B}_n(q)$ in the following computation. By Inclusion-Exclusion,
$$
\sum_{x \in Q_k} t^{\dim(x)}= \sum_{{S \subseteq [\ell]}} (-1)^{|S|}\left(\sum_{x \in \cap_{i \in S}\langle H_i \rangle }t^{\dim(x)} \right)= R^q_n(\chi_\HH)(t). 
$$
Hence 
$$
f_{P_k}(t)=f_{P_{k-1}}(t)+ R^q_n(\chi_\HH)(t),
$$
and the theorem follows from Theorem~\ref{posexpchi},  Lemma~\ref{chitor} and Theorem~\ref{rpossel}. 
\end{proof}
In \cite{alder2010q}, Alder raised the question of finding a suitable notion of $h$-vectors for $q$-posets $P$. In particular it should have the property that the $h$-vector is nonnegative whenever $P$ is shellable, and it should specialize to the common $h$-vector for simplicial complexes when $q=1$. Theorem~\ref{expU} offers such a notion: if $P$ is a $q$-poset of rank $n$, then the $h$-vector of $P$ is defined as the unique vector $(h_0(P), h_1(P), \ldots, h_n(P))$ for which 
$$
f_P(t) = \sum_{k=0}^n h_k(P) \cdot R^q_{n,k}(t).
$$

An interesting class of shellable $q$-posets are related to $q$-matroids \cite{jurrius2018defining}. 

\begin{definition}
A $q$-\emph{matroid} on $\mathbb{B}_n(q)$ is a function $\varphi : \mathbb{B}_n(q) \to \NN$ satisfying 
\begin{enumerate}
    \item $\varphi(x) \leq \dim(x)$, for all $x \in \mathbb{B}_n(q)$, and 
    \item $\varphi(x) \leq \varphi(y)$, for all $x \leq y \in \mathbb{B}_n(q)$, and
    \item $\varphi(x\vee y)+\varphi(x \wedge y) \leq \varphi(x) + \varphi(y)$, for all $x,y \in \mathbb{B}_n(q)$. 
\end{enumerate}
\end{definition}
For example if $0 \leq r \leq n$, then $\varphi : \mathbb{B}_n(q) \to \NN$ defined by 
$$
\varphi(x)= \min(\dim(x),r)
$$
is a $q$-matroid.

The \emph{set of independent spaces} of a $q$-matroid on $\mathbb{B}_n(q)$ is 
$$
P(\varphi)= \{x \in \mathbb{B}_n(q) : \varphi(x)= \dim(x)\}. 
$$
This poset is a pure order ideal of $\mathbb{B}_n(q)$, and hence a pure $q$-poset. 
\begin{theorem}[\cite{Ghorpade}]
\label{qmth}
    If $\varphi$ is a $q$-matroid, then $P(\varphi)$ is shellable. 
\end{theorem}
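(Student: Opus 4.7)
The plan is to adapt Bj\"orner's classical shelling of the independence complex of an ordinary matroid to the $q$-analog setting. The two key classical ingredients---the strong exchange property for bases and a lexicographic shelling order---both admit clean $q$-analogs: the former follows from submodularity together with the usual induction on rank, while the latter is furnished by reduced row echelon form with respect to a chosen complete flag.

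First, I would fix a complete flag $V_1 \subset V_2 \subset \cdots \subset V_n = \FFF_q^n$. Every subspace of $\FFF_q^n$ has a unique reduced row echelon matrix with respect to this flag, giving each facet $B$ of $P(\varphi)$ a canonical encoding by its pivot set $\pi(B) \subseteq [n]$ together with its off-pivot entries. This induces a lexicographic linear order $F_1, F_2, \ldots, F_m$ on the facets. That $P(\varphi)$ is pure, with all facets of dimension equal to the $q$-matroid rank $r := \max_x \varphi(x)$, is standard and follows from axioms (1)--(3) by the exchange argument lifted to subspaces.

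Second, for each facet $F_k$ I would identify an ``externally active'' subspace $R(F_k) \leq F_k$, roughly the span of those $1$-dimensional $y \leq F_k$ with the property that every basis-producing exchange sending $y$ out of $F_k$ yields a facet that is larger than $F_k$ in the lex order. The shelling claim then reduces to exhibiting a collection $\mathcal{H}_k$ of hyperplanes of $F_k$ (those not containing $R(F_k)$) such that
$$
\bigl(\langle F_1\rangle \cup \cdots \cup \langle F_{k-1}\rangle\bigr) \cap \langle F_k\rangle = \bigcup_{H \in \mathcal{H}_k} \langle H\rangle.
$$
Any such identity immediately makes the intersection a pure $q$-subposet of rank $r-1$, as demanded by the definition of shellability.

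The main obstacle is establishing this intersection identity. The containment ``$\supseteq$'' is the easier half: given $H \in \mathcal{H}_k$, one applies the $q$-exchange axiom to produce a basis $F_j$ with $j<k$ containing $H$, using that a $1$-dimensional direction $y \leq R(F_k)$ not lying in $H$ admits a lex-smaller swap out of $F_k$. The reverse containment ``$\subseteq$'' is more delicate: starting from an independent subspace $x \leq F_k \cap F_j$ with $j < k$, one must show that $x$ lies in some $H \in \mathcal{H}_k$, equivalently that $x$ cannot contain $R(F_k)$. The existence of such an $x$ together with $F_j$ must be leveraged to produce an exchange witnessing that some $1$-subspace of $R(F_k)$ admits a lex-smaller swap, contradicting the definition of $R(F_k)$. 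Translating this combinatorial dance from the level of subsets (in the classical proof) to that of subspaces over $\FFF_q$ is where essentially all the work lies, and where the uniqueness of reduced row echelon form is indispensable in keeping the bookkeeping tractable.
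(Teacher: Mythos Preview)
The paper does not prove this theorem at all: it is stated with the citation \cite{Ghorpade} and then immediately used. So there is no ``paper's own proof'' to compare your attempt against.

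As for your proposal itself, it is a plan rather than a proof, and you are candid about this: you locate ``essentially all the work'' in the reverse containment and do not carry it out. The outline is plausible and is indeed in the spirit of Bj\"orner's shelling of the independence complex of an ordinary matroid, which is presumably what the cited reference does in the $q$-setting. But several load-bearing steps are only gestured at. First, you invoke a ``strong exchange property for bases'' of a $q$-matroid and assert it follows from submodularity ``together with the usual induction on rank''; the precise $q$-analog you need (swapping a $1$-dimensional subspace out and another in while preserving the basis property, compatibly with your lex order) is not entirely routine and deserves a statement and proof. Second, your definition of the externally active subspace $R(F_k)$ is left at the level of ``roughly the span of\ldots''; in the classical case external activity is defined elementwise relative to a fixed ordering of the ground set, and the passage to subspaces requires care---in particular it is not obvious that the set of such $1$-dimensional $y$ is closed under span. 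Third, the key identity
\[
\bigl(\langle F_1\rangle \cup \cdots \cup \langle F_{k-1}\rangle\bigr)\cap \langle F_k\rangle \;=\; \bigcup_{H \in \mathcal{H}_k}\langle H\rangle
\]
is asserted to follow from an exchange-and-contradiction argument that you do not execute; this is exactly the step where the subspace bookkeeping differs nontrivially from the set-theoretic case.

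In short: nothing you wrote is wrong, but nothing is proved either. Since the present paper treats the result as a black box, the honest course is either to cite \cite{Ghorpade} as the paper does, or to actually write out the exchange lemma for $q$-matroid bases and the verification of the intersection identity.
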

The rank of a $q$-matroid $\varphi$ on $\mathbb{B}_n(q)$ is $\max\{ \varphi(x) : x \in \mathbb{B}_n(q)\}$. 
From Theorems \ref{expU}, \ref{qmth} and \ref{rpossel} we deduce the following theorem.
\begin{theorem}\label{qmrr}
If $\varphi$ is a $q$-matroid, then $P(\varphi)$ is $\mathbb{B}(q)$-positive. 
In particular the chain polynomials of $P(\varphi)$ and its rank selected {subposet}s are  real-rooted. 
\end{theorem}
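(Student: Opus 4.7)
The plan is to observe that the theorem is a direct corollary of results already in hand, so the task is simply to chain them in the correct order. First, I would invoke Theorem \ref{qmth} (the result of Ghorpade) to obtain that the poset $P(\varphi)$ of independent spaces of the $q$-matroid $\varphi$ is a pure shellable $q$-poset. This is the only step that uses the $q$-matroid axioms in any essential way; everything else is formal.

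Having established shellability, I would apply Theorem \ref{expU} to conclude that $P(\varphi)$ is $\mathbb{B}(q)$-positive. That is, $f_{P(\varphi)}(t)$ expands with nonnegative coefficients in the Rogers--Szeg\H{o}-based basis $\{R^q_{n,k}(t)\}_{k=0}^{n}$, where $n$ is the rank of $\varphi$. This gives the first assertion of the theorem.

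For the second assertion, real-rootedness of the chain polynomial of $P(\varphi)$ follows immediately from Theorem \ref{rposp} applied to the $\mathrm{TN}$-poset $\mathbb{B}(q)$ and the $\mathbb{B}(q)$-positive poset $P(\varphi)$. Real-rootedness for rank-selected sub-posets then follows by invoking Theorem \ref{rpossel}: if $S \subseteq \NN$ contains $0$, then $P(\varphi)_S$ is $\mathbb{B}(q)_S$-positive, and since rank-selected sub-posets of $\mathrm{TN}$-posets are again $\mathrm{TN}$ (Proposition \ref{propranksel}), Theorem \ref{rposp} applies again to give real-rootedness of the chain polynomial of $P(\varphi)_S$.

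The main ``obstacle'' is really not in this proof itself but in the fact that it relies on the nontrivial Theorem \ref{qmth} (cited from \cite{Ghorpade}) and on Theorem \ref{expU}, whose proof required the characteristic-polynomial expansion of Theorem \ref{posexpchi}. Given those, the statement of Theorem \ref{qmrr} reduces to a one-line composition, and I expect the proof in the paper to read essentially as ``follows from Theorems \ref{expU}, \ref{qmth}, and \ref{rpossel}.''
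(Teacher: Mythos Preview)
Your proposal is correct and matches the paper's proof essentially verbatim: the paper simply states that the theorem follows from Theorems \ref{expU}, \ref{qmth}, and \ref{rpossel}. Your expectation at the end was exactly right.
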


\section{\texorpdfstring{$r$}--cubical posets}
\label{sec-r-cubical}
In this section we study $r$-cubical lattices and posets, see \cite{ehrenborg1996r}. 
Let $r$ be a positive integer. Recall the definition of $r$-cubical lattice $\mathbf{C}_r$ in Example \ref{def-r-cube}. If $x \in \mathbf{C}_r$ has rank $n$, then we call the interval $[\zero, x]$ an $r$-\emph{cube} of rank $n$ and denote it by $C_{n,r}$. The rank generating polynomials of $\mathbf{C}_r$ are 
$
    R_n (t) = 1 + t(r+t)^{n-1}.
$
Define 
\begin{equation}
\label{rec-r-cube}
    R_{n,k}(t) = \begin{cases}
        R_n(t) &\mbox{ if } k = 0, \\
        (r-1+t)t^k(r+t)^{n-k-1} &\mbox{ if } 0 < k < n, \\
        t^n &\mbox{ if } k =n.
    \end{cases}\,  
\end{equation}

The next lemma proves that $\mathbf{C}_r$ is a $\mathrm{TN}$-poset. 

\begin{lemma}
\label{matrix-r-cube}
The matrix $R(\mathbf{C}_r)$ is totally nonnegative. Moreover 
 $$
 R_{n+1,k}(t)=R_{n+1,k+1}(t)+ \lambda_{n,k} R_{n,k}(t), \ \ \ \ 0 \leq k \leq n, 
 $$
    where
    $$
    \lambda_{n,k} = 
        \begin{cases}
            1 &\mbox{ if } k=0, \\
            r &\mbox{ if } 1 \leq k < n, \\
            r-1 &\mbox{ if } k = n.
        \end{cases}\,
    $$
\end{lemma}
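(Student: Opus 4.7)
The plan is to use Theorem \ref{eqcon}, which reduces proving total nonnegativity to exhibiting a resolution. The lemma essentially hands us the candidates: the polynomials $R_{n,k}(t)$ in \eqref{rec-r-cube} and the numbers $\lambda_{n,k}$ in the statement. So the proof becomes a direct verification that these data satisfy Definition~\ref{resolv}.

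First, I would check the boundary conditions. The identity $R_{n,0}(t)=R_n(t)$ holds by the product structure of $\mathbf{C}_r$: an element $x$ of rank $n$ has an interval $[\zero,x]\cong C_{n,r}$ whose rank generating polynomial is $1+t(r+t)^{n-1}$ (the summand $1$ accounts for the adjoined $\zero$, while $t(r+t)^{n-1}$ counts elements of the product poset by selecting a rank-$1$ atom in one factor and then any element of $M_r$ in each of the remaining $n-1$ factors). The identity $R_{n,n}(t)=t^n$ is immediate, and divisibility of $R_{n,k}(t)$ by $t^k$ is visible from \eqref{rec-r-cube}.

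Second, I would verify the Pascal-type recursion \eqref{r-pasc}, splitting into three cases matching the piecewise definitions of $R_{n,k}(t)$ and $\lambda_{n,k}$:
\begin{itemize}
\item For $k=0$ and $n\geq 1$, the identity reduces after substitution to
$$t(r+t)^n=t(r+t)^{n-1}\bigl[(r-1+t)+1\bigr],$$
which is trivial; the degenerate case $n=0$ gives $R_{1,0}(t)=R_{1,1}(t)+\lambda_{0,0}\cdot 1=t+1$, as desired.
\item For $1\leq k<n$, after dividing through by the common factor $(r-1+t)t^k(r+t)^{n-k-1}$, the recursion collapses to $(r+t)=t+r$.
\item For $k=n$, we check $(r-1+t)t^n=t^{n+1}+(r-1)t^n$, which is an algebraic triviality.
\end{itemize}
All three computations are one-line polynomial identities, so there is no real obstacle here.

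Finally, having verified that the array $\{R_{n,k}(t)\}$ together with the prescribed $\lambda_{n,k}$ satisfies all three bullets of Definition~\ref{resolv}, the matrix $R(\mathbf{C}_r)$ is resolvable. By Theorem~\ref{eqcon} (the equivalence of (1) and (3) there), $R(\mathbf{C}_r)$ is totally nonnegative. The only mildly delicate point worth double-checking is that the stated $\lambda_{n,k}$ respect condition \eqref{limp} so as to match the uniqueness convention of Remark~\ref{thethe}; since $\lambda_{n,0}=1\neq 0$ throughout, \eqref{limp} is vacuous at $k=0$, and the remaining values are all positive (for $r\geq 2$) or reduce to the Boolean case (for $r=1$, where $\lambda_{n,n}=0$ forces $\lambda_{n+1,n}=0$, which is automatic since our indexing of $\lambda$ only uses $k\leq n$).
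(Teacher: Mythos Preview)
Your proof is correct and follows exactly the paper's approach: verify directly that the polynomials in \eqref{rec-r-cube} satisfy the recursion of Definition~\ref{resolv} with the stated $\lambda_{n,k}$, and then invoke Theorem~\ref{eqcon} to conclude total nonnegativity. Your final paragraph about condition~\eqref{limp} is unnecessary for the lemma and is slightly off for $r=1$ (there $\lambda_{n,n}=0$ while $\lambda_{n+1,n}=r=1$, so \eqref{limp} actually fails; the resolution given is simply not the canonical one of Remark~\ref{thethe} in that degenerate case), but this does not affect the argument.
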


\begin{proof}
It is straightforward to check that the polynomials $R_{n,k}(t)$ satisfy the stated recursion, and hence 
$R(\mathbf{C}_r)$ is resolvable. The lemma now follows from Theorem~\ref{eqcon}. 
\end{proof}

We call  $\mathbf{C}_r$-posets \emph{$r$-cubical posets}. A $2$-cubical poset is called \emph{cubical}.

In \cite{adin1996new} Adin introduced an $h$-vector for cubical posets. Let $f_P(t)$ be the rank generating polynomial of a cubical poset $P$ of rank $n$, i.e., 
$$
f_P(t)= \sum_{x \in P} t ^{\rho(x)}. 
$$ 
The (normalized) cubical $h$-\emph{vector} \cite{adin1996new, athanasiadis2021face, ehrenborg2000flags}  of $P$ may be defined as the vector  $(h_0(P), h_1(P), \ldots, h_n(P))$ for which 

\begin{equation}
\label{q-h-vector}
\begin{split}
   (1+t) h_P(t) = &(-1)^{n} f_P(-1)t^{n+1} + f_P(0) \left( 1 - \left( \frac{1-t}{2} \right)^{n} \right) + \\
                          &\left( \frac{1-t}{2} \right)^{n} f_P \left( \frac{2t}{1-t} \right),
\end{split}
\end{equation}
where $h_P(t)= \sum_{i=0}^nh_i(P)t^i$. 

\begin{theorem}\label{q-h-pos}
Let $P$ be cubical poset. Then $P$ has nonnegative cubical $h$-vector if and only if $P$ is $\mathbf{C}_2$-positive. 
\end{theorem}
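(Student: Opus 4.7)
The plan is to show that the cubical $h$-vector of Adin and the $h$-vector coming from $\mathbf{C}_2$-positivity differ only by a diagonal rescaling with strictly positive entries, whence the two nonnegativity conditions are equivalent.

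First I would verify that the family $\{R_{n,k}(t)\}_{k=0}^n$ in \eqref{rec-r-cube} is a basis of $\RR_n[t]$. Each $R_{n,k}$ is monic of degree $n$ and divisible by $t^k$, and the quotient $R_{n,k}(t)/t^k$ has nonzero constant term ($1$ if $k=0$, $2^{n-k-1}$ if $0<k<n$, $1$ if $k=n$). A triangularity argument then gives linear independence. Hence the $h$-vector $(h_0(P),\ldots,h_n(P))$ in the expansion $f_P(t)=\sum_{k=0}^n h_k(P)\,R_{n,k}(t)$ is well defined, and $\mathbf{C}_2$-positivity amounts to the nonnegativity of these coefficients.

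The core computation is to substitute this expansion into the right-hand side of \eqref{q-h-vector}. The key identities
\begin{equation*}
1+\tfrac{2s}{1-s}=\tfrac{1+s}{1-s}, \qquad 2+\tfrac{2s}{1-s}=\tfrac{2}{1-s},
\end{equation*}
yield, after direct calculation,
\begin{equation*}
\bigl(\tfrac{1-s}{2}\bigr)^n R_{n,k}\!\bigl(\tfrac{2s}{1-s}\bigr)=
\begin{cases}
\bigl(\tfrac{1-s}{2}\bigr)^n+s & k=0,\\[2pt]
\tfrac{1}{2}(1+s)s^k & 0<k<n,\\[2pt]
s^n & k=n.
\end{cases}
\end{equation*}
Using in addition that $f_P(-1)=(-1)^n h_n(P)$ and $f_P(0)=h_0(P)$, the two $((1-s)/2)^n$ contributions in \eqref{q-h-vector} cancel and a common factor of $(1+s)$ emerges in every summand, giving
\begin{equation*}
(1+s)\,\tilde h_P(s) = (1+s)\left[h_0(P)+\sum_{k=1}^{n-1}\tfrac{h_k(P)}{2}\,s^k+h_n(P)\,s^n\right],
\end{equation*}
where $\tilde h_P(s)$ denotes the polynomial encoding Adin's cubical $h$-vector.

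Dividing by $1+s$ shows that the coefficients of $\tilde h_P(s)$ equal $h_0(P)$, $h_k(P)/2$ (for $0<k<n$), and $h_n(P)$, respectively. Since the rescaling factors are strictly positive, $\tilde h_P(s)$ has nonnegative coefficients if and only if each $h_k(P)$ is nonnegative, which is the statement of the theorem. The main technical hurdle is the substitution $t\mapsto 2s/(1-s)$ into each $R_{n,k}$; once the two identities above are on the table, the middle-range case collapses uniformly and all the remaining cancellations are automatic.
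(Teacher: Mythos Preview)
Your proof is correct and follows essentially the same approach as the paper: both compute the action of the linear operator implicit in \eqref{q-h-vector} on the basis polynomials $R_{n,k}(t)$ and find $H(R_{n,k})(t)=t^k$ for $k\in\{0,n\}$ and $H(R_{n,k})(t)=t^k/2$ for $0<k<n$. The paper merely states this is ``straightforward to check'' while you carry out the substitution $t\mapsto 2s/(1-s)$ explicitly, but the argument is identical.
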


\begin{proof}
Notice that \eqref{q-h-vector} defines a unique bijective linear operator 
$H : \RR_n[t] \to \RR_n[t]$ satisfying $H(f_P)(t) = h_P(t)$ for all {cubical posets} $P$ of rank $n$. It is straightforward to check that 
$$
H(R_{n,k})(t)= \begin{cases}
t^k/2 &\mbox{ if } 0<k<n,  \\
t^k &\mbox{ otherwise,}
\end{cases}
$$
from which the theorem follows. 
\end{proof}
Adin \cite{adin1996new} proved that shellable cubical complexes have nonnegative cubical $h$-vectors. Athanasiadis \cite{athanasiadis2021face} proved that the $f$-polynomial of the barycentric subdivision of any cubical complex with nonnegative cubical $h$-vector is real-rooted, thus confirming a conjecture of Brenti, Mohammadi and Welker \cite{brenti2008f}. An alternative proof of Athanasiadis' result now follows directly from {the following theorem.}
\begin{theorem}
If $P$ is a cubical poset with nonnegative cubical $h$-vector, then the chain polynomial of $P$ is real-rooted. Moreover if $S$ is a set of positive integers, then the chain polynomial of $P_S$ is real-rooted. 
\end{theorem}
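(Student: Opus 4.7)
The plan is to assemble the theorem directly from results already established in the preceding sections, with essentially no new work beyond pattern matching. The key insight is that we have all the ingredients in place: $\mathbf{C}_2$ is a $\mathrm{TN}$-poset by Lemma~\ref{matrix-r-cube}, cubical posets with nonnegative cubical $h$-vector are precisely the $\mathbf{C}_2$-positive posets by Theorem~\ref{q-h-pos}, and $\mathrm{TN}$-positivity of a poset implies real-rootedness of its chain polynomial by Theorem~\ref{rposp}.

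Concretely, I would proceed as follows. First, assume $P$ is a cubical poset with nonnegative cubical $h$-vector. By Theorem~\ref{q-h-pos}, this is equivalent to $P$ being $\mathbf{C}_2$-positive. Since $\mathbf{C}_2$ is a $\mathrm{TN}$-poset by Lemma~\ref{matrix-r-cube}, Theorem~\ref{rposp} immediately applies and yields that all zeros of the chain polynomial of $P$ lie in $[-1,0]$, and in particular the chain polynomial of $P$ is real-rooted.

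For the moreover statement, let $S$ be a set of positive integers (I would also include $0$ in $S$, as is the convention in Section~\ref{sec-tn}, since adding $0$ does not change the chain polynomial up to the factor from $\zero$). By Theorem~\ref{rpossel}, if $P$ is $\mathbf{C}_2$-positive then $P_S$ is $(\mathbf{C}_2)_S$-positive. By Proposition~\ref{propranksel}, $(\mathbf{C}_2)_S$ is again a $\mathrm{TN}$-poset. Applying Theorem~\ref{rposp} once more, now to the $\mathrm{TN}$-poset $(\mathbf{C}_2)_S$ and the $(\mathbf{C}_2)_S$-positive poset $P_S$, we conclude that the chain polynomial of $P_S$ has all zeros in $[-1,0]$, and in particular is real-rooted.

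Since each step is a direct invocation of a previously established result, there is no substantive obstacle. The only minor point requiring care is the convention on $S$: Section~\ref{sec-tn} defines rank selection for subsets of $\NN$ containing $0$, so to invoke Theorem~\ref{rpossel} verbatim one should interpret $S$ as $S \cup \{0\}$, which only affects the chain polynomial by a trivial factor of $t$ coming from the minimum element and hence preserves real-rootedness.
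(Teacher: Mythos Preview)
Your proposal is correct and matches the paper's own proof, which simply reads ``Apply Theorems~\ref{rposp}, \ref{rpossel} and \ref{q-h-pos}.'' You have unpacked exactly these invocations (adding the explicit mention of Lemma~\ref{matrix-r-cube} and Proposition~\ref{propranksel}, which are implicit in the paper's proof), and your remark about adjoining $0$ to $S$ is a reasonable bookkeeping observation.
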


\begin{proof}
Apply Theorems \ref{rposp}, \ref{rpossel} and \ref{q-h-pos}.  
\end{proof}

Lemma~\ref{matrix-r-cube} and Theorem~\ref{q-h-pos} suggest how to define a suitable $r$-cubical $h$-vector. 

 \begin{definition}%[$r$-cubical $h$-vector]
 \label{qh-def}
 Let $P$ be an $r$-cubical poset of rank $n$. The $r$-cubical $h$-vector of $P$ is the unique vector $(h_0(P),h_1(P), \ldots, h_n(P))$ for which 
 $$
 f_P(t) = \sum_{k=0}^n h_k(P)\cdot R_{n,k}(t),
 $$ 
 where $R_{n,k}(t)$ is defined by \eqref{rec-r-cube} for the matrix $R=R(\mathbf{C}_r)$. 
 \end{definition}
By the proof of Theorem~\ref{q-h-pos} we see that for $r=2$, the entries of the cubical $h$-vector \eqref{q-h-vector} and Definition \ref{qh-def}  agree up to a factor of $2$. 

\subsection{Shellings of $r$-cubical posets}
We want to prove that C-shellable $r$-cubical posets have nonnegative $h$-vectors, thus extending Adin's result. To do this we need to understand C-shelling of $r$-cubical posets better.

Suppose $P$ is an $r$-cubical poset. For each $y \in P \setminus \{\zero\}$, we identify $y$ with an element of $\mathbf{C}_r$ of the same rank.   If $x \prec y \in P$, let 
$
\gamma(x,y) 
$
be the index for which the coordinate of $x$ and $y$ differ (as elements of $\mathbf{C}_r$). 
For $x = (\xi_1,\xi_2,\ldots) \in \mathbf{C}_r \setminus \{\zero\}$, let $E(x) = \{ i : \xi_i = z\}$. 

The next lemma describes all C-shellable pure order ideals of $\mathbf{C}_r$ which are generated by elements in $[\zero, y]$, $y \in \mathbf{C}_r$, of co-rank one. 
\begin{lemma}\label{indshelemma}
Suppose $y$ is an element of $\mathbf{C}_r$ of rank $n+1$, where $n \geq 2$. 
Let $X$ be a set of elements that are covered {by} $y$.  Then there exists a C-shelling of $[\zero, y)$ where the elements of $X$ come first if and only if 
\begin{itemize} 
\item[(i)] there is  $x \in X$ for which $\gamma(x,y) \neq \gamma(x',y)$ for all other  $x' \in X$, or 
\item[(ii)] $\{ \gamma(x,y) : x \in X\} = E(y)$.
%$H_j= [\zero, y_j)$. 
\end{itemize}
\end{lemma}
\begin{proof}
We first prove the sufficiency of (i) or (ii) by induction over $n \geq 2$. Suppose $n \geq 2$, and that $X$ satisfies (i) or (ii). Let $X' \subseteq X$ be a maximal subset of $X$ for which 
$\gamma(x',y) \neq \gamma(x,y)$ for all distinct $x$ and $x'$ in $X'$. Let further $L: x_1, \ldots, x_m$ be any ordering of the facets of $[\zero, y)$ satisfying 
\begin{enumerate}
\item the elements of $X'$ form an initial segment of $L$, 
\item the elements of $X$ form an initial segment of $L$, and 
\item if $X$ satisfies (i), then fix an element $x \in X$ which satisfies (i). Then the set of facets $x'$ of $[\zero, y)$ for which $\gamma(x',y)=\gamma(x,y)$ is required to form a final segment of $L$.
\end{enumerate}
We shall prove that $L$ is a C-shelling. Suppose $x$ and $x'$ are two distinct facets of $[\zero, y)$. If  $\gamma(x,y)=\gamma(x', y)$, then $\langle x \rangle \cap \langle x' \rangle = \{\zero \}$.  If $\gamma(x,y) \neq \gamma(x', y)$, then $\langle x \rangle \cap \langle x' \rangle =\langle w \rangle$, where $w= x \wedge x'$ is obtained from $x$ by replacing the $z$ in position $\gamma(x',y)$ to the entry in position $\gamma(x',y)$ of $x'$. Hence $\rho(w)= \rho(x)-1$. 
%Suppose $x=x^{(1)}$ is an element that satisfies (i). 
%Let $x^{(1)}, x^{(2)}, \ldots, x^{(m)}, x^{(m+1)}, \ldots, x^{(\ell)}$ be an ordering of the facets of $[\zero, y)$, starting with the elements of $X$, where $x^{(m+1)}, \ldots, x^{(\ell)}$ are precisely the facets 
%$x' \neq x$ of  $[\zero, y)$ for which $\gamma(x',y)= \gamma(x,y)$. 
By this observation, and the construction of $L$, it follows that
$$
H_j = \langle x_j \rangle \cap \bigcup_{i<j} \langle x_i \rangle  
$$
is pure of rank $n-1$ for each $2 \leq j \leq \ell$. If $n=2$ this proves that $L$ is a C-shelling. 
Suppose $n>2$. The set of facets of 
$H_j$ is  
$$
X'' = \{\pi_j(x_i) : i<j \mbox{ and } \gamma(x_i,y) \neq \gamma(x_j,y) \}, 
$$
where $\pi_j(x_i)$ is obtained from $x_i$ by exchanging the $z$ in position $\gamma(x_j,y)$ to the coordinate in position $\gamma(x_j,y)$ of $x_j$. By construction, $X''$ satisfies (i) or (ii), and hence the sufficiency follows by induction. 
%
%
%By construction $X'$ satisfies (i) or (ii) (for $y= x^{(j)}$) depending on if $j \leq m$ or $j > m$. The case when $x=x^{(1)}$ is an element that satisfies (i) now follows by induction over $n$. 
%
%Suppose $X$ satisfies (ii). Let $x^{(1)}, x^{(2)}, \ldots, x^{(m)}, x^{(m+1)}, \ldots, x^{(\ell)}$ be an ordering of the facets of $[\zero, y)$, starting with the elements of $X$, where $x^{(m+1)}, \ldots, x^{(\ell)}$ are precisely the facets 
%$x' \neq x$ of  $[\zero, y)$ for which $\gamma(x',y)= \gamma(x,y)$. 

The necessity of (i) and (ii) follows from the following claim: let $n\geq 2$ be an integer. If $X$ is a set of facets of $[\zero, y)$, where $y \in \mathbf{C}_r$ has rank $n+1$, satisfying  
\begin{itemize}
\item[(I)] for each $x \in X$, there exists $x' \in X\setminus \{x\}$ for which $\gamma(x,y) = \gamma(x',y)$, and 
\item[(II)] $\{ \gamma(x,y) : x \in X\} \neq E(y)$, 
\end{itemize}
then there is no shelling of $[\zero, y)$ that starts with the elements of $X$. 

We prove the claim by induction over $n \geq 2$. If $n=2$, we may assume $y=(z,z)$ and that 
$\gamma(x,y)= 2$ for all $x \in X$, where $|X| \geq 2$. It follows that $\langle x \rangle \cap \langle x' \rangle = \{\zero\}$ for all distinct $x$ and $x'$ in $X$, which proves the case when $n=2$. 
Suppose $n >2$, and that $X$ satisfies (I) and (II). Suppose $x_1, x_2, \ldots, x_\ell$ is an ordering of $X$ which extends to a C-shelling of $[\zero, y)$. Then the facets of $H_\ell$ are precisely the elements 
$$
X''= \{\pi_\ell(x_i) : 1\leq i<\ell \mbox{ and } \gamma(x_i,y) \neq \gamma(x_\ell,y) \}.
$$
By construction $X''$ satisfies (I) and (II) for $y = x_\ell$, and hence the claim follows by induction. 
\end{proof}

From Lemma \ref{indshelemma} we deduce the following characterization of C-shellings of $r$-cubical posets. 

\begin{theorem}\label{r-cube-shelling}
Let $P$ be an $r$-cubical poset. Then a list $y_1, \ldots, y_m$ of the facets of $P$ is a C-shelling of $P$ if and only if for each $j >1$, the {subposet} 
$$
H_j = \langle y_j \rangle \cap \bigcup_{i<j} \langle y_i \rangle  
$$
of $\langle y_j \rangle$ is pure of rank $\rho(P)-1$,  and either
\begin{itemize} 
\item[(i)] there is a facet $x$ of $H_j$ for which $\gamma(x,y_j) \neq \gamma(x',y_j)$ for all other facets $x'$ of $H_j$, or 
\item[(ii)] for each $i \in E(y_j)$ there is a facet $x \in H_j$ for which 
$\gamma(x,y_j)=i$. 
%$H_j= [\zero, y_j)$. 
\end{itemize}
\end{theorem}

%Suppose $y_1, \ldots, y_m$ is a C-shelling of an $r$-cubical poset $P$. Let $j>1$. For each $i \in %E(y_j)$, let 
%$
%\alpha_i(y_j) 
%$
%be the number of maximal elements $x$ of $H_j$ for which $\gamma(x,y_j)=i$. The \emph{type} of the %$j$th step is defined to be the vector $\alpha^j = (\alpha_i(y_j))_{i \in E(y_j)}$. 

\begin{theorem}
Let $P$ be a pure  $r$-cubical poset of rank $n$. If $P$ is C-shellable, then $P$ has a nonnegative $r$-cubical $h$-vector. In particular, $P$ is $\mathbf{C}_r$-positive. 
\end{theorem}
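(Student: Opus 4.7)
The plan is to induct on the shelling. Setting $P_k := \langle F_1 \rangle \cup \cdots \cup \langle F_k \rangle$, I will show that each $P_k$ is $\mathbf{C}_r$-positive; since $r$-cubical $h$-nonnegativity is by Definition~\ref{qh-def} exactly $\mathbf{C}_r$-positivity, the theorem follows. The base case $k=1$ is immediate: $\langle F_1\rangle \cong C_{n,r}$, so $f_{P_1}(t)=R_{n,0}(t)$. For the inductive step, write
\[
f_{P_k}(t) \;=\; f_{P_{k-1}}(t) \;+\; f_{Q_k}(t), \qquad Q_k := \langle F_k\rangle \setminus P_{k-1}.
\]
By hypothesis the first summand has a nonnegative expansion in $\{R_{n,j}\}$, so it is enough to show the same for $f_{Q_k}(t)$.

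First I would compute $f_{Q_k}(t)$ directly. Identifying $\langle F_k\rangle$ with $C_{n,r}$ and setting $c_i := r - b_i \in \{0,1,\dots,r\}$, an element $(y_1,\dots,y_{n-1})$ of $\langle F_k\rangle\setminus\{\hat 0\}$ lies in some $\langle H_{i,\alpha}\rangle$ precisely when $y_i\in\mathcal F_i$ for some $i$. Hence $Q_k$ consists of tuples with $y_i\notin\mathcal F_i$ for all $i$, and a direct enumeration yields
\[
f_{Q_k}(t) \;=\; t\prod_{i}(c_i + t) \;=\; t\,\prod_{j=0}^{r}(r-j+t)^{a_j}.
\]

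Now I would invoke condition $(\mathrm S_2)$. If $a_r = n$, then every $c_i=0$ and the product collapses to $f_{Q_k}(t)=t^n = R_{n,n}(t)$. Otherwise $a_1\ge 1$, so some factor equals $(t+r-1)$; pull it out:
\[
f_{Q_k}(t) \;=\; (t+r-1)\,t\,\prod_{i\neq i_0}(c_i + t).
\]
The key algebraic lever is the convex-combination identity
\[
(t+c) \;=\; \tfrac{c}{r}\,(t+r) \;+\; \tfrac{r-c}{r}\,t, \qquad 0\le c\le r,
\]
which expresses every remaining factor as a nonnegative combination of $(t+r)$ and $t$. Expanding the product $\prod_{i\ne i_0}(c_i+t)$ via this identity produces a nonnegative combination of the monomials $t^{\ell}(t+r)^{m-\ell}$ with $m$ the number of remaining factors. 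Multiplying back by $t$ shifts the exponent of $t$ up by one, and then multiplying by $(t+r-1)$ turns each term into
$(t+r-1)\,t^{k}(t+r)^{n-k-1} = R_{n,k}(t)$ with $1\le k\le n-1$. The coefficients remain nonnegative throughout, so $f_{Q_k}(t)$ has a nonnegative expansion in $\{R_{n,j}\}_{j=1}^{n}$ (notably, with $h_0=0$, consistent with $\hat 0\notin Q_k$), and combining with the inductive hypothesis finishes the step.

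The main obstacle is the explicit enumeration of $Q_k$ and the recognition that the convex-combination identity is the right algebraic tool: once it is in hand, condition $(\mathrm S_2)$ is precisely what guarantees that either the entire product collapses to $t^n = R_{n,n}$, or that a factor $(t+r-1)$ is available to pull out and convert the remaining product of $(t+r)$'s and $t$'s into the basis $\{R_{n,k}\}$. A minor subtlety is bookkeeping the shifted indexing of positions versus the rank (so that the $t$ factor out front correctly accounts for the $+1$ offset in the rank of $C_{n,r}$); this is only a matter of unravelling conventions.
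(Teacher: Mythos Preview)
Your proposal is correct and follows essentially the same route as the paper's proof: induct along the shelling, compute the rank generating polynomial of $Q_k=\langle F_k\rangle\setminus P_{k-1}$ as a product over coordinates, split according to $(\mathrm S_2)$, and in the $a_1\ge 1$ case use the convex identity $t+c=\tfrac{c}{r}(t+r)+\tfrac{r-c}{r}t$ to expand the remaining factors and recognize $(t+r-1)t^k(t+r)^{n-k-1}=R_{n,k}(t)$. Your explicit leading factor of $t$ (coming from $\hat 0\notin Q_k$ and the rank shift) is exactly what the paper obtains when it writes $S_2(t)=t(t+r-1)g(t)$; you have just tracked the off-by-one between the $n-1$ coordinates of $C_{n,r}$ and the rank $n$ a bit more carefully, which is precisely the ``bookkeeping subtlety'' you flag.
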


\begin{proof}
Let $y_1,\ldots, y_m$ be a C-shelling of $P$. We prove that the $r$-cubical $h$-vector of $P_j=\langle y_1\rangle \cup \cdots \cup \langle y_j\rangle$ is nonnegative by induction over $j$. If 
$j=1$, then $f_{P_j}(t)= R_{n,0}(t)$ and hence the $h$-vector is $(1,0,0,\ldots)$. 

Let $j \geq 2$. Then 
$$
f_{P_j}(t)= f_{P_{j-1}}(t)+ \sum_{x \in \langle y_j \rangle \setminus (P_{j-1} \cap \langle y_j \rangle)} t^{\rho(x)}. 
$$
By induction $f_{P_{j-1}}(t)$ has a nonnegative expansion in the polynomials $R_{n,k}(t)$. It remains to prove that the second summand, which we name $S_2(t)$, also does. For $i \in E(y_j)$, let  $A_i$ be the set of all $a \in [r]$ for which there is a {facet} of $P_{j-1} \cap \langle y_j \rangle$ with an $a$ in the $i$th coordinate.  Then 
$x \in \langle y_j \rangle \setminus (P_{j-1} \cap \langle y_j \rangle)$ if and only if the $i$th coordinate of $x$ is in  $\{z\} \cup [r] \setminus A_i$ for each $i \in E(y_j)$. Hence 
$$
S_2(t)= t\! \!\!\prod_{i \in E(y_j) } (t+ |[r]\setminus A_i|) = t\! \!\!\prod_{i \in E(y_j)} (t+ r-|A_i|). 
$$
By Theorem \ref{r-cube-shelling}, either $|A_i|=1$ for some $i$, or $|A_i|>0$ for all $i$.  If $|A_i|=1$ for some $i$, then we may write $S_2(t)= t(t+r-1)g(t)$, where $g(t)$ is a product of factors of the form $t+i$, where $0 \leq i \leq r$. Since 
\begin{equation}\label{rit}
t+i = \frac {r-i} r t + \frac i r (r+t),
\end{equation}
$g(t)$ has a nonnegative expansion in $t^j (r+t)^{n-2-j}$, and thus $S_2(t)$ has a nonnegative expansion in $R_{n,k}(t)$. 

Similarly, if $|A_i|>0$ for all $i$, then we may write $S_2(t)= t h(t)$, where $h(t)$ is a product of factors of the form $t+i$, where $0 \leq i \leq r-1$. By the same argument as before, it follows that 
we may write 
$$
t h(t) = \sum_{k=0}^{n-1} a_k t^{k+1} (r-1+t)^{n-1-k} = a_{n-1}t^{n} + (r-1+t)\sum_{k=0}^{n-2} a_k t^{k+1} (r-1+t)^{n-2-k}
$$
where $a_k \geq 0$ for each $k$. By \eqref{rit}, each term $t^{k+1} (r-1+t)^{n-2-k}$ may be expanded nonnegatively in $t^i(r+t)^{n-1-i}$, $ 1 \leq i \leq n-1$. Hence $S_2(t)$ has a nonnegative expansion in $R_{n,k}(t)$. This finishes the proof by induction. 

\end{proof}

\begin{corollary}
    If $P$ is C-shellable $r$-cubical poset, then chain polynomial of (any rank-selected {subposet} of) $P$ is real-rooted. 
\end{corollary}

\section{Dual partition lattices}
\label{sec-part}

Recall that the dual partition lattice of rank $n$, $\Pi_{n+1}'$, is the poset of all partitions of  $[n+1]$, with $\pi \leq \sigma$ if every block of $\sigma$ is contained in a block of $\pi$.
The \emph{infinite dual partition lattice} $\Pi'$ is the set of all partitions of  $\{1,2,\ldots \}$ into finitely many blocks, with $\pi \leq \sigma$ if every block of $\sigma$ is contained in a block of $\pi$. Hence $\Pi'$ is rank uniform with $R=R(\Pi')= (S(n+1,k+1))_{n,k=0}^\infty$. It is well known that $R$ is $\mathrm{TN}$. We will now find a combinatorial description of the polynomials $R_{n,k}(t)$, which gives another proof of the total nonnegativity of $R$. 

Recall that the \emph{chromatic polynomial} of a simple graph $G=(V,E)$ may be expressed as 
$$
\chi_G(t)= \sum_{k=1}^n S_G(k) \cdot (t)_k, \ \ \ |V|=n, 
$$
where $(t)_k = t(t-1)\cdots (t-k+1)$, and $S_G(k)$ is the number of partitions of $V$ {with $k$ blocks} such that no block contains an edge from $E$. The \emph{$\sigma$-polynomial} of $G$ is defined by 
$$
\sigma_G(t)= \sum_{k=1}^n S_G(k) \cdot t^k .
$$
Hence $\sigma_G(t)=\mathcal{S}(\chi_G(t))$,  where $ \mathcal{S} : \RR[t] \to \RR[t]$ is the invertible linear map defined by $ \mathcal{S}((t)_k) = t^k$ for all $k \in \NN$.

 Let $B_{n,k}$ be the graph on $[n]$ with edges $\{i, j\}$, $1\leq i<j \leq k$, and notice that $\chi_{B_{n+1,k+1}}(t)= t^{n-k}(t)_{k+1}$. 

\begin{figure}[H]
\centering
\begin{tikzpicture}[line cap=round,line join=round,>=triangle 45,x=1cm,y=1cm]
	\clip(-9,-2) rectangle (2.5,2);
	\draw [line width=1pt] (-8,1)-- (-6,-1);
	\draw [line width=1pt] (-6,-1)-- (-6,1);
	\draw [line width=1pt] (-6,1)-- (-8,1);
	\draw [line width=1pt] (-8,1)-- (-8,-1);
	\draw [line width=1pt] (-8,-1)-- (-6,1);
	\draw [line width=1pt] (-8,-1)-- (-6,-1);
	\draw [line width=1pt] (0,1)-- (-1,-1);
	\draw [line width=1pt] (-1,-1)-- (1,-1);
	\draw [line width=1pt] (1,-1)-- (0,1);
	\draw (-8.5,1.4) node[anchor=north west] {1};
	\draw (-6,1.4) node[anchor=north west] {2};
	\draw (-8.5,-1.1) node[anchor=north west] {3};
	\draw (-6,-1.1) node[anchor=north west] {4};
	\draw (-5.072039820114968,-0.2858820632730748) node[anchor=north west] {5};
	\draw (-4.052339710155575,-0.2749175459616835) node[anchor=north west] {6};
	\draw (-3.0545686348189647,-0.23105947671611815) node[anchor=north west] {7};
	\draw (-0.06125540880913355,1.5013342584837122) node[anchor=north west] {1};
	\draw (-1.3550684515533096,-1.1) node[anchor=north west] {2};
	\draw (1,-1.1) node[anchor=north west] {3};
	\draw (1.9891093284210437,-0.2749175459616835) node[anchor=north west] {4};
	\begin{scriptsize}
		\draw [fill=ududff] (-8,1) circle (2.5pt);
		\draw [fill=ududff] (-8,-1) circle (2.5pt);
		\draw [fill=ududff] (-6,1) circle (2.5pt);
		\draw [fill=ududff] (-6,-1) circle (2.5pt);
		\draw [fill=xdxdff] (-5,0) circle (2.5pt);
		\draw [fill=xdxdff] (-4,0) circle (2.5pt);
		\draw [fill=xdxdff] (-3,0) circle (2.5pt);
		\draw [fill=ududff] (-1,-1) circle (2.5pt);
		\draw [fill=xdxdff] (0,1) circle (2.5pt);
		\draw [fill=ududff] (1,-1) circle (2.5pt);
		\draw [fill=xdxdff] (2,0) circle (2.5pt);
	\end{scriptsize}
\end{tikzpicture}
\caption{The graphs of $B_{7,4}$ and $B_{4,3}$, respectively.}
\label{fig:partition-hyperplanes}
\end{figure}
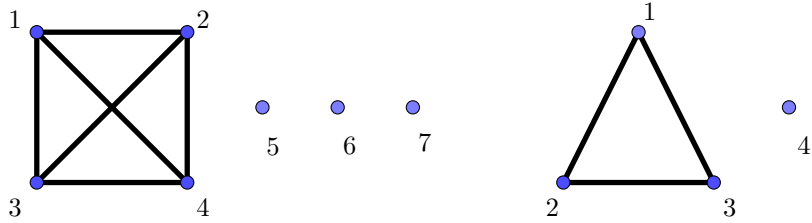

\begin{theorem}\label{partnk}
If $0\leq k \leq n$, then  
$$
R_{n,k}(t) = \sigma_{B_{n+1,k+1}}(t)/t=  \mathcal{S}\left(t^{n-k}(t)_{k+1}\right)/t.  
$$
Moreover 
$$
R_{n+1,k}(t)= R_{n+1,k+1}(t)+ (k+1) R_{n,k}(t), \ \ \ 0\leq k \leq n. 
$$
\end{theorem}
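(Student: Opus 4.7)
The plan is to check directly that the polynomials $R_{n,k}(t):=\mathcal{S}\bigl(t^{n-k}(t)_{k+1}\bigr)/t$ satisfy the four conditions of Definition \ref{resolv} with $\lambda_{n,k}=k+1$, and then invoke the uniqueness clause (Remark \ref{thethe}) to conclude that these are the polynomials associated to the matrix $R=(S(n+1,k+1))_{n,k=0}^{\infty}$. The second displayed identity of the theorem will drop out for free during the verification, and at the same time we obtain another proof that $R$ is $\mathrm{TN}$ via Theorem \ref{eqcon}.

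First I would dispatch the boundary cases. For $k=n$, $\mathcal{S}((t)_{n+1})/t = t^{n+1}/t = t^n$, so $R_{n,n}(t)=t^n$. For $k=0$, the classical identity $t^{n+1}=\sum_{j}S(n+1,j)(t)_j$ gives $\mathcal{S}(t^{n+1})=\sum_j S(n+1,j)t^j$, and dividing by $t$ yields $R_{n,0}(t)=\sum_{k=0}^n S(n+1,k+1)t^k = R_n(t)$, as desired.

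Next I would prove the divisibility $t^k \mid R_{n,k}(t)$, equivalently $t^{k+1}\mid \sigma_{B_{n+1,k+1}}(t)$. This is purely combinatorial: the graph $B_{n+1,k+1}$ contains the clique $K_{k+1}$ on $\{1,\dots,k+1\}$, so every partition of $[n+1]$ whose blocks are independent sets in $B_{n+1,k+1}$ must place these $k+1$ vertices in distinct blocks. Therefore $S_{B_{n+1,k+1}}(j)=0$ for $j<k+1$, and $t^{k+1}$ divides $\sigma_{B_{n+1,k+1}}(t)$.

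Finally, the Pascal-like recurrence, which is the crux. Using $(t)_{k+2}=(t)_{k+1}(t-k-1)$,
\begin{align*}
t^{n+1-k}(t)_{k+1}-t^{n-k}(t)_{k+2}
&= t^{n-k}(t)_{k+1}\bigl(t-(t-k-1)\bigr)\\
&= (k+1)\,t^{n-k}(t)_{k+1}.
\end{align*}
Applying the operator $\mathcal{S}/t$ to both sides gives $R_{n+1,k}(t)-R_{n+1,k+1}(t)=(k+1)R_{n,k}(t)$, which is precisely the second displayed identity of the theorem and matches the recursion \eqref{r-pasc} with $\lambda_{n,k}=k+1\ge 0$. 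Since every $\lambda_{n,k}$ is strictly positive, condition \eqref{limp} is vacuous, so Remark \ref{thethe} identifies our $R_{n,k}(t)$ with the canonical polynomials resolving $R$; as a bonus, Theorem \ref{eqcon} then gives an independent proof that $R=(S(n+1,k+1))$ is totally nonnegative. There is no serious obstacle — the only real step of insight is recognising that factoring $(t)_{k+2}$ makes the $(k+1)$ drop out cleanly.
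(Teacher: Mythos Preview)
Your proof is correct and follows essentially the same route as the paper's: both verify the base case $R_{n,0}(t)=\sum_k S(n+1,k+1)t^k$ and then establish the recursion by the elementary identity $t^{n+1-k}(t)_{k+1}-t^{n-k}(t)_{k+2}=(k+1)\,t^{n-k}(t)_{k+1}$, applying $\mathcal{S}/t$. Your write-up is slightly more explicit than the paper's---you separately check $R_{n,n}(t)=t^n$, the divisibility $t^k\mid R_{n,k}(t)$, and you invoke the uniqueness clause of Remark~\ref{thethe} directly---whereas the paper leaves these routine verifications to the reader; but the substance is identical.
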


\begin{proof}
For $k=0$,
$$
\sigma_{B_{n+1,1}}(t)/t = \sum_{k=0}^n S(n+1,k+1)t^k = R_{n,0}(t). 
$$
It remains to prove the identity 
$$
\sigma_{B_{n+2,k+1}}(t)= \sigma_{B_{n+2,k+2}}(t)+ (k+1)\sigma_{B_{n+1,k+1}}(t),  
$$
or equivalently, 
$$
t^{n+1-k}(t)_{k+1}= t^{n-k}(t)_{k+2} + (k+1) t^{n-k}(t)_{k+1},
$$
which is trivially true.  
\end{proof}
We call $\Pi'$-posets \emph{partition posets}.  The $h$-vector of a partition poset $P$ of rank $n$ is the vector $(h_0(P), h_1(P), \ldots, h_n(P))$ for which 
$$
f_P(t)= \sum_{x \in P}t^{\rho(x)} = \sum_{k=0}^n h_k(P) \cdot R_{n,k}(t).
$$
Since dual partition lattices are lower semimodular, a partition poset is C-shellable if and only it satisfies (S1). 
Suppose (S1) is satisfied  for an ordering $y_1, \ldots, y_m$ of the facets of a partition poset $P$ of rank $n$. Identify $\langle y_k \rangle$ with $\Pi_{n+1}'$. 
An element $\pi_e \in \Pi_{n+1}'$ of rank $n-1$ has precisely $n-1$ blocks with one element each, and a single block $e=\{ i, j \}$. Hence we may identify $P_k=\langle y_k\rangle \cap (\langle y_1 \rangle \cup \cdots \cup \langle y_{k-1} \rangle)$ with a simple graph $G=G(P_k)=([n+1], E)$. Express the rank generating polynomial of 
$P_k$ as 
$$
f_{P_k}(t) = f_{P_{k-1}}(t)+ \sum_{x \in Q(G)} t^{\rho(x)},
$$
where 
$$
Q(G) =\langle y_k \rangle \setminus \left(P_{k-1} \cap \langle y_k \rangle\right) = \langle y_k \rangle \setminus  \cup_{e \in E} \langle \pi_{e} \rangle. 
$$
Now $\pi \in  Q(G)$ if and only if no block of $\pi$ contains an edge in $E$. Hence 
$$
\sum_{x \in Q(G)} t^{\rho(x)} = \sigma_G(t)/t = \mathcal{S}(\chi_G(t))/t.
$$
Next we examine properties of shellings that guarantee  that the $h$-vector of nonnegative coefficients. In light of the discussion  above and Theorem~\ref{partnk}, a reasonable condition is then that $\chi_G(t)$ has a nonnegative expansion in $\{ t^{n-k}(t)_{k+1} \}_{k=0}^n$. This is true for all graphs with at most $10$ vertices. This led us to ask if all graphs have a nonnegative expansion in $\{ t^{n-k}(t)_{k+1} \}_{k=0}^n$. However G. Royle and E. Gioan found that the complete bipartite $K_{7,7}$ fails to have a nonnegative expansion \cite{Royle}. Notice that this does not imply that there exist C-shellable partition posets that fail to have nonnegative $h$-vectors. 

\begin{problem}\label{partposprob}
    Do all C-shellable partition posets have nonnegative $h$-vectors? 
\end{problem}

Since wedon't have an answer to Problem \ref{partposprob}, we seek more restrictive versions of shellability for partition posets that imply nonnegative $h$-vectors. 

\emph{Chordal graphs} may be defined recursively as follows. The empty graph (the graph with no vertices or edges) is chordal. A nonempty graph $G=(V,E)$ is chordal if and only if there exists a vertex $v \in V$ such that the neighbors of $v$ form a clique and $G \setminus v$ is chordal.

\begin{definition}
\label{partshelling}
Let $P$ be a pure partition poset. A total order $y_1, \ldots, y_m$ on the facets of $P$ is called a \emph{$\Pi$-shelling} if  (S1) is satisfied, and 
\begin{itemize}
\item[(C)] for all $1<i \leq m$ the graph $G(y_i)$ defined above is chordal. 
\end{itemize}    
\end{definition}

\begin{lemma}
\label{chordal-nonnegative}
    Let $G$ be a chordal graph on $n$ vertices. Then the chromatic polynomial of $G$ has a nonnegative expansion in $\{ t^{n-k}(t)_{k} \}_{k=1}^n$. 
\end{lemma}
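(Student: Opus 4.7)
The plan is to induct on the number of vertices $n$, exploiting the existence of a simplicial vertex in any chordal graph. To close the induction I will prove the slightly strengthened claim that
\[
\chi_G(t) \;=\; \sum_{k=\omega(G)}^{n} b_k \cdot t^{n-k}(t)_k \qquad \text{with } b_k \geq 0,
\]
where $\omega(G)$ is the clique number of $G$. The lemma then follows by forgetting the vanishing information. The base case $n=1$ is immediate: $\chi_G(t)=t=t^0(t)_1$ and $\omega(G)=1$.

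For the inductive step I would pick a simplicial vertex $v$ of $G$, whose $d$ neighbors form a clique, so that $\chi_G(t)=(t-d)\chi_{G\setminus v}(t)$ and $G\setminus v$ is chordal on $n-1$ vertices. The crucial observation is that the $d$-clique on the neighborhood of $v$ still lies inside $G\setminus v$, whence $\omega(G\setminus v)\geq d$.

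The computational backbone is the identity
\[
(t-d)\, t^{n-1-k}(t)_k \;=\; \frac{k-d}{k}\, t^{n-k}(t)_k \;+\; \frac{d}{k}\, t^{n-k-1}(t)_{k+1}, \qquad k\geq 1,
\]
obtained from $(t-d)(t)_k=(t)_{k+1}+(k-d)(t)_k$ together with the relation $k\, t^{n-1-k}(t)_k = t^{n-k}(t)_k - t^{n-k-1}(t)_{k+1}$. Applying the strengthened inductive hypothesis to write $\chi_{G\setminus v}(t)=\sum_{k\geq\omega(G\setminus v)}a_k\, t^{n-1-k}(t)_k$ with $a_k\geq 0$, and multiplying by $(t-d)$, every coefficient $\frac{k-d}{k}a_k$ and $\frac{d}{k}a_k$ produced is nonnegative, precisely because each index satisfies $k\geq\omega(G\setminus v)\geq d$. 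A short case distinction separating $\omega(G\setminus v)=d$ (so $\omega(G)=d+1$, and the potential $Q_d$-contribution vanishes because $k=d$ forces $\frac{k-d}{k}=0$) from $\omega(G\setminus v)>d$ (so $\omega(G)=\omega(G\setminus v)$) verifies that only basis elements $t^{n-j}(t)_j$ with $j\geq\omega(G)$ actually appear.

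The principal subtlety is that a naive induction with only nonnegativity of the $a_k$ is insufficient: if some $a_k$ with $k<d$ were positive, the factor $\frac{k-d}{k}$ would introduce a negative coefficient. The strengthening $a_k=0$ for $k<\omega(G\setminus v)$, coupled with the essentially free inequality $\omega(G\setminus v)\geq d$ coming from simpliciality of $v$, is exactly what makes the identity yield a nonnegative combination.
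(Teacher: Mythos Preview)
Your proof is correct and follows essentially the same approach as the paper: induct on $n$ via a simplicial vertex $v$ with $d$ neighbours, write $\chi_G=(t-d)\chi_{G\setminus v}$, and use the identity $(t-d)\,t^{n-1-k}(t)_k=\bigl(1-\tfrac{d}{k}\bigr)t^{n-k}(t)_k+\tfrac{d}{k}\,t^{n-1-k}(t)_{k+1}$, which yields nonnegative coefficients once one knows the $k<d$ terms vanish. The only difference is packaging: you carry $\omega(G)$ through a strengthened inductive hypothesis, whereas the paper avoids this by observing directly that $\chi_{G\setminus v}(m)=0$ for $0\le m\le d-1$ (since the $d$-clique on the neighbours of $v$ sits in $G\setminus v$) and then reading off $\theta_k=0$ for $k<d$ from the triangular evaluation of the basis $t^{n-1-k}(t)_k$ at small integers.
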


\begin{proof}
The proof is by induction on the number $n$ of vertices of $G$. The case when $n=1$ is clear. Suppose $n \geq 2$. By definition 
$G$ is obtained from a chordal graph $H$ on $n-1$ vertices by adding a new vertex $v$ whose neighbors $A$ form a clique. Hence 
$
\chi_G(t) = (t-a)\chi_H(t),
$
 where $|A|=a$. 
By induction, 
$$
\chi_H(t) = \sum_{k=1}^{n-1} \theta_k \cdot t^{n-1-k}(t)_{k}, \ \ \mbox{ where } \theta_k \geq 0 \mbox{ for all } k. 
$$
Since $\chi_H(k)=0$ for all $0 \leq k \leq a-1$, it follows that $\theta_k=0$ for all $0 \leq k \leq a-1$. Now 
$$
(t-a)t^{n-1-k}(t)_k= \left(1- \frac a k \right) t^{n-k}(t)_k + \frac a k t^{n-1-k}(t)_{k+1},
$$
which proves the lemma. 
\end{proof}

From Lemma~\ref{chordal-nonnegative} and the discussion above it we deduce:
\begin{corollary}\label{parpossh}
Let $P$ be a pure partition poset. If $P$ has a $\Pi$-shelling, then it has a nonnegative $h$-vector, and hence the chain polynomials of $P$ and its rank selected {subposet}s are real-rooted. 
\end{corollary}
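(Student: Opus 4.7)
The plan is to proceed by induction on the number $k$ of facets in the initial subcomplex $P_k = \langle F_1 \rangle \cup \cdots \cup \langle F_k \rangle$, showing at each step that $f_{P_k}(t)$ has a nonnegative expansion in the family $\{R_{n,j}(t)\}_{j=0}^n$. The base case $k=1$ is immediate: $\langle F_1 \rangle \cong \Pi'_{n+1}$ gives $f_{P_1}(t) = R_n(t) = R_{n,0}(t)$, corresponding to the $h$-vector $(1,0,\ldots,0)$.

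For the inductive step, I would use the decomposition
$$
f_{P_k}(t) = f_{P_{k-1}}(t) + \sigma_{G(F_k)}(t)/t
$$
recorded just before Definition \ref{partshelling}. By the inductive hypothesis, $f_{P_{k-1}}(t)$ already has a nonnegative expansion in the $R_{n,j}(t)$, so it suffices to establish one for $\sigma_{G(F_k)}(t)/t$. Combining Theorem \ref{partnk}, which gives $R_{n,j}(t) = \mathcal{S}(t^{n-j}(t)_{j+1})/t$, with $\sigma_G(t) = \mathcal{S}(\chi_G(t))$, this reduces to producing a nonnegative expansion of the chromatic polynomial $\chi_{G(F_k)}(t)$ in $\{t^{n-j}(t)_{j+1}\}_{j=0}^n$. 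This is exactly what Lemma \ref{chordal-nonnegative} delivers when applied to the chordal graph $G(F_k)$ on $n+1$ vertices (chordality comes from condition (C) of Definition \ref{partshelling}); after the reindexing $j = k-1$, its output takes the required form. Applying $\mathcal{S}$ and dividing by $t$ then produces the needed nonnegative expansion of $\sigma_{G(F_k)}(t)/t$, and combined with the inductive hypothesis this closes the induction.

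With the nonnegativity of the $h$-vector of $P = P_m$ in hand, $P$ is $\Pi'$-positive by definition, and Theorem \ref{rposp} immediately yields real-rootedness of its chain polynomial. To extend to rank-selected sub-posets, I would invoke Theorem \ref{rpossel}, which promotes $\Pi'$-positivity of $P$ to $\Pi'_S$-positivity of $P_S$, followed by a second application of Theorem \ref{rposp}. The only subtle point in the argument is the reindexing in the inductive step: Lemma \ref{chordal-nonnegative} is phrased for graphs on $n$ vertices with expansion terms $t^{n-k}(t)_k$, while the expansion needed here involves terms $t^{n-j}(t)_{j+1}$ for a graph on $n+1$ vertices; once this bookkeeping is done carefully, everything else is a direct assembly of results already established in earlier sections.
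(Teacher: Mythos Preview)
Your proposal is correct and follows exactly the argument the paper intends: the paper's proof is the one-line ``From Lemma \ref{chordal-nonnegative} and the discussion above it we deduce,'' and you have faithfully unpacked that discussion into the induction on shelling steps, the identification of the increment with $\sigma_{G(F_k)}(t)/t$, and the appeal to Theorems \ref{rposp} and \ref{rpossel} for the real-rootedness conclusions. The only cosmetic issue is the symbol clash between the shelling index $k$ and the summation index $k$ from Lemma \ref{chordal-nonnegative}, but your reindexing $j=k-1$ handles it correctly.
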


Define a linear order on the set $\Pi_{n}^k$ of partitions of $[n]$ into $k$ blocks as follows. For Definition \ref{partshelling} to make sense, $\langle \Pi_{n}^k \rangle$ (the order ideal of $\Pi_n'$ generated by the partitions in $\Pi_{n}^k $) should be $\Pi$-shellable for all $k \leq n$. We shall now prove this.   Order the blocks $v_1, \ldots, v_k$ of $\pi \in \Pi_{n}^k$ so that $\max(v_1) < \cdots < \max(v_k)$. Order the elements within the blocks in decreasing order, and consider the permutation $\tau(\pi)= v_1v_2 \cdots v_k$. If $\pi, \pi' \in \Pi_{n}^k$ we write $\pi <_\ell \pi'$ if $\tau(\pi)$ comes before $\tau(\pi')$ in the lexicographical ordering. Hence $<_\ell$ is a total order. 

For $\pi \in \Pi_{n}^k$, define a graph $G(\pi)=(V,E)$, where $V=\{v_1,\ldots, v_k\}$ {is the ordered set of} blocks of $\pi$,  and $E$ is the set of all $\{v_i,v_j\}$ for which there exists a partition $\pi' \in \Pi_{n}^k$ for which $\pi' <_\ell \pi$ and   $\pi \wedge \pi'$ is equal to the partition $\pi[v_i,v_j]$ obtained from $\pi$ by merging the blocks $v_i$ and $v_j$. 

\begin{lemma}\label{vivj}
Let $\pi \in \Pi_{n}^k$. Then $\{v_i,v_j\}$, where $i<j$,  is not an edge in $G(\pi)$ if and only if 
\begin{itemize}
\item[(a)] $|v_1| = \cdots =|v_i|=1$, and 
\item[(b)] if $i < m \leq k$, then $x<y$ for all $x \in v_i$ and $y \in v_m$. 
\end{itemize}
\end{lemma}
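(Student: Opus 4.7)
The plan is to analyze which $\pi' \in \Pi_n^k$ can satisfy $\pi \wedge \pi' = \pi[v_i,v_j]$ and then compare $\sigma(\pi')$ with $\sigma(\pi)$ lexicographically. The preliminary observation is that any such $\pi'$ must refine $\pi[v_i,v_j]$ and have exactly one more block than $\pi[v_i,v_j]$, so it is of one of two types: (A) keep all $v_m$ for $m \notin \{i,j\}$ and split $v_i \cup v_j$ into two parts $\{u_i', u_j'\} \neq \{v_i,v_j\}$; or (B) keep $v_i \cup v_j$ as a single block and split some $v_{m'}$, $m' \notin \{i,j\}$, into two. A short connected-components check confirms that both types indeed give $\pi \wedge \pi' = \pi[v_i,v_j]$.

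For the easy direction, I would assume (a) and (b) and first deduce $v_\ell = \{\ell\}$ for $\ell \leq i$: writing $y_\ell = \max(v_\ell)$, (a) gives $y_1 < \cdots < y_i$, (b) puts $y_i$ below every element of $v_{i+1} \cup \cdots \cup v_k$, and together these force $\{y_1,\ldots,y_i\} = \{1,\ldots,i\}$. Hence $\sigma(\pi)$ begins $1,2,\ldots,i$. For any $\pi'$ of type (A) or (B), the singletons $\{1\},\ldots,\{i-1\}$ persist, so $\sigma(\pi')$ agrees with $\sigma(\pi)$ on positions $1,\ldots,i-1$; and every remaining block of $\pi'$ has maximum at least $i+1$---in (A) because $u_i'$ must contain an element of $v_j \subseteq \{i+1,\ldots,n\}$ when the split differs from $\{v_i,v_j\}$, in (B) because the split block $v_{m'}$ is non-singleton and so has index strictly larger than $i$. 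This forces $\sigma(\pi')_i \geq i+1 > i = \sigma(\pi)_i$, proving $\pi' >_\ell \pi$ and hence that $\{v_i,v_j\}$ is not an edge.

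For the hard direction, I will exhibit an explicit $\pi' <_\ell \pi$ of one of the two types in each sub-case where (a) or (b) fails. If (a) fails, let $t \leq i$ be minimal with $|v_t| \geq 2$ and write $v_t = \{b_1 < \cdots < b_r\}$: when $t < i$ apply (B) with $m' = t$, splitting $v_t$ into $\{b_1\}$ and $v_t \setminus \{b_1\}$; when $t = i$ apply (A) with $u_i' = v_i \setminus \{b_r\}$ and $u_j' = v_j \cup \{b_r\}$. If (a) holds but (b) fails and $v_j$ contains an element $c < y_i$, apply (A) with $u_i' = \{c\}$ and $u_j' = (v_i \cup v_j) \setminus \{c\}$; otherwise the witness of (b) failing is some $v_{m^*}$ with $m^* \neq j$, and I apply (B) splitting off $\{d\}$ where $d = \min v_{m^*} < y_i$. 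Each construction changes the multiset of block maxes of $\pi$ by replacing a single value by a strictly smaller value $a \in \{b_1, b_{r-1}, c, d\}$ while preserving all others.

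The main obstacle will be pinpointing the first position at which $\sigma(\pi)$ and $\sigma(\pi')$ disagree. I plan a uniform argument by letting $s$ be the smallest index with $y_s > a$ (or $s = t$, resp.\ $s = i$, if $a$ exceeds all preserved singleton maxes). Then the first $s-1$ positions of both $\sigma(\pi')$ and $\sigma(\pi)$ equal the common singleton prefix $y_1,\ldots,y_{s-1}$, while $\sigma(\pi')_s = a$ and $\sigma(\pi)_s$ equals either $y_s$, $b_r$, or $y_i$ depending on the sub-case---each strictly greater than $a$---yielding $\pi' <_\ell \pi$ and completing the proof.
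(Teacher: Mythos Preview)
Your proof is correct and follows essentially the same approach as the paper: exhibit an explicit $\pi'<_\ell\pi$ of the right form when (a) or (b) fails, and verify the converse directly. Your version is more detailed---you explicitly classify all $\pi'$ with $\pi\wedge\pi'=\pi[v_i,v_j]$ into types (A) and (B), track the multiset of block maxima to locate the first lex disagreement, and actually argue the converse (which the paper simply asserts)---and your case split in the ``(a) holds, (b) fails'' situation separates $m^*=j$ from $m^*\neq j$, whereas the paper treats them uniformly; but the constructions themselves coincide.
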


\begin{proof}
We first prove that if (a) or (b) fails, then $\{v_i,v_j\}$ is an edge of $G(\pi)$. Suppose $|v_s|>1$ for some $s<i$ and consider the partition $\pi'$ obtained from $\pi$ by splitting $v_s$ into two blocks, and merging $v_i$ with $v_j$. Then $\pi' <_\ell \pi$ and $\pi \wedge \pi' = \pi[v_i,v_j]$. Hence $\{v_i,v_j\}$ is an edge. Similarly if $|v_i|>1$, then let $\pi'$ be the partition obtained from $\pi$ by moving the largest element of $v_i$ to $v_j$. Then $\pi' <_\ell \pi$ and $\pi \wedge \pi' = \pi[v_i,v_j]$. Hence $\{v_i,v_j\}$ is an edge.

Suppose (a) holds and (b) fails for some $m$. Let $\pi'$ be the partition obtained from $\pi$ by merging $v_i$ and $v_j$ and splitting $v_m$ into two blocks by making the smallest element of $v_m$ the only element in one of the blocks. Then $\pi' <_\ell \pi$ and $\pi \wedge \pi' = \pi[v_i,v_j]$. Hence $\{v_i,v_j\}$ is an edge.

Conversely if (a) and (b) hold, then $\{v_i,v_j\}$ is not an edge. 
\end{proof}

\begin{remark}\label{G-form}
Let $s$ be the maximal $i$ for which $v_i$ is not a vertex of an edge in $G(\pi)$. Then, by Lemma~\ref{vivj}, $v_j= \{j\}$ for all $1 \leq j \leq s$, and the graph $G(\pi)$ is isomorphic to $B_{k,k-s}$. 
\end{remark}

\begin{theorem}\label{pi-shell}
The order $<_\ell$ defines a $\Pi$-shelling of the facets of $\langle \Pi_{n}^k \rangle$. 
\end{theorem}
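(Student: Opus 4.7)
The plan is to verify conditions $(\mathrm{S}_1)$ and $(\mathrm{C})$ of Definition~\ref{partshelling} for the ordering $<_\ell$. Chordality $(\mathrm{C})$ is immediate from Remark~\ref{G-form}: $G(\pi_i)\cong B_{k,k-s}$ is a $(k-s)$-clique together with $s$ isolated vertices, and such a graph is trivially chordal. The work therefore lies entirely in verifying $(\mathrm{S}_1)$.

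First I will reformulate the purity condition. Since $\Pi'$ is a lattice, $\langle\pi_j\rangle\cap\langle\pi_i\rangle=\langle\pi_j\wedge\pi_i\rangle$, and the meet in the dual partition lattice coincides with the standard partition join. Hence $P_{i-1}\cap\langle\pi_i\rangle$ is pure of rank $k-2$ if and only if, for every $j<i$, $\pi_j\wedge\pi_i$ is dominated in $\Pi'$ by some $\pi_i[v_c,v_d]$ which is itself of the form $\pi_{j'}\wedge\pi_i$ for some $j'<i$. By the definition of $G(\pi_i)$, the latter is equivalent to $(v_c,v_d)$ being an edge of $G(\pi_i)$, i.e.\ to $c,d>s$ by Remark~\ref{G-form}. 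Thus the goal reduces to producing, for every $j<i$, indices $c,d>s$ such that $v_c$ and $v_d$ lie in the same block of $\pi_j\vee\pi_i$.

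The key claim I will establish is that $\pi_j<_\ell\pi_i$ forces the first $s$ blocks $w_1,\ldots,w_s$ of $\pi_j$ (in the max-ordering) to be exactly $\{1\},\{2\},\ldots,\{s\}$. The proof will go by induction on $p\leq s$: by Remark~\ref{G-form} the first $s$ entries of $\sigma(\pi_i)$ are $1,2,\ldots,s$, and granting the inductive hypothesis that $w_1,\ldots,w_{p-1}$ equal $\{1\},\ldots,\{p-1\}$, the $p$-th entry of $\sigma(\pi_j)$ equals $\max(w_p)\geq p$. If $\max(w_p)>p$, then $\sigma(\pi_j)$ and $\sigma(\pi_i)$ first disagree at position $p$ with $\sigma(\pi_j)_p>\sigma(\pi_i)_p$, contradicting $\pi_j<_\ell\pi_i$; hence $w_p=\{p\}$.

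With this claim in hand, $\pi_j$ and $\pi_i$ agree on $\{1,\ldots,s\}$, so their restrictions to $\{s+1,\ldots,n\}$ are two distinct partitions, each into $k-s$ blocks. Their standard join on $\{s+1,\ldots,n\}$ must then have strictly fewer than $k-s$ blocks, so at least two of $v_{s+1},\ldots,v_k$ merge into a single block of $\pi_j\vee\pi_i$; this yields the pair $(v_c,v_d)$ with $c,d>s$ needed for $(\mathrm{S}_1)$. I expect the inductive forcing of the leading singletons to be the main obstacle; once that step is secured, the remainder is a pigeonhole argument on partition joins combined with Remark~\ref{G-form}.
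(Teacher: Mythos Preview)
Your proposal is correct and follows essentially the same route as the paper's proof: both reduce $(\mathrm{S}_1)$ to showing that whenever $\pi'<_\ell\pi$ the singleton blocks $\{1\},\ldots,\{s\}$ of $\pi$ (with $s$ as in Remark~\ref{G-form}) are also blocks of $\pi'$, and then conclude that $\pi'\wedge\pi\leq\pi[v_c,v_d]$ for some $c,d>s$, which is an edge of $G(\pi)$ since $v_{s+1},\ldots,v_k$ form a clique. The paper simply asserts the singleton-forcing step, whereas you supply the inductive argument for it; your final pigeonhole on the join is a slightly more explicit version of the paper's one-line ``hence $\pi'\wedge\pi\leq\pi[v_s,v_t]$ for some $s>t>i$''.
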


\begin{proof}
%Consider $F_1 <_\ell F_2 <_\ell \cdots$, the total order on $\Pi_{n}^k$. 
We need to prove that if $\pi' <_\ell \pi \in \Pi_{n}^k$, then $\pi' \wedge \pi \leq \pi[v_s,v_t]$  for some edge $\{v_s,v_t\}$ in $G(\pi)$. If $G(\pi)$ is the complete graph, then there is nothing to prove. Otherwise $\pi$ satisfies (a) and (b) in Lemma~\ref{vivj} for some maximal $i \geq 1$. Since $\pi' <_\ell \pi$, the blocks $\{1\}, \ldots, \{i\}$ of $\pi$ are also blocks of $\pi'$.  Hence  $\pi' \wedge \pi \leq \pi[v_s,v_t]$ for some $s>t >i$. Since $v_{i+1}, \ldots, v_k$ form a clique in $G(\pi)$, the proof follows. 
     
\end{proof}

\begin{corollary}
 Let $0\leq k <n$. The $h$-vector of  $\langle \Pi_{n+1}^{k+1} \rangle$ is equal to $$\big( (i+1) S(n-k+i,i+1)\big)_{i=0}^k.$$  
\end{corollary}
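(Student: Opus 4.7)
The plan is to read off the $h$-vector directly from the lexicographic shelling $F_1 <_\ell F_2 <_\ell \cdots <_\ell F_m$ of $\langle \Pi_{n+1}^{k+1} \rangle$ established in Theorem~\ref{pi-shell}. By the discussion preceding Definition~\ref{partshelling}, if $P_j = \langle F_1 \rangle \cup \cdots \cup \langle F_j \rangle$, then $f_{P_1}(t) = R_{k,0}(t)$ and $f_{P_j}(t) - f_{P_{j-1}}(t) = \sigma_{G(F_j)}(t)/t$ for $j \geq 2$. For each $\pi \in \Pi_{n+1}^{k+1}$ with blocks $v_1, \ldots, v_{k+1}$ ordered by increasing maximum, let $\tau(\pi)$ denote the largest $s$ such that $v_1 = \{1\},\, v_2 = \{2\},\, \ldots,\, v_s = \{s\}$. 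A direct check using Lemma~\ref{vivj} shows that for $j \geq 2$ the parameter $s(F_j)$ from Remark~\ref{G-form} coincides with $\tau(F_j)$, so Remark~\ref{G-form} and Theorem~\ref{partnk} together give $\sigma_{G(F_j)}(t)/t = R_{k,\,k-\tau(F_j)}(t)$. The unique facet with $\tau = k$ is $F_1 = \{\{1\},\ldots,\{k\},\{k+1,\ldots,n+1\}\}$, and $f_{P_1}(t) = R_{k,0}(t) = R_{k,\,k-\tau(F_1)}(t)$ by inspection. Hence every facet contributes a single term $R_{k,\,k-\tau(F_j)}(t)$ to $f_P(t)$, and so
\[
h_i(\langle \Pi_{n+1}^{k+1} \rangle) = \bigl|\{ \pi \in \Pi_{n+1}^{k+1} : \tau(\pi) = k-i\}\bigr|.
\]

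Next I would count this set. The condition $\tau(\pi) = k-i$ forces the blocks $v_1,\ldots,v_{k-i}$ to be the singletons $\{1\},\ldots,\{k-i\}$ and requires $v_{k-i+1} \neq \{k-i+1\}$; equivalently, the remaining $i+1$ blocks form a partition of the $(n-k+i+1)$-element set $\{k-i+1,\ldots,n+1\}$ in which the smallest element $k-i+1$ is not alone in its block. Subtracting the partitions in which $\{k-i+1\}$ is a singleton block from the total count yields
\[
h_i = S(n-k+i+1,\,i+1) - S(n-k+i,\,i),
\]
which collapses to $(i+1)\,S(n-k+i,\,i+1)$ by the Stirling recurrence $S(m+1,j) = j\,S(m,j) + S(m,j-1)$.

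The one delicate step is the boundary case $j=1$: because $F_1$ is lex-minimal, its graph $G(F_1)$ is empty and the parameter $s(F_1)$ from Remark~\ref{G-form} equals $k+1$ rather than $k$, so the formula $R_{k,\,k-s(F_j)}(t)$ breaks down at $j=1$. However, the initialization $f_{P_1}(t) = R_{k,0}(t)$ already matches $R_{k,\,k-\tau(F_1)}(t)$, so replacing $s$ by $\tau$ absorbs $F_1$ into a uniform formula valid for all $j$. Once this small adjustment is in place, the proof reduces to the elementary Stirling-number identity above.
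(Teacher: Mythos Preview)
Your proof is correct and follows essentially the same route as the paper's: both read off $h_i$ from the lexicographic shelling of Theorem~\ref{pi-shell} via Remark~\ref{G-form}, interpret $h_i$ as the number of $\pi\in\Pi_{n+1}^{k+1}$ with $\{1\},\ldots,\{k-i\}$ as singleton blocks and $\{k-i+1\}$ not a block, and finish with the Stirling recurrence. Your treatment is in fact more careful than the paper's in one respect: you explicitly notice and patch the discrepancy $s(F_1)=k+1\neq k=\tau(F_1)$ at the lex-minimal facet, whereas the paper silently absorbs this into the assertion ``by Remark~\ref{G-form}''.
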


\begin{proof}
By Theorem~\ref{pi-shell},  the order $<_\ell$ defines a $\Pi$-shelling. By Remark \ref{G-form}, $h_{k-i}$ is equal to the number of partitions in $\Pi_{n+1}^{k+1}$ for which $\{j\}$ is a block of $\pi$ for all $1\leq j \leq i$, but $\{i+1\}$ is not a block. Hence 
$$
h_{k-i}= S(n-i+1, k-i+1)-S(n-i,k-i)= (k-i+1)S(n-i,k-i+1), 
$$
which proves the {statement}. 
\end{proof}

\noindent 
{\bf Acknowledgements.}  Thanks to an anonymous referee for several helpful suggestions that improved the paper greatly. PB is a Wallenberg Academy Scholar
  supported by the Knut and Alice Wallenberg Foundation, and the G\"oran Gustafsson foundation.

\bibliographystyle{abbrv}
\bibliography{referencess}

\begin{comment}

\end{comment}

\end{document}